\newtheorem{thrm}{Theorem}[section]
\newtheorem{lem}[thrm]{Lemma}
\newtheorem{prop}[thrm]{Proposition}
\newtheorem{cor}[thrm]{Corollary}
\theoremstyle{definition}
\newtheorem{remark}[thrm]{Remark}
\newtheorem{question}[thrm]{Question}
\numberwithin{equation}{section}
\newcommand{\Cs}{$\operatorname{C}^*$-algebra}
\newcommand{\cstar}{\operatorname{C}^*}
\newcommand{\K}{\operatorname{K}}
\newcommand{\KK}{\operatorname{KK}}
\newcommand{\UCT}{\operatorname{UCT}}
\newcommand{\AF}{\operatorname{AF}}
\newcommand{\Cy}{\operatorname{C}}
\newcommand{\supp}{\operatorname{supp}}
\newcommand{\N}{\mathbb N}
\newcommand{\Z}{\mathbb Z}
\newcommand{\R}{\mathbb R}
\newcommand{\C}{\mathbb C}
\newcommand{\cp}{{; cf.}}
\newcommand{\cM}{\mathcal M}
\newcommand{\cN}{\mathcal N}
\newcommand{\cB}{\mathcal B}
\newcommand{\cO}{\mathcal O}
\newcommand{\cP}{\mathcal P}
\newcommand{\elll}{l}
\author{George A. Elliott}
\address{
Department of Mathematics\\
University of Toronto\\
Toronto\\
Canada \ M5S 2E4}
\address{
The Fields Institute for Research in Mathematical Sciences\\
222 College Street\\
Second Floor\\
Toronto\\
Canada \ M5T 3J1}
\email{elliott@math.toronto.edu}
\author{Adam Sierakowski}
\address{
The Fields Institute for Research in Mathematical Sciences\\
222 College Street\\
Second Floor\\
Toronto\\
Canada \ M5T 3J1} 
\email{asierako@fields.utoronto.ca}
\thanks{This research was supported by NSERC}
\keywords{Crossed products, K-theory}
\subjclass{Primary 46L35, Secondary 46L80, 19K99}
\begin{document}

\title[K-theory of certain purely infinite crossed products]{K-theory of certain purely infinite crossed products}

\begin{abstract}
It was shown by R{\o}rdam and the second named author that a countable group $G$ admits an action on a compact space such that the crossed product is a Kirchberg algebra if, and only if, $G$ is exact and non-amenable. This construction allows a certain amount of choice. We show that different choices can lead to different algebras, at least with the free group.
\end{abstract}
\maketitle

\section{Introduction}
The special class of \Cs s, now called \emph{Kirchberg algebras}, that are purely infinite, simple, separable, and nuclear, are of particular interest because of the classification of them (by $\K$- or $\KK$-theory) obtained by Kirchberg and Phillips in the mid 1990’s; see \cite{Kir:fields, Phi:class}. Many of the naturally occurring examples of Kirchberg algebras arise from dynamical systems. The Cuntz algebra $\cO_n$, for example, is stably isomorphic to the crossed product of a stabilized UHF-algebra by an action of the group of integers that scales the trace; see \cite{Cuntz:On}. Crossed products have also been a rich source of examples which are problematic for the classification of \Cs s. One of the most famous examples is R{\o}rdam's simple, separable, nuclear crossed product in the $\UCT$ class which contains both an infinite and a non-zero finite projection, \cite{Ror}. 

Prompted by Choi's embedding of $\cstar_r(\Z_2 * \Z_3)$ into $\cO_2$ (\cite{Choi}), Archbold and Kumjian (independently) proved that there is an action of $\Z_2 * \Z_3$ on the Cantor set such that the corresponding crossed product \Cs{} is isomorphic to $\cO_2$ (see Introduction of \cite{Spi:free}). A number of other constructions of Kirchberg algebras arising as crossed products of abelian \Cs s by hyperbolic groups have appeared in the literature; see \cite{Spi:free, Spi:Fuchsian, LacaSpi:purelyinf, Del}. (Cf.~also \cite{Ell}, in which the group is not hyperbolic.)  

For many cases of interest it is well understood when a crossed product is a Kirchberg algebra. For example, for a countable group $G$ acting on the Cantor set $X$ the crossed product $\Cy(X)\rtimes_r G$ is a Kirchberg algebra precisely when the action is topologically free, amenable, and minimal, and the non-zero projections in $\Cy(X)$ are infinite; see Proposition~\ref{prop21} below.

Recently, it was shown in \cite{Sier:Ror} that a countable group $G$ admits an action on a compact space such that the crossed product is a Kirchberg algebra if, and only if, $G$ is exact and non-amenable. The referee for \cite{Sier:Ror} kindly suggested that it might be interesting to see which Kirchberg algebras can be obtained from a given group $G$. In this paper we give at least a partial answer to this question. 

Let us briefly recall the idea of the construction of \cite{Sier:Ror}, with the emphasis on the parts that make the $\K$-theory computation difficult. Let $G$ be a countable group and let $\beta G$ denote the Stone-Cech compactification of $G$. The strategy is to select a proper sub-\Cs{} $A\rtimes_r G$ of the Roe algebra $\Cy(\beta G) \rtimes_r G$ and then divide out by an ideal to make the algebra simple. The selection involves three main steps:
\begin{enumerate}[(i)]
	\item Recall that an action of $G$ on a totally disconnected compact Hausdorff space $X$ is free if, and only if, for each $e\neq t\in G$ there exists a finite partition $\{p_{i,t}\}_{i\in F}$ of $1$, such that $p_{i,t}\ \bot\ t.p_{i,t}$ (easy exercise). By \cite[Corollary 6.2]{Sier:Ror} such projections $\{p_{i,t}\}$ exist if $X=\beta G$.
	\item Recall that the action of $G$ on a compact Hausdorff space $X$ is said to be amenable if, and only if, for each ${i\in\N}$ there exists a family of positive elements $\{m_{i,t}\}_{t\in G}$ in $\Cy(X)$ such that $\sum_{t\in G}  m_{i,t} =1$ and $\underrightarrow{\lim}_i(\sup_{x\in \beta G}$ $\sum_{t\in G}|m_{i,st}(x)-s.m_{i,t}(x)|)=0$ (see Remark 2.6 following \cite[Definition 2.1]{Del2b}). By \cite[Theorem 4.5]{Del1} and \cite[Theorem 3]{Nar:Oza} such elements $\{m_{i,t}\}$ exist if $X=\beta G$ and $G$ is exact.
	\item Let $p$ be a projection in $\Cy(\beta G)$ not contained in a proper closed two-sided ideal of $\Cy(\beta G)\rtimes_r G$. If $G$ is non-amenable then $p$ is properly infinite; see \cite[Corollary 5.6]{Sier:Ror}. By a density argument there exists a sequence $\{h_{i,p}\}_{i\in \N}$ in $\Cy(\beta G)$ such that $p$ is properly infinite in $\cstar(\{h_{i,p}\})\rtimes_r G$; see  \cite[Lemma 6.6]{Sier:Ror}.
\end{enumerate}
By carefully selecting, according to this procedure, a countable subset of $\Cy(\beta G)$ (and dividing the resulting algebra by a maximal proper $G$-invariant closed two-sided ideal), a free, amenable, minimal action of $G$ on a Cantor set $X$ was obtained in \cite{Sier:Ror} such that every non-zero projection in $\Cy(X)$ is properly infinite. As mentioned in the Introduction (see~\ref{prop21} below) the \Cs{} $\Cy(X)\rtimes_r G$ is a Kirchberg algebra.

In Section~\ref{sec3} we consider the specific case that $G$ is the free group on two generators. We present two specific choices of maps in $\Cy(\beta G)$ such that the algebras they generate divided out by a $G$-invariant ideal give rise to two different crossed products. Perhaps surprisingly the ideal we use is the smallest $G$-invariant closed two-sided ideal in $\elll^\infty(G)\ (\cong \Cy(\beta G))$ containing the finitely supported projections, and the projections we use are only the projections corresponding to cylinder sets and---for the second construction---also the projections corresponding to certain countable unions of cylinder sets. Our main result is
\begin{thrm}\label{mainth0}
Let $G$ denote the free group on two generators. There exist two $G$-invariant sub-$C^*$-algebras of $C( {\beta G \setminus G} )$ such that the corresponding crossed products are different Kirchberg algebras with computable $K$-groups.
\end{thrm}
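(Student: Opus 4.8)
The plan is to realize both crossed products as $\Cy(X)\rtimes_r G$ for explicit Cantor systems and then to read off everything from Proposition~\ref{prop21} together with a Pimsner-type $\K$-theory computation. Write $G=F_2$ with free generators $a,b$, and for a reduced word $w$ let $Z_w\subseteq G$ be the cylinder set of all reduced words beginning with $w$; its characteristic function is a projection in $\elll^\infty(G)$, and I write $\bar Z_w$ for its image in $\elll^\infty(G)/c_0(G)\cong\Cy(\beta G\setminus G)$ (recall that the smallest $G$-invariant closed two-sided ideal generated by the finitely supported projections is exactly $c_0(G)$). Let $A_1\subseteq\Cy(\beta G\setminus G)$ be the smallest $G$-invariant sub-\Cs{} containing all $\bar Z_w$, and let $A_2$ be the smallest $G$-invariant sub-\Cs{} containing all $\bar Z_w$ together with the image $\bar U$ of a single, carefully chosen countable union $U=\bigsqcup_k Z_{w_k}$ of cylinder sets (and hence all of its $G$-translates). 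Both algebras are separable, commutative and unital, so $A_i=\Cy(X_i)$ for second-countable, compact, totally disconnected $G$-spaces $X_i$, and $A_1\subseteq A_2$ induces an equivariant surjection $X_2\twoheadrightarrow X_1$; here $X_1$ is canonically the Gromov boundary $\partial G$ with its boundary action.

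First I would verify the hypotheses of Proposition~\ref{prop21} for each system, so that $\Cy(X_i)\rtimes_r G$ is a Kirchberg algebra. \emph{Topological freeness} follows from step~(i): for each $e\neq t\in G$ the cylinder projections already provide a finite partition of $1$ into projections $p$ with $p\perp t.p$, a direct check on the $4$-regular tree, and these projections lie in $A_i$. \emph{Amenability} follows from step~(ii): $G=F_2$ is exact and the action on $\beta G$ is amenable, hence so is the factor action on each quotient $X_i$; in particular each crossed product is nuclear and lies in the $\UCT$ class. \emph{Proper infiniteness} of every non-zero projection follows from step~(iii) together with the non-amenability of $F_2$, and can in fact be seen directly from the paradoxical decomposition of each cylinder set under left translation. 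The point requiring genuine care is \emph{minimality} (no proper $G$-invariant closed subset of $X_i$), which is classical for $X_1=\partial G$ but must be checked for $X_2$: the set $U$ must be chosen so that its translates do not cut out an invariant subset, and so that $X_2$ stays infinite and perfect, hence a Cantor set.

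Next I would compute the $\K$-groups by feeding $A_i=\Cy(X_i)$ into the Pimsner exact sequence for $G=\Z*\Z$ acting through the generator automorphisms $\alpha_a,\alpha_b$:
\[
\K_0(A_i)^{2}\xrightarrow{\ \mu\ }\K_0(A_i)\to\K_0(A_i\rtimes_r G)\to\K_1(A_i)^{2}\xrightarrow{\ \mu\ }\K_1(A_i)\to\K_1(A_i\rtimes_r G)\to\K_0(A_i)^{2},
\]
with $\mu(x,y)=(\mathrm{id}-\alpha_{a*})x+(\mathrm{id}-\alpha_{b*})y$. Since $X_i$ is totally disconnected, $\K_0(A_i)=\Cy(X_i,\Z)$ and $\K_1(A_i)=0$, so $\K_0(A_i\rtimes_r G)=\operatorname{coker}\mu$ and $\K_1(A_i\rtimes_r G)=\ker\mu$, and the whole problem reduces to understanding $\Cy(X_i,\Z)$ as a $\Z[G]$-module generated by the classes $[\bar Z_w]$ (and $[\bar U]$). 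For construction~$1$ the cylinder relations $[\bar Z_w]=\sum_{x}[\bar Z_{wx}]$ make this the boundary module, and I expect $\K_0(A_1\rtimes_r G)\cong\Z^{2}\cong\K_1(A_1\rtimes_r G)$, matching the identification of $\Cy(\partial F_2)\rtimes_r F_2$ with (a full corner of) the Cuntz--Krieger algebra of the $4\times4$ matrix $J-P$, where $P$ is the inverse-pairing involution; one also records the class of the unit. For construction~$2$ the extra generator $[\bar U]$ and its translates enlarge the module and so alter $\ker\mu$ and $\operatorname{coker}\mu$, and the choice of $U$ is made precisely so that the resulting (still finitely generated, explicitly computable) $\K$-groups, or the class of the unit in $\K_0$, differ from those of construction~$1$.

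Finally, since both algebras are unital Kirchberg algebras in the $\UCT$ class, the Kirchberg--Phillips theorem classifies them by the triple $(\K_0,[1],\K_1)$; as the computed invariants differ, the two crossed products are non-isomorphic. The main obstacle is the second construction: choosing the countable union $U$ so that minimality, topological freeness and proper infiniteness all survive while the $\K$-theory (or the position of the unit) provably changes, and then carrying the Pimsner computation of $\ker\mu$ and $\operatorname{coker}\mu$ through the enlarged $\Z[G]$-module. Everything else is either quoted from Proposition~\ref{prop21} and steps~(i)--(iii) or is a routine, if lengthy, module computation.
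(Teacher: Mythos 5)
Your overall strategy---two $G$-invariant subalgebras of $\Cy(\beta G\setminus G)$, one generated by the cylinder projections (whose spectrum is indeed the boundary $\partial F_2$) and one by the cylinder projections plus extra projections coming from countable unions of cylinders, followed by Proposition~\ref{prop21} and the Pimsner--Voiculescu sequence with $\K_1(A_i)=0$, and Kirchberg--Phillips to distinguish the results---is exactly the paper's strategy, and your prediction $\K_0\cong\K_1\cong\Z^2$ for the first example matches Theorem~\ref{mainth1}. But there is a genuine gap: the entire content of the second half of the theorem is the explicit choice of the extra generators and the proof that the $\K$-groups actually change, and you leave precisely this as ``the main obstacle.'' The paper does not get away with a single union $U$ and its translates; it adjoins the whole family $\cM=\bigcup_{|t|=1}\cN_t$ of projections $1_{\cB(y_a a^{\N}b^{\N}\cdots)}$ indexed by arbitrary finite alternating patterns of powers (this family, unlike an ad hoc $U$, is closed under multiplication together with $\cN$, which is what makes the lifting result Lemma~\ref{lem11} and hence everything downstream work). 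Proper infiniteness of the new generators is not ``the same paradoxicality argument'' as for cylinders: Lemma~\ref{lem12} needs a separate disjoint-support construction to find $t$ with $t.p\lneqq(p-p_1)\cdots(p-p_n)$.

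More importantly, the two $\K$-computations for the second example rest on specific mechanisms you have not identified. The vanishing of $\K_0(B/I\rtimes_r G)$ comes from the swindle-type relation $b^{-1}.e=e+p$ with $e=1_{\cB(b^{\N}a^{\N}x)}$, valid for \emph{every} generator $p=1_{\cB(a^{\N}x)}$ of the enlarged algebra (Lemma~\ref{lem14}); this forces $[p]_0=0$ for all projections and hence $\K_0=0$, which is qualitatively different from ``the extra generator enlarges the module and so alters $\operatorname{coker}\mu$.'' The computation $\K_1\cong\Z^4$ requires the combinatorial classification (Lemma~\ref{lem16}) of all integer solutions of $p+q=a.p+b.q$ in $\Cy(\widehat{B/I},\Z)$: besides the constants one finds exactly two new solutions, the $a$-invariant projection $r=1_{\cB(b)\cup\cB(a^{\N}b)\cup\cB(a^{-1})\setminus\cB(a^{-\N}b^{-1})}$ and its $b$-analogue $r'$, giving $\ker\sigma\cong\Z^4$. (The analogous Lemma~\ref{lem7} for the first example, showing only constants solve the equation there, is already a nontrivial induction on word length.) Without an explicit choice of the extra projections and these two computations, the claim that the two crossed products are different Kirchberg algebras is not established.
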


Since different choices of projections in $\Cy(\beta G)$ lead to different crossed products one might ask if the constructions of \cite{Sier:Ror} can be carried out so as to result in a particular (i.e., identifiable) $\K_0$-group (or particular $\K_1$-group). In  Section~\ref{sec5} we give an affirmative answer to this question, at least for the free groups; see Corollary~\ref{cor1}.

\section{Notation and a preliminary result} \label{sec:notations}
All groups throughout this paper are equipped with the discrete topology. Given a \Cs{} dynamical system $(A,G)$ with $G$ a discrete group, let $A \rtimes_{r} G$ and $A\rtimes G $ denote the \emph{reduced} and the \emph{full crossed product} \Cs s, respectively. Consider the common subalgebra $\Cy_c(G,A)$ of both crossed products consisting of the finite sums $\sum_{t\in G} a_tu_t$, where $a_t \in A$ (only finitely many non-zero), and $t \mapsto u_t$, $t \in G$, is the canonical unitary
representation of $G$ that implements the action of $G$ on $A$. (If $A$ is unital, then each $u_t$ belongs to the crossed product, and in general $u_t$  belongs to the multiplier algebra of the crossed product.) We suppress the canonical inclusion map $A\to A\rtimes_r G $ and view $A$ as being a sub-\Cs{} of $A \rtimes_r G$. 

Recall that the action of $G$ on $\widehat{A}$ (the spectrum of $A$) is said to be \emph{minimal} if $A$ does not contain any non-trivial $G$-invariant closed two-sided ideals, \emph{topologically free} if for any $t_1, \dots, t_n\in G\setminus\{e\}$, the set $\bigcap_{i=1}^n \{x\in \widehat{A} : t_i.x\neq x \}$ is dense in $\widehat{A}$, and \emph{amenable} if there exists a net $(m_i)_{i\in I}$ of continuous maps $x\to m_i^x$ from $\widehat{A}$ to the space Prob$(G)$ such that $\underrightarrow{\lim}_i \|s.m_i^x-m_i^{s.x}\|_1=0$ uniformly on compact subsets of $\widehat{A}$\cp~\cite{Del2b, ArcSpi}. For an action of a discrete group $G$ on an abelian \Cs{} $A$ the crossed product is simple and nuclear if, and only if, the action is minimal, topologically free, and amenable\cp~the proof of Proposition~\ref{prop21}.

Let $a,b$ be positive elements of a \Cs{} $A$. Write $a\precsim b$ if there exists a sequence $(r_n)$ in $A$ such that $r_n^*br_n\to a$. More generally, for $a\in M_n(A)^+$ and $b\in M_m(A)^+$ write $a\precsim b$ if there exists a sequence $(r_n)$ in $M_{m,n}(A)$ with $r_n^*br_n\to a$. For $a\in M_n(A)$ and $b\in M_m(A)$ let $a\oplus b$ denote the element $\textrm{diag}(a,b)\in M_{n+m}(A)$.  

A positive element $a$ in a \Cs{} $A$ is said to be \emph{infinite} if there exists a non-zero positive element $b$ in $A$ such that $a\oplus b\precsim a$. If $a$ is non-zero and if $a\oplus a\precsim a$, then $a$ is said to be \emph{properly infinite}. This extends the usual concepts of infinite and properly infinite projections\cp~\cite[p.\ 642--643]{KirRor}.  

A \Cs{} $A$ is \emph{purely infinite} if there are no characters on $A$ and if for every pair of positive elements $a,b$ in $A$ such that $b$ belongs to the closed two-sided ideal in $A$ generated by $a$, one has that $b\precsim a$. Equivalently, a \Cs{} $A$ is purely infinite if every non-zero positive element $a$ in $A$ is properly infinite\cp~\cite[Theorem 4.16]{KirRor}. 

A \Cs{} $A$ has \emph{real rank zero} if the the set of self-adjoint elements with finite spectrum is dense in the set of all self-adjoint elements (\cite{BroPed:realrank}). Real rank zero is a non-commutative analogue of being totally disconnected (because an abelian \Cs{} $\Cy_0(X)$, where $X$ is a locally compact Hausdorff space, is of real rank zero if and only if $X$ is totally disconnected).

\begin{prop}\label{prop21} Let $G$ be a countable group acting on the Cantor set $X$. The crossed product $C(X)\rtimes_r G$ is a Kirchberg algebra precisely when the action is topologically free, amenable, and minimal, and the non-zero projections in $C(X)$ are infinite.
\end{prop}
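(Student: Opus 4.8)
The plan is to test the four defining properties of a Kirchberg algebra --- separable, nuclear, simple, and purely infinite --- one at a time against the four listed dynamical conditions, observing that three of the four correspondences are essentially standard, so that all of the genuine work is concentrated in the pure-infiniteness clause.

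Separability is independent of the dynamics: since $X$ is the Cantor set, $C(X)$ is separable, and as $G$ is countable the dense $*$-subalgebra $C_c(G,C(X))$ is countably generated, so $C(X)\rtimes_r G$ is separable. For the conjunction of simplicity and nuclearity I would invoke the equivalence already recorded in Section~\ref{sec:notations}: for a discrete group acting on an abelian \Cs{}, the reduced crossed product is simple and nuclear precisely when the action is minimal, topologically free, and amenable (simplicity coming from the ideal-intersection property of topologically free minimal actions, and nuclearity from the identification of amenability of the action with nuclearity of the crossed product). These remarks reduce the proposition to a single equivalence: assuming the action minimal, topologically free, and amenable, $C(X)\rtimes_r G$ is purely infinite if and only if every non-zero projection in $C(X)$ is infinite.

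The forward half of this equivalence is immediate from the definition of pure infiniteness recalled in the excerpt: a non-zero projection $p\in C(X)$ is a non-zero positive element of $C(X)\rtimes_r G$, so pure infiniteness makes $p$ properly infinite, hence infinite. For the reverse half I would route the argument through the dichotomy for simple algebras of real rank zero. The three ingredients are that $C(X)\rtimes_r G$ is simple (from minimality and topological freeness); that it contains an infinite projection, for example the unit $1_{C(X)}$, which is a non-zero projection of $C(X)$ and hence infinite by hypothesis, so the algebra is not stably finite; and that it has real rank zero. Granting these, the fact that a simple \Cs{} of real rank zero is either stably finite or purely infinite forces $C(X)\rtimes_r G$ to be purely infinite, and in particular every non-zero projection there --- a fortiori every non-zero projection of $C(X)$ --- is then automatically properly infinite, closing the loop between \emph{infinite} and \emph{properly infinite}.

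I expect the main obstacle to be establishing real rank zero of the crossed product, since it cannot be read off from real rank zero of the commutative algebra $C(X)$ alone, and must be obtained \emph{independently} rather than circularly from the pure infiniteness one is trying to prove. This is where I would bring in the structural analysis underlying \cite{Sier:Ror} and \cite{KirRor}: topological freeness presents $C(X)$ as a Cartan subalgebra carrying a faithful conditional expectation, minimality makes $C(X)$ separate the (trivial) ideal lattice of the crossed product, and amenability of the action forces $G$ to be exact. These are exactly the hypotheses under which the crossed-product comparison theory applies, allowing one to dominate an arbitrary non-zero positive element of $C(X)\rtimes_r G$, in the Cuntz order, by a clopen projection of $C(X)$ and thereby transfer the infiniteness hypothesis on those projections into real rank zero and pure infiniteness of the whole algebra. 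I would expect verifying this comparison step, rather than the bookkeeping around separability, nuclearity, and simplicity, to be the crux of the proof.
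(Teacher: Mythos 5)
Your overall architecture agrees with the paper's for three of the four clauses: separability is immediate, the equivalence of simplicity plus nuclearity with minimality, topological freeness, and amenability is exactly what the paper invokes from Section~\ref{sec:notations} (via \cite{Del1} and \cite{ArcSpi}), and the forward implication (purely infinite $\Rightarrow$ the non-zero projections of $C(X)$ are infinite) is the same one-line observation. The gap is in the remaining implication. You propose to deduce pure infiniteness from Zhang's dichotomy for simple $C^*$-algebras of real rank zero, but that dichotomy needs real rank zero of $C(X)\rtimes_r G$ as an \emph{input}, and you never establish it. Your final paragraph asserts that comparison theory will ``transfer the infiniteness hypothesis on those projections into real rank zero and pure infiniteness,'' but that is a statement of intent, not an argument; in this generality real rank zero of the crossed product is not available except as a \emph{consequence} of pure infiniteness plus simplicity (again by Zhang), which makes your proposed order of deduction circular. (Note also that ``dominate a positive element by a clopen projection'' points the wrong way: what is needed is a non-zero projection of $C(X)$ Cuntz-dominated \emph{by} the given positive element, not the reverse.)

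The paper closes this implication by citing the proof of \cite[Theorem 4.1, $(i)\Rightarrow(iii)$]{Sier:Ror} together with \cite[Remark 4.3.7]{Sier:phd}, whose mechanism bypasses real rank zero entirely: topological freeness together with the faithful conditional expectation of $C(X)\rtimes_r G$ onto $C(X)$ produces, for every non-zero positive $a$ in the crossed product, a non-zero projection $p\in C(X)$ with $p\precsim a$; by simplicity $p$ is full, and an infinite projection in a simple $C^*$-algebra is properly infinite, so every positive element, in particular $a$, satisfies $a\precsim p$; hence $a\oplus a\precsim p\oplus p\precsim p\precsim a$ and $a$ is properly infinite. This yields pure infiniteness directly, by the characterization recalled in Section~\ref{sec:notations}, and real rank zero then comes for free afterwards. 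To repair your write-up you should either prove real rank zero independently --- which would essentially require redoing this comparison argument anyway --- or drop the detour and run the comparison argument to its natural conclusion, which is pure infiniteness itself.
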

\begin{proof}
The crossed product is simple and nuclear if, and only if, the action is topologically free, amenable, and minimal (\cite[Theorem 4.5, Proposition 4.8]{Del1} and \cite[p.\ 124]{ArcSpi}). If the crossed product is purely infinite the non-zero projections in $A$ are infinite. The converse follows from the proof of \cite[Theorem 4.1, $(i)\Rightarrow(iii)$]{Sier:Ror} and \cite[Remark 4.3.7]{Sier:phd} under the assumption of simplicity and nuclearity.
\end{proof}

\section{Kirchberg algebras contained in $\Cy( {\beta G \setminus G} ) \rtimes_r G$}\label{sec3}
\emph{Throughout this section we let $G$ denote the free group on two generators $a,b$}. We will now present two actions of $G$ on the Cantor set such that the corresponding crossed products are different Kirchberg algebras.

Each element in $G$ can be written as a \emph{(reduced) word}, i.e., a sequence finite $z_1\dots z_n$ of letters in $\{a,b,a^{-1},b^{-1}\}$ such that ${z_iz_{i+1}\neq e}$ for $i=1\dots, n-1$. For each $t\in G$ let $\cB(t)$ denote the subset of $G$ consisting of all words starting with $t$ and $|t|$ the length (i.e.\ the number of letters) of the word $t$. 

\subsection{Example 1}
Denote by $A$ the (separable) $G$-invariant sub-\Cs\ of $\elll^\infty(G)$ generated by $\cN=\{1_{\cB(t)}\colon t\in G, t\neq e\}$ and consider the $G$-invariant closed two-sided ideal $I$ in $A$ generated by the projections with finite support. This makes sense since $a.1_{\cB(a^{-1})}=1_{\cB(b)}+1_{\cB(b^{-1})}+1_{\cB(a^{-1})}+1_{\{e\}}$ (in other words, these projections belong to $A$!).

\begin{thrm}\label{mainth1}
The crossed product $A/I\rtimes_r G$ is a Kirchberg algebra in the UCT class with the following K-groups:
$$K_0(A/I\rtimes_r G)\cong \Z^2, \ \ \ K_1(A/I\rtimes_r G)\cong \Z^2.$$
\end{thrm}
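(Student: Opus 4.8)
The plan is to identify the quotient $A/I$ concretely as the continuous functions on the space of ends of the tree, and then to recognise $A/I\rtimes_r G$ as a Cuntz--Krieger algebra whose $\K$-groups are computed from a $4\times 4$ integer matrix. The displayed identity already gives $1_{\{e\}}\in A$, and $G$-invariance then gives $1_{\{t\}}\in A$ for every $t$, so $c_0(G)\subseteq A$. First I would determine the spectrum of $A$ by Stone duality applied to the Boolean algebra generated by the cones $\cB(t)$ together with the singletons: the principal ultrafilters give $G$ as an open discrete set of isolated points, while each non-principal character is determined by a strictly decreasing chain $\cB(t_1)\supsetneq\cB(t_2)\supsetneq\cdots$, that is, by an infinite reduced word. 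Writing $\Omega$ for the Cantor set of infinite reduced words, the spectrum of $A$ is $G\sqcup\Omega$; the ideal $I$ generated by the finitely supported projections is precisely $c_0(G)=\{f\in A:f|_\Omega=0\}$, so $A/I\cong\Cy(\Omega)$, with the induced boundary action of $G$ and with $1_{\cB(t)}$ mapping to the indicator $1_{\Omega_t}$ of the cylinder set $\Omega_t$ of infinite words beginning with $t$. Thus $A/I\rtimes_r G=\Cy(\Omega)\rtimes_r G$ is the boundary crossed product of the free group.

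Next I would verify the hypotheses of Proposition~\ref{prop21}. Minimality holds since every $G$-orbit in $\Omega$ is dense; topological freeness holds since each $e\neq t\in G$ has infinite order and fixes only its two endpoints, so $\bigcap_i\{x:t_i.x\neq x\}$ is a dense open set; and amenability of the boundary action follows from exactness of $G$. Finally every non-zero projection of $\Cy(\Omega)$ is infinite, indeed properly infinite: the boundary action is paradoxical (for instance $a^{-1}.\Omega_a=\Omega\setminus\Omega_{a^{-1}}\supsetneq\Omega_a$ exhibits $1_{\Omega_a}$ as infinite, and the general statement is the pure infiniteness of $\Cy(\Omega)\rtimes_r G$ due to Laca and Spielberg). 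Proposition~\ref{prop21} then gives that $\Cy(\Omega)\rtimes_r G$ is a Kirchberg algebra; membership in the UCT class follows because $G$ has the Haagerup property (equivalently, because the algebra is a Cuntz--Krieger algebra, hence in the bootstrap class).

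It remains to compute the $\K$-groups. Here $\Omega$ is the one-sided subshift of finite type on the alphabet $\{a,b,a^{-1},b^{-1}\}$ with transition matrix $M$, where $M_{xy}=1$ if and only if $y\neq x^{-1}$; ordering the alphabet as $(a,b,a^{-1},b^{-1})$,
\[
M=\begin{pmatrix}1&1&0&1\\ 1&1&1&0\\ 0&1&1&1\\ 1&0&1&1\end{pmatrix}.
\]
The transformation groupoid $\Omega\rtimes G$ is equivalent to the shift groupoid of this subshift, so $\Cy(\Omega)\rtimes_r G$ is stably isomorphic to the Cuntz--Krieger algebra $\cO_M$. Since $M$ is symmetric, Cuntz's formula reads $\K_0(\cO_M)=\Z^4/(1-M)\Z^4$ and $\K_1(\cO_M)=\ker(1-M)$; as $1-M$ has rank two, a direct computation gives that both groups are free abelian of rank two, yielding $\K_0\cong\Z^2$ and $\K_1\cong\Z^2$. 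As an independent check one may run Pimsner's six-term sequence for the free-group crossed product, where $\K_0(\Cy(\Omega))=\Cy(\Omega,\Z)$, $\K_1(\Cy(\Omega))=0$, and the connecting map $(f,g)\mapsto(1-a_*)f+(1-b_*)g$ has, via the cylinder relations, the same kernel and cokernel. The main obstacle is this last step: establishing the groupoid equivalence with the shift groupoid (equivalently the Cuntz--Krieger identification) with the correct orientation, so that the computed kernel and cokernel are genuinely torsion-free; by contrast the identification of $A/I$ and the verification of the dynamical hypotheses are essentially classical facts about free-group boundary actions.
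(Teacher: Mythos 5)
Your proposal is correct and arrives at the stated $\K$-groups by a genuinely different route from the paper. You first identify $A/I$ concretely: the Stone space of the Boolean algebra generated by the cones and singletons is $G\sqcup\partial G$ with $\partial G$ the Cantor set of infinite reduced words, $I=c_0(G)$, and $A/I\cong \Cy(\partial G)$ with the boundary action; this identification is exactly right (and is nowhere made explicit in the paper). You then invoke Spielberg's realization of the boundary crossed product as a Cuntz--Krieger algebra and read off the $\K$-groups from $1-M$; in fact the isomorphism with $\cO_M$ can be made explicit and unital via the Cuntz--Krieger family $s_x=u_x(1-1_{\cB(x^{-1})})$, $x\in\{a,b,a^{-1},b^{-1}\}$, which satisfies $s_xs_x^*=1_{\cB(x)}$ and $s_x^*s_x=\sum_y M_{xy}s_ys_y^*$ and generates the crossed product since $u_x=s_x+s_{x^{-1}}^*$. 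The paper instead works entirely by hand with the cylinder projections inside $\elll^\infty(G)$: it proves lifting, infiniteness, topological freeness, minimality and amenability directly (Lemmas~\ref{lem1}--\ref{lem5}) and computes both $\K$-groups from the Pimsner--Voiculescu sequence, the $\K_1$ computation resting on the combinatorial Lemma~\ref{lem7} classifying solutions of $p+q=a.p+b.q$. Your route is shorter and packages simplicity, pure infiniteness, the UCT and the $\K$-theory into known facts about $\cO_M$; what the paper's hands-on route buys is that it is the template for Example~2, where the extra generators $\cM$ make $B/I$ strictly larger than $\Cy(\partial G)$, no Cuntz--Krieger model is available, and the Pimsner--Voiculescu/combinatorial analysis cannot be bypassed. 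Two small repairs to your write-up: amenability of the boundary action does not ``follow from exactness of $G$'' (amenability of the action on $\beta G$ does not pass to equivariant quotients or to factors in general), so either cite the classical amenability of the boundary action of a free group or use the explicit prefix-averaging elements as in Lemma~\ref{lem5}; and for the cokernel of $1-M$ to be torsion-free one should note that its two independent rows extend to a $\Z$-basis of $\Z^4$ (Smith normal form $\mathrm{diag}(1,1,0,0)$), which your rank count alone does not quite establish.
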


Before we give a proof of Theorem~\ref{mainth1} let us establish some preliminary results.

\begin{lem}\label{lem-1}
The $C^*$-algebras $A$ and $A/I$ are unital, separable, abelian AF-algebras of real rank zero. The spaces $\widehat{A}$ and $\widehat{A/I}$ are totally disconnected, compact metric spaces. If the action of $G$ on $\widehat{A/I}$ is minimal and topologically free then the space $\widehat{A/I}$ is a Cantor set.
\end{lem}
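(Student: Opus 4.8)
The plan is to establish the algebraic properties by presenting $A$ as an inductive limit of finite-dimensional algebras, to transfer these to $A/I$ by permanence of the AF property, and finally to identify $\widehat{A/I}$ through Brouwer's topological characterization of the Cantor set. First I would record that $A$ is abelian, being a sub-\Cs{} of the commutative algebra $\elll^\infty(G)$, and separable, being generated by the countable set $\cN$. To see that $A$ is unital, observe that the four cylinder sets $\cB(a),\cB(a^{-1}),\cB(b),\cB(b^{-1})$ together with $\{e\}$ partition $G$, so that $1=1_{\cB(a)}+1_{\cB(a^{-1})}+1_{\cB(b)}+1_{\cB(b^{-1})}+1_{\{e\}}$; the identity displayed before the lemma shows $1_{\{e\}}\in A$, whence $1\in A$. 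Enumerating $\cN=\{p_1,p_2,\dots\}$ and setting $A_n=\cstar(1,p_1,\dots,p_n)$, each $A_n$ is a finite-dimensional commutative algebra (the finitely many commuting projections generate a finite partition of unity), the $A_n$ increase, and $A=\overline{\bigcup_n A_n}$. Thus $A$ is an AF-algebra, and in particular has real rank zero.

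For the quotient I would invoke the standard fact that a quotient of a separable AF-algebra by a closed two-sided ideal is again AF; hence $A/I$ is a separable, abelian AF-algebra of real rank zero. It is moreover unital and non-zero: every generator of $I$ is finitely supported, so every element of $I$ is a norm-limit of finitely supported functions and therefore $I\subseteq c_0(G)$, giving $1\notin I$ and $I$ proper. Since $A$ and $A/I$ are unital, abelian and separable, their spectra $\widehat A$ and $\widehat{A/I}$ are compact and metrizable, and since both algebras have real rank zero these spectra are totally disconnected.

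It remains to identify $\widehat{A/I}$. By Brouwer's theorem a non-empty, compact, metrizable, totally disconnected space is a Cantor set exactly when it has no isolated points, so the task reduces to proving that, under the stated hypotheses, $\widehat{A/I}$ is perfect. I would argue by contradiction: if some $x\in\widehat{A/I}$ were isolated, then each $g.x$ would also be isolated, the orbit $G.x$ would be open, and its complement would be a closed $G$-invariant set, hence empty by minimality; so $G.x=\widehat{A/I}$ and every point would be isolated. A compact discrete space is finite, so $\widehat{A/I}$ would be finite. But on a finite discrete space topological freeness forces each $t\neq e$ to act without fixed points, i.e.\ the action to be free, which is impossible since the free group $G$ is infinite and cannot act freely on a finite set. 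This contradiction shows $\widehat{A/I}$ has no isolated points, and the Cantor set conclusion follows.

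The main obstacle is precisely this final step. The structural assertions are essentially bookkeeping together with the permanence of the AF property under quotients, and the topological features of the spectra follow formally from real rank zero, separability and unitality; it is only in excluding isolated points that minimality and topological freeness genuinely enter, through the observation that an isolated point in a minimal system collapses the space to a single orbit, and hence to a finite set incompatible with a free action of an infinite group.
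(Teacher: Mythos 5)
Your proof is correct and follows essentially the same route as the paper: the AF property comes from realizing the algebra as an increasing union of finite-dimensional subalgebras generated by finitely many commuting projections (the paper simply cites Bratteli, Proposition 1.6.10, for this), and the Cantor-set step is the identical isolated-point argument via minimality, compactness, and topological freeness. The additional details you supply (unitality of $A$, properness of $I$, permanence of the AF property under quotients) are accurate and merely fill in what the paper leaves implicit.
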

\begin{proof}
Any unital abelian \Cs{} $\Cy(X)$ generated by a sequence of projections is an $\AF$-algebra\cp~\cite[Proposition 1.6.10]{Brat}.

Suppose that $G$ acts minimally and topologically freely on $X$. If there were an isolated point in $X$, the orbit of this would have to be all of $X$ by minimality but by compactness it would have to be finite---which would contradict topological freeness.
\end{proof}

\begin{lem}\label{lem1}
Every projection in $A/I$ lifts to a finite sum of projections in $\cN$.
\end{lem}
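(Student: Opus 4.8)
The plan is to reduce the statement to the combinatorics of the cylinders $\cB(t)$, using that in the free group any two of them are nested or disjoint: since the prefixes of a fixed reduced word are totally ordered by length, a word lying in both $\cB(s)$ and $\cB(t)$ forces one of $s,t$ to be a prefix of the other, so $\cB(s)\cap\cB(t)$ is the smaller of the two (when comparable) and is empty otherwise. Let $\mathcal{C}$ be the set of subsets of $G$ that are a \emph{finite disjoint} union of cylinders $\cB(t)$ ($t\neq e$) and singletons. From the decompositions $\cB(t)=\{t\}\sqcup\bigsqcup_{x}\cB(tx)$ (over the three letters $x$ with $tx$ reduced) and $G=\{e\}\sqcup\cB(a)\sqcup\cB(b)\sqcup\cB(a^{-1})\sqcup\cB(b^{-1})$ I would first verify that $\mathcal{C}$ is a Boolean subalgebra of $2^{G}$: closure under intersection is immediate from the dichotomy, and closure under complementation follows by peeling a cylinder or singleton along the unique path to the root, which yields only finitely many sibling cylinders and singletons.

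The crux is to identify the projections of $A$ with $\{1_{E}:E\in\mathcal{C}\}$. Because $A$ is the smallest \emph{$G$-invariant} $C^{*}$-subalgebra containing $\cN$, I must check that $\mathcal{C}$ is $G$-invariant. It suffices to treat a single generator $x$, and a direct computation gives $x\cdot\cB(t)=\cB(xt)$ when $t$ does not begin with $x^{-1}$, $x\cdot\cB(x^{-1}t')=\cB(t')$ when $t'\neq e$, and $x\cdot\cB(x^{-1})=G\setminus\cB(x)$; each lands in $\mathcal{C}$, and since translation respects disjoint unions this yields $g\cdot\mathcal{C}=\mathcal{C}$ for every $g$. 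As all singletons lie in $A$ (being the $G$-orbit of $1_{\{e\}}$, which belongs to $A$ by the displayed relation $a\cdot 1_{\cB(a^{-1})}=1_{\cB(b)}+1_{\cB(b^{-1})}+1_{\cB(a^{-1})}+1_{\{e\}}$), the linear span of $\{1_{C}:C\in\mathcal{C}\}$ is a dense $*$-subalgebra of $A$. Given a projection $p=1_{E}\in A$, I would choose a real-valued $f=\sum_{j}c_{j}1_{C_{j}}$ in this span with $\|p-f\|<1/2$; computing the norm on the dense set $G\subseteq\widehat{A}$, the inequality forces $E=\{g\in G:f(g)>1/2\}$ exactly. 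Since $f$ is constant on the finitely many atoms of the Boolean algebra generated by the $C_{j}$, the set $E$ is a union of such atoms, hence lies in $\mathcal{C}$. This characterization is the step I expect to be most delicate, since it is where ``wild'' limit projections (such as the indicator of an infinite geodesic ray, which is \emph{not} in $A$) must be excluded.

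It then remains to descend to the quotient. Every finite $F\subseteq G$ gives a finitely supported projection $1_{F}\in I$, so $1_{F}$ dies under $\pi\colon A\to A/I$. Given a projection $q\in A/I$, I would first lift it to a projection in $A$: by Lemma~\ref{lem-1} the space $\widehat{A}$ is totally disconnected, compact and metric, hence zero-dimensional, so the two disjoint compact subsets of the closed set $\widehat{A/I}\subseteq\widehat{A}$ cut out by $q$ are separated by a clopen $W\subseteq\widehat{A}$, and $p:=1_{W}\in A$ satisfies $\pi(p)=q$. Writing $p=1_{E}$ with $E\in\mathcal{C}$, say $E=\bigsqcup_{i=1}^{m}\cB(t_{i})\sqcup F$ with the $\cB(t_{i})$ pairwise disjoint ($t_{i}\neq e$) and $F$ finite, we have $p=\sum_{i=1}^{m}1_{\cB(t_{i})}+1_{F}$ with $1_{F}\in I$; hence $q=\pi(p)=\pi\bigl(\sum_{i=1}^{m}1_{\cB(t_{i})}\bigr)$. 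As the $\cB(t_{i})$ are disjoint, $\sum_{i=1}^{m}1_{\cB(t_{i})}$ is a projection in $A$ and a finite sum of elements of $\cN$, which is the required lift.
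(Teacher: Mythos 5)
Your proof is correct and follows essentially the same route as the paper's: both lift the projection to $A$, approximate it within $1/2$ by a real linear combination of the generating projections, and pass to the atoms of the finite Boolean algebra these generate, which are finite disjoint unions of cylinders and singletons, so that discarding the finitely supported part modulo $I$ gives the result. The only cosmetic differences are that you package this as an explicit identification of the projection lattice of $A$ with the Boolean algebra $\mathcal{C}$ of such unions, and that you lift from $A/I$ by clopen separation in the totally disconnected space $\widehat{A}$ rather than by citing the Brown--Pedersen real-rank-zero lifting theorem.
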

\begin{proof}
Let $p$ be any non-zero projections in $A/I$. Set $\cN'=\cN\cup \{1_{\{t\}}\colon t\in G\}$. Since $A=\cstar(\cN')$ has real rank zero, $p$ lifts to a projection $q\in A$ such that $p=q+I$; see \cite[Theorem 3.14]{BroPed:realrank}. Since $\cN'$ is a countable set of projections we may denote them by $(p_i)_{i=1}^\infty$. By a density argument and by reindexing the $p_i$'s we can ensure that
$$\|q-\sum _{i=1}^m c_ip_i\|<1/2,$$
for some $m\in \N$ and $c_i\in \C\setminus \{0\}$. Since $q=q^*$ we may assume $c_i\in \R\setminus \{0\}$.

Let us now consider all the non-zero projections we obtain by multiplying $p_1$ or $(1-p_1)$ by $p_2$ or $(1-p_2)$, etc. Except for the projection $(1-p_1)\cdots (1-p_m)$ we denote these projections by $(r_i)_{i=1}^k$. Notice that the $r_i$'s are pairwise orthogonal (taking two different sequences starting with $p_1$ or $(1-p_1)$ followed by $p_2$ or $(1-p_2)$, etc.\ and multiplying them together gives zero). The support of the $r_i$'s equals the support of the $p_i$'s (every $r_i$ corresponds to some product with at least one $p_j$, giving $\supp(r_i)\subseteq \supp(p_j)$; conversely any $x\in \supp(p_j)$ will also be supported by some product of $p_i$'s or $1-p_i$'s, not all of then being $1-p_i$, and hence some element $r_i$, giving $x\in \supp(r_i)$). We conclude that 
$$\sum _{i=1}^m c_ip_i= \sum _{i=1}^k d_ir_i,$$
for some $d_i\in \R$, where each $d_i$ is selected in the canonical way. For clarity we show how to find the $d_i$'s. Fix $j\in \{1,\dots,k\}$. We have $r_j=(\prod_{i\in I_j}p_i)(\prod_{i\in \{1,\dots,m\}\setminus I_j}(1-p_i))$ for some non-empty set $I_j\subseteq \{1,\dots,m\}$. Fix $x\in \supp(r_j)$. We have that 
\begin{align*}
d_j&=\big(\sum _{i=1}^k d_ir_i\big)\big(x\big)=\big(\sum _{i=1}^m c_ip_i\big)\big(x\big)\\
&= \sum _{i\in I_j} c_ip_i(x)+\sum _{i\in \{1,\dots,m\}\setminus I_j} c_ip_i(x)= \sum _{i\in I_j} c_i.
\end{align*}	
We have now shown that
$$\|q-\sum _{i=1}^k d_ir_i\|<1/2, \ \ \ r_ir_j=0, \ \ \ i\neq j, \ \ \ d_i\in \R\setminus \{0\}.$$
Fix $j\in \{1,\dots,k\}$. We have $|q(x)-\sum _{i=1}^k d_ir_i(x)|<1/2$ for any $x\in G$. Using this inequality for $x\in \supp(r_j)$ we conclude that either $|0-d_j|<1/2$ or $|1-d_j|<1/2$. We can therefore define $e_j$ to be the choice of either $0$ or $1$ such that $|e_j-d_j|<1/2$. From
$$\|q-\sum _{i=1}^k e_ir_i\|\leq \|q-\sum _{i=1}^k d_ir_i\|+\|\sum _{i=1}^k (d_i-e_i)r_i\|<1,$$
we conclude that $q=\sum _{i=1}^k e_ir_i$\cp~\cite[p.\  23]{RorLarLau:k-theory}. Upon reindexing and changing $k$ we may assume that $q=\sum _{i=1}^k r_i$ for some $k\in \N$.
	
With $j\in \{1,\dots,k\}$ we have that $r_j=(\prod_{i\in I_j}p_i)(\prod_{i\in \{1,\dots,m\}\setminus I_j}(1-p_i))$ for some non-empty set $I_j\subseteq \{1,\dots,m\}$. Using that $\cN'\cup\{0\}$ is closed under multiplication and that $1-p_i$ is a sum of projections in $\cN'$ (since $1=\sum_{|t|=n} 1_{\cB(t)}+\sum_{|t|<n}1_{\{t\}}$ for each $n\in \N$) we see that $r_j$ is a finite sum of projections in $\cN'$. We conclude that $q$ is finite sum of projections in $\cN'$. By removing the finitely supported projections from this sum we obtain $q$ with the desired properties.
\end{proof}

\begin{lem}\label{lem2}
Every non-zero projection $p$ in $A/I$ is infinite in $A/I\rtimes_r G$. 
\end{lem}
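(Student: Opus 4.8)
The plan is to reduce, via Lemma~\ref{lem1}, to a projection whose lift is a finite disjoint union of cone indicators, and then to compress each cone into a proper subcone by a single group translation, assembling the translations into one partial isometry.

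First I would record the elementary fact about cones that drives everything. If $s$ is a reduced word with last letter $\ell$, then $s^{-1}.\cB(s)=G\setminus\cB(\ell^{-1})$, the set of reduced words not beginning with $\ell^{-1}$. Consequently, for any reduced word $s'$ also ending in $\ell$, the element $g=s's^{-1}$ satisfies $g.\cB(s)=s'.(G\setminus\cB(\ell^{-1}))=\cB(s')$, since prepending $s'$ to a word not starting with $\ell^{-1}$ causes no cancellation and produces exactly the words with prefix $s'$. I would apply this with $s'=s\ell$, which is reduced because the only forbidden continuation of $s$ is $\ell^{-1}\neq\ell$; this gives a proper inclusion $\cB(s\ell)\subsetneq\cB(s)$ with infinite complement, implemented by $g=s\ell s^{-1}$ via $g.\cB(s)=\cB(s\ell)$. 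Translating this into the crossed product, $v=u_g 1_{\cB(s)}$ is a partial isometry in $A\rtimes_r G$ with $v^*v=1_{\cB(s)}$ and $vv^*=u_g 1_{\cB(s)}u_g^*=1_{g.\cB(s)}=1_{\cB(s\ell)}$; here I use only that $A$ is unital (Lemma~\ref{lem-1}) and $G$-invariant, so that $u_g\in A\rtimes_r G$.

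For the main step, let $p\in A/I$ be non-zero. By Lemma~\ref{lem1}, $p$ lifts to $q=\sum_{j=1}^n 1_{\cB(t_j)}\in A$ with the cones $\cB(t_j)$ pairwise disjoint and each $t_j\neq e$; thus $q=1_S$ for $S=\bigsqcup_j\cB(t_j)$. Writing $\ell_j$ for the last letter of $t_j$, I set $g_j=t_j\ell_j t_j^{-1}$ and $v_j=u_{g_j}1_{\cB(t_j)}$. By the fact above each $v_j$ is a partial isometry with initial projection $1_{\cB(t_j)}$ and final projection $1_{\cB(t_j\ell_j)}$. The initial projections are pairwise orthogonal (disjoint cones), and so are the final projections (since $\cB(t_j\ell_j)\subseteq\cB(t_j)$). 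Hence $v=\sum_{j=1}^n v_j$ is a partial isometry with $v^*v=1_S$ and $vv^*=1_{S'}$, where $S'=\bigsqcup_j\cB(t_j\ell_j)\subsetneq S$. Since $S\setminus S'=\bigsqcup_j(\cB(t_j)\setminus\cB(t_j\ell_j))$ is infinite, and an indicator $1_T\in A$ lies in $I$ exactly when $T$ is finite (because $I=A\cap c_0(G)$, all point masses $1_{\{t\}}$ belonging to $A$), the projection $1_{S\setminus S'}$ is non-zero in $A/I$. Passing to $A/I\rtimes_r G$, the image of $v$ exhibits $p$ as Murray--von Neumann equivalent to the proper subprojection $[1_{S'}]<[1_S]=p$, so $p$ is infinite; equivalently $p\oplus[1_{S\setminus S'}]\precsim p$.

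The main obstacle is not any one computation but making the reduction airtight. One must be certain that the lift from Lemma~\ref{lem1} is genuinely a disjoint union of cones with no $e$-term, so that every $t_j$ has a last letter and every $\cB(t_j\ell_j)$ is a genuine proper subcone; and one must know that triviality in $A/I$ is detected by finiteness of support, i.e. that $I=A\cap c_0(G)$, so that $S\setminus S'$ being infinite certifies $[1_{S\setminus S'}]\neq 0$. Once these two points are secured, the partial-isometry bookkeeping (orthogonality of initial and final projections forcing the cross terms $v_i^*v_j$ to vanish for $i\neq j$) is routine.
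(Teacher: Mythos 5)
Your proof is correct and follows essentially the same route as the paper's: both reduce via Lemma~\ref{lem1} to cones $\cB(t)$ and compress each cone into a proper subcone by a group translation implemented by $u_g 1_{\cB(t)}$, the paper handling one cone at a time (using that orthogonal sums of infinite projections are infinite) where you assemble all the partial isometries at once. Your version merely makes explicit two points the paper leaves implicit, namely the choice $g=t\ell t^{-1}$ and the verification that the deficiency projection survives modulo $I$ because $I=A\cap c_0(G)$.
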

\begin{proof}
Using Lemma~\ref{lem1} write $p$ as 
$$p=\sum_{i=1}^n p_i +I, \ \ \ p_i\in \cN.$$
The sum of two orthogonal infinite projections is again an infinite projection (if $p=v^*v$, $vv^*\lneqq p$ and $q=w^*w$, $ww^*\lneqq q$ for two orthogonal projections $p,q$ then $p+q=u^*u$, $uu^*\lneqq p+q$ for $u:=v+w$). Hence we may assume that $p=1_{\cB(t)}+I$ for some $t\neq e$. Choose $s\in G$ such that $s.p\lneqq p$. Since
$$p=(u_s p)^*(u_s p), \ \ \ (u_s p)(u_s p)^*=s.p \lneqq p, \ \ \textrm{ and } \ \ u_s p \in A/I\rtimes_r G,$$
we have that $p$ is infinite.
\end{proof}

\begin{lem}\label{lem3}
The action of $G$ on $\widehat{A/I}$ is topologically free. 
\end{lem}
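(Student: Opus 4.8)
The plan is to make the spectrum $\widehat{A/I}$ completely explicit and then reduce topological freeness to a count of boundary fixed points. First I would identify the characters of $A/I$. A character of $A$ that annihilates $I$ is one vanishing on every point mass $1_{\{t\}}$, so it is determined by its values in $\{0,1\}$ on the generators $1_{\cB(t)}$. The relation $1_{\cB(t)}=1_{\{t\}}+\sum_{z}1_{\cB(tz)}$, where $z$ runs over the three letters with $tz$ reduced, together with $1=\sum_{|u|=1}1_{\cB(u)}+1_{\{e\}}$ and the orthogonality of the cylinders at each level, forces such a character to select exactly one length-one cylinder and then exactly one child cylinder at every subsequent level. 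This yields a unique infinite reduced word, and conversely each infinite reduced word determines such a character; since the cylinders separate distinct infinite words, $\widehat{A/I}$ is precisely the space of infinite reduced words over $\{a,b,a^{-1},b^{-1}\}$, and the $G$-action is left concatenation followed by reduction. From this description the space is compact, metrizable, and \emph{perfect}: any infinite word is a limit of distinct ones obtained by altering a far-out letter.

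Next I would reduce the claim to the fixed-point sets. For a single $t\neq e$ the set $\{x\in\widehat{A/I}:t.x=x\}$ is closed, so it suffices to show it has empty interior; because the space has no isolated points, any \emph{finite} fixed-point set is automatically nowhere dense. Given $t_1,\dots,t_n\in G\setminus\{e\}$, the set $\bigcap_{i=1}^n\{x:t_i.x\neq x\}$ is then the complement of the finite union $\bigcup_i \mathrm{Fix}(t_i)$ of nowhere dense closed sets, hence open and dense. Thus the full statement follows once each $\mathrm{Fix}(t)$ is shown to be finite.

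The heart of the argument is therefore to count the boundary fixed points of a nontrivial $t$. Writing $t=ghg^{-1}$ with $h$ cyclically reduced, the equation $t.\xi=\xi$ for an infinite reduced word $\xi$ forces $\xi$ to be an endpoint of the translation axis of $t$ in the Cayley tree of $G$, so that $\mathrm{Fix}(t)=\{gh^{\infty},\,gh^{-\infty}\}$, with $h^{\infty}=hhh\cdots$, consists of exactly two points. Equivalently, and in the spirit of the preceding lemmas, I would argue with projections: given any cylinder $\cB(w)$ one extends $w$ to a reduced word $w'$ with $|w'|>|t|$ whose reduced translate $(tw')$ is incomparable with $w'$ (neither a prefix of the other); then $t.1_{\cB(w')}=1_{\cB((tw'))}$ and $1_{\cB(w')}\cdot t.1_{\cB(w')}=0$ in $A/I$, so the clopen set of infinite words through $w'$ contains no $t$-fixed point. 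Since such cylinders form a basis for the topology, $\mathrm{Fix}(t)$ has empty interior directly.

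The main obstacle I anticipate is precisely this fixed-point computation: one must handle the cancellation that occurs when $t$ is not cyclically reduced and verify that no boundary point other than the two axis endpoints is fixed (equivalently, that a suitable incomparable extension $w'$ always exists for $t\neq e$). This is a standard fact about free groups acting on their Cayley trees, and its word-combinatorial bookkeeping is the only non-formal part of the proof; the spectrum identification and the Baire-category combination over $t_1,\dots,t_n$ are routine.
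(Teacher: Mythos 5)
Your argument is correct, but it takes a genuinely different and more geometric route than the paper. You first identify $\widehat{A/I}$ explicitly with the space of infinite reduced words (the Gromov boundary of $G$) carrying the left-translation action, and then invoke the classical fact that a nontrivial element $t=ghg^{-1}$ fixes exactly the two endpoints $gh^{\pm\infty}$ of its translation axis; together with perfectness of the boundary and the (finite-union) Baire observation, each $\mathrm{Fix}(t_i)$ is closed and nowhere dense and topological freeness follows. The paper never identifies the spectrum: it works entirely at the level of projections, reducing topological freeness to the claim that below every non-zero projection of $A/I$ there is a sub-projection $q$ with $t.q\neq q$, and it produces $q$ as a cylinder $1_{\cB(rs)}+I$ with $|rs|$ large compared with $|t|$ --- for such a $q$ the inequality $t.q\neq q$ is essentially automatic, since $t\cdot rs\neq rs$ as group elements and $|rs|>|t|$ prevents total cancellation, so $t.\cB(rs)$ is a different cylinder. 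Note that this is strictly weaker than the orthogonality $q\,(t.q)=0$ demanded in your projection variant: there one must steer the extension $w'$ away from the axis of $t$ (for $t=a$ and $w'=a^{k}$ the words $w'$ and $tw'$ are comparable and orthogonality fails, even though $t.q\neq q$ still holds), and this is precisely the word-combinatorial bookkeeping you flag at the end; it, like the two-fixed-point classification, is standard but is exactly what the paper's weaker criterion lets one avoid. What your route buys is a sharper picture --- the action is free off two points per group element, with North--South dynamics, and one gets a concrete model of the Cantor set promised by Lemma~\ref{lem-1} --- whereas the paper's route is shorter and, more importantly, transfers verbatim to Example 2 (Lemma~\ref{lem13}), where the extra generators in $\cM$ make $\widehat{B/I}$ a proper extension of the boundary and the explicit identification would have to be redone.
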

\begin{proof}
By Lemma~\ref{lem-1}, $X=\widehat{A/I}$ is a totally disconnected compact Hausdorff space.
	
It is easy to show that the action of $G$ on $X$ is topologically free if for any non-zero projection $p\in \Cy(X)$ and $t\neq e$ there exists a projection $q\leq p$ such that $t.q\neq q$. (Since $X$ is a Hausdorff space it is enough to show that for $t\neq e$ the set $\{x\in X : t.x\neq x \}$ is dense in $X$. Let $U$ be a neighbourhood of $y\in X$. Using that $X$ is totally disconnected find a projection $p$ such that $\supp(p) \subseteq U$. By assumption there exists a clopen set $V\subseteq \supp(p)$ such that $t.V\neq V$. In particular, there is $x\in V$ such that $t.x\neq x$. We conclude that $U\cap \{x\in X : t.x\neq x \}\neq\textit{{\O}}$.)

Fix a non-zero projection $p\in A/I$ and $t\neq e$. By Lemma~\ref{lem1} there exists a projection $1_{\cB(s)}\in \cN$ such that $1_{\cB(s)}+I\leq p$ for some $s\neq e$. Find $r\in G$ such that 
$$|r|>\max(|t|,|s|),\ \ \  r_1\dots r_{|s|}=s,\ \ \ r_{|r|}s_1\neq e.$$
With $q=1_{\cB(rs)}+I$ it follows that $t.q=tr.1_{\cB(s)} + I \neq r.1_{\cB(s)} + I=q\leq p$. We conclude that the action of $G$ on $\widehat{A/I}$ is topologically free.
\end{proof}

\begin{lem}\label{lem4}
The action of $G$ on $\widehat{A/I}$ is minimal. 
\end{lem}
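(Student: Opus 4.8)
The plan is to prove minimality by showing that $A/I$ has no non-trivial $G$-invariant closed two-sided ideal. So I would let $J$ be such an ideal with $J\neq 0$ and aim to deduce $J=A/I$. Since $A/I$ is an abelian \Cs{} of real rank zero by Lemma~\ref{lem-1}, the non-zero ideal $J$ contains a non-zero projection, and by Lemma~\ref{lem1} this projection is a finite sum of cylinder projections modulo $I$; in particular it dominates $1_{\cB(s)}+I$ for some $s\neq e$. It therefore suffices to prove the following: if a $G$-invariant closed two-sided ideal $J$ contains $1_{\cB(s)}+I$ for some $s\neq e$, then $1\in J$.

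The first reduction is to a single generator. Writing $s=z_1\cdots z_n$ with the $z_i$ letters in $\{a,b,a^{-1},b^{-1}\}$, a direct inspection of the left-translation action on $\elll^\infty(G)$ gives $z_1^{-1}.1_{\cB(z_1z_2\cdots z_n)}=1_{\cB(z_2\cdots z_n)}$ for $n\geq 2$, the hypothesis that the word is reduced ruling out any cancellation past the first letter. Iterating $n-1$ times yields $(z_{n-1}^{-1}\cdots z_1^{-1}).1_{\cB(s)}=1_{\cB(z_n)}$, so by $G$-invariance $J$ contains $1_{\cB(c)}+I$ for the single generator $c=z_n$.

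The heart of the matter is then to cover the unit by finitely many $G$-translates of one cylinder. Working modulo $I$, so that each $1_{\{t\}}$ vanishes, I would record the two cancellation identities
$$a^{-1}.1_{\cB(a)}=1_{\cB(a)}+1_{\cB(b)}+1_{\cB(b^{-1})},\qquad b.1_{\cB(b^{-1})}=1_{\cB(a)}+1_{\cB(a^{-1})}+1_{\cB(b^{-1})},$$
each obtained exactly as the sample identity $a.1_{\cB(a^{-1})}=1_{\cB(b)}+1_{\cB(b^{-1})}+1_{\cB(a^{-1})}+1_{\{e\}}$ displayed before Theorem~\ref{mainth1}. Consequently $ba^{-1}.1_{\cB(a)}=b.(1_{\cB(a)}+1_{\cB(b)}+1_{\cB(b^{-1})})\geq b.1_{\cB(b^{-1})}\geq 1_{\cB(a^{-1})}$. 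Taking the supremum of the three projections $1_{\cB(a)}$, $a^{-1}.1_{\cB(a)}$ and $ba^{-1}.1_{\cB(a)}$, all of which lie in $J$ when $c=a$, produces a projection dominating $1_{\cB(a)}+1_{\cB(b)}+1_{\cB(b^{-1})}+1_{\cB(a^{-1})}=1$ in $A/I$. Hence $1\in J$ and $J=A/I$. Since the four generators $a,b,a^{-1},b^{-1}$ are interchanged by automorphisms of $G$, which carry cylinder sets to cylinder sets and intertwine left translation, the same cover exists for every choice of $c$, completing the argument.

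The step I expect to be the main obstacle is the choice of covering translates. The naive guess $1_{\cB(a)}\vee a^{-1}.1_{\cB(a)}$ already fills in $\cB(b)$ and $\cB(b^{-1})$ but misses $\cB(a^{-1})$ completely, and because $\cB(a^{-1})$ reproduces the entire self-similar structure one cannot reach it using powers of $a$ alone. The decisive idea is to exploit cancellation a second time through the translate $ba^{-1}$, whose action on $1_{\cB(b^{-1})}$ delivers $1_{\cB(a^{-1})}$; carefully verifying these cancellation identities, and confirming that the resulting supremum equals the unit modulo the finitely supported projections, is where the real content lies.
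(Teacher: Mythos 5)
Your proof is correct, and it rests on the same underlying fact as the paper's --- finitely many $G$-translates of a cylinder set cover the unit modulo $I$ --- but the route is organized differently. The paper first reduces minimality to the criterion that for every non-zero projection $p$ there exist $s,t\in G$ with $s.p+t.p\geq 1$, and then verifies this with only \emph{two} translates: writing $1_{\cB(r)}+I\leq p$ with $r$ ending in $a$, it picks $s$ so that $s.1_{\cB(r)}=1_{\cB(a)}$ and $t$ so that $t.1_{\cB(rba^{-1})}=1-1_{\cB(a)}$ modulo $I$, exploiting the sub-cylinder $\cB(rba^{-1})\subseteq\cB(r)$ rather than ever reducing to a single letter. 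You instead argue directly with a non-zero $G$-invariant ideal $J$, collapse the cylinder to one letter via $z_1^{-1}.1_{\cB(z_1\cdots z_n)}=1_{\cB(z_2\cdots z_n)}$, and then cover $1$ by the three translates $1_{\cB(a)}$, $a^{-1}.1_{\cB(a)}$, $ba^{-1}.1_{\cB(a)}$; your cancellation identities are correct modulo $I$, the supremum $p_1\vee p_2\vee p_3=p_1+(1-p_1)p_2+(1-p_1)(1-p_2)p_3$ does lie in $J$ and equals $1$ there, and the symmetry reduction to $c=a$ is legitimate (one could also pass from $1_{\cB(c)}\in J$ to $1_{\cB(a)}\in J$ by one or two further explicit translations, avoiding automorphisms altogether). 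What the paper's version buys is economy --- two translates applied directly to $p$, in exactly the form needed for the clopen-cover criterion; what yours buys is that it stays entirely at the level of ideals and projections in $A/I$ and makes explicit the self-similarity obstruction that powers of $a$ alone never reach $\cB(a^{-1})$, which is indeed the crux of choosing the translates.
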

\begin{proof}
Recall that by Lemma~\ref{lem-1} $X=\widehat{A/I}$ is a totally disconnected compact Hausdorff space.

It is easy to show that the action of $G$ on $X$ is minimal if for any non-zero projection $p\in \Cy(X)$ there exist $s,t\in G$ such that $s.p+t.p\geq 1$. (Suppose that the action fails to be minimal. Then one can find $x \in X$ and a non-empty clopen set $U$ in the complement of the orbit $\cO_x:=G.x$. By assumption there exist $s,t\in G$ such that $s.U\cup t.U=X$. Then $\cO_x$ must intersect $s.U$ or $t.U$ or both. By $G$-invariance the orbit $\cO_x$ must intersect $U$; contrary to the choice of $U$.)

Fix a non-zero projection $p\in A/I$. By Lemma~\ref{lem1} there exists a projection $1_{\cB(r)}\in \cN$ such that $1_{\cB(r)}+I\leq p$ for some $r\neq e$. By symmetry we may assume that $r$ ends with $a$. With $s=r_{|r|-1}^{-1}\dots r_1^{-1}$ and $t=ab^{-1}a^{-1}s$ we obtain that
\begin{align*}
s.p+t.p &\geq s.1_{\cB(r)}+t.1_{\cB(rba^{-1})}+I\\
 &= 1_{\cB(a)}+ab^{-1}a^{-1}s.1_{\cB(s^{-1}aba^{-1})}+I=1.
\end{align*}
We conclude that the action of $G$ on $\widehat{A/I}$ is minimal.
\end{proof}

\begin{lem}\label{lem5}
The action of $G$ on $\widehat{A/I}$ is amenable. 
\end{lem}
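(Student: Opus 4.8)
The plan is to exhibit explicit almost-invariant probability fields lying inside $A/I$, exploiting the tree structure of the Cayley graph of the free group. For each $n\in\N$ define $m_{n,t}\in A$ by $m_{n,t}=\tfrac1n 1_{\cB(t)}$ when $0\le |t|\le n-1$ (so that $m_{n,e}=\tfrac1n 1$, since $\cB(e)=G$) and $m_{n,t}=0$ when $|t|\ge n$, and pass to the images $\overline m_{n,t}$ in $A/I$. Geometrically, for $w\in G$ of length at least $n-1$ the measure $\sum_t m_{n,t}(w)\,\delta_t$ is the uniform probability measure on the first $n$ vertices of the geodesic from $e$ to $w$; this is exactly the standard construction witnessing amenability of the boundary of a tree, realised here with coefficients among the cylinder projections actually available in $A/I$.

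The first thing to verify is that $\chi\mapsto m_n^\chi:=\sum_t\overline m_{n,t}(\chi)\,\delta_t$ is a continuous map from $\widehat{A/I}$ to $\operatorname{Prob}(G)$. Positivity is clear. For the normalisation I would group by length and use the relation $1=\sum_{|t|=k}1_{\cB(t)}+\sum_{|w|<k}1_{\{w\}}$ recorded in the proof of Lemma~\ref{lem1}: the finitely supported terms lie in $I$, so $\sum_{|t|=k}1_{\cB(t)}=1$ in $A/I$ for every $k$, whence $\sum_t\overline m_{n,t}=\tfrac1n\sum_{k=0}^{n-1}1=1$. Thus each $m_n^\chi$ is genuinely a probability measure, and continuity of $\chi\mapsto m_n^\chi$ is automatic since the $\overline m_{n,t}$ are finitely many elements of $A/I=\Cy(\widehat{A/I})$.

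The substance is the almost-invariance estimate $\sup_{\chi}\sum_t|\overline m_{n,st}(\chi)-s.\overline m_{n,t}(\chi)|\to 0$ as $n\to\infty$, for each fixed $s\in G$ (hence, by diagonalising, uniformly over finite subsets of $G$). Since every $1_{\{w\}}$ lies in $A$, the ideal $I$ coincides with $A\cap c_0(G)$, and consequently the quotient norm of any $a\in A$ equals $\limsup_{|w|\to\infty}|a(w)|$; so it suffices to bound $\sum_t|m_{n,st}(w)-s.m_{n,t}(w)|$ for $w\in G$ of large length. For such $w$ this quantity is precisely the $\elll^1$-distance between the uniform measure on the first $n$ vertices of the geodesic $[e,w]$ and the $s$-translate of the corresponding measure for $s^{-1}w$, i.e. the uniform measure on the first $n$ vertices of the geodesic $[s,w]$. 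Here the tree structure does the work: the geodesics $[e,w]$ and $[s,w]$ terminate at the common vertex $w$ and merge at the median of $e,s,w$ after at most $|s|$ steps, so their initial $n$-vertex segments differ by at most a bounded multiple of $|s|$ vertices, giving an $\elll^1$-distance of order $|s|/n$ uniformly in $w$. Letting $|w|\to\infty$ and then $n\to\infty$ completes the argument.

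The main obstacle is this last uniform geometric estimate, which is exactly the point where the free-group (tree) structure is essential; everything preceding it is bookkeeping with the partition relation modulo $I$. As an alternative route, one could first identify $\widehat{A/I}$ $G$-equivariantly with the Gromov boundary of $G$ — each character corresponds to an infinite reduced word through its values on the $1_{\cB(t)}$, since for every length $k$ exactly one projection $1_{\cB(t)}$ with $|t|=k$ is sent to $1$ — and then invoke the known amenability of that boundary action; the explicit fields above are simply the restriction of that argument to the coefficients we have at hand.
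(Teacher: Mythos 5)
Your proof is correct and is essentially the paper's own construction: your $m_{n,t}=\tfrac1n 1_{\cB(t)}$ for $|t|<n$ are exactly the squares of the coefficients of the test elements $T_n=\sum_{|t|<n}(\tfrac{1}{\sqrt{n}}1_{\cB(t)}+I)u_t$ used in the paper, i.e.\ the uniform measures on initial geodesic segments, normalised via the same relation $\sum_{|t|=k}1_{\cB(t)}=1$ modulo $I$. The only difference is one of formalism: the paper verifies the Brown--Ozawa $\|\cdot\|_2$-characterisation and gets $\|u_s\cdot T_n-T_n\|_2^2\leq 2|s|/n$ by an algebraic recursion, whereas you verify the $\operatorname{Prob}(G)$-valued definition directly and obtain the equivalent $O(|s|/n)$ bound from the tree geometry together with the (correct) identification of the quotient norm on $A/I$ with $\limsup_{|w|\to\infty}$.
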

\begin{proof}
Following \cite[Section 4.3]{BroOza} define the $\|\cdot\|_2$ norm on the convolution algebra $\Cy_c(G, A/I)$ by
$$\|S\|_2=\|\langle S,S\rangle\|^{1/2},$$
where $\langle S,T\rangle =\sum_{s\in G} a_s^*b_s$ for $S=\sum_{s\in G} a_su_s$ and $T=\sum_{t\in G}  b_tu_t$ in $\Cy_c(G,A/I)$. By \cite[Definition 4.3.1, Lemma 4.3.7]{BroOza}, the action is amenable if, and only if, there exists a sequence $T_i\in \Cy_c(G,A/I)$ such that $0\leq T_i(t)$, $\langle T_i, T_i\rangle = 1$ and $\|u_s\cdot T_i-T_i\|_2\to 0$. For each $i\in \N$ define $$T_i=\sum_{|t|<i}(\frac{1}{\sqrt{i}}1_{\cB(t)}+I)u_t.$$
We trivially have that $T_i\in \Cy_c(G,A/I)$ and $0\leq T_i(t)$. Using the equality $\sum_{|t|=n}$ $1_{\cB(t)}+I=1$ for $n\in \N$ we get that $\langle T_i, T_i\rangle=\sum_{t\in G}T_i(t)^2=1$. With $\langle u_s\cdot T_i, u_s\cdot T_i\rangle =1$ and $0\leq T_i\cdot T^*_i(s)= \langle T_i, u_s\cdot T_i\rangle$ we obtain that
\begin{align*}
\|u_s\cdot T_i-T_i\|_2^2	&= \|2-\langle T_i, u_s\cdot T_i\rangle-\langle u_s\cdot T_i, T_i\rangle\|\\
&= 2\|1- T_i\cdot T^*_i(s)\|.
\end{align*}
Using that $(n+1) T_{n+1}\cdot T^*_{n+1}(s)- n\ T_{n}\cdot T^*_{n}(s)=1$ for any $n\in \N$ and $s\in G$ with $|s|\leq n$ (this is easy to verify) we get that (for $|s|\leq i$)
\begin{align*}
\|1- T_i\cdot T^*_i(s)\|	&=  \|1-\frac{1}{i}((i-|s|)1+ |s|\ T_{|s|}\cdot T^*_{|s|}(s))\|\\
&= \frac{|s|}{i}\|1- \ T_{|s|}\cdot T^*_{|s|}(s)\|.
\end{align*}
Since $\|\langle T_i, u_s\cdot T_i\rangle\|\leq \|T_i\|_2\|u_s\cdot T_i\|_2 \leq 1$ (\cite[Section 4.3]{BroOza}), we have $0\leq T_{|s|}\cdot T^*_{|s|}(s)\leq 1$. We conclude that $\|u_s\cdot T_i-T_i\|_2^2\leq \frac{2|s|}{i}$. It follows that the action of $G$ on $\widehat{A/I}$ is amenable.
\end{proof}

\begin{lem}\label{lem6}
The $K_0$-group of $A/I\rtimes_r G$ is $\Z^2$.
\end{lem}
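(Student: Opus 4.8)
The plan is to compute $\K_0(A/I\rtimes_r G)$ by feeding the $G$-module $\K_*(A/I)$ into the Pimsner--Voiculescu type six-term exact sequence for a crossed product by the free group $G=F_2$. Since $A/I$ is an $\AF$-algebra (Lemma~\ref{lem-1}), $\K_1(A/I)=0$ and $\K_0(A/I)$ is the dimension group $\Cy(\widehat{A/I},\Z)$ of locally constant integer-valued functions, generated by the classes $[1_{\cB(t)}+I]$ subject only to the refinement relations $[1_{\cB(t)}+I]=\sum_z[1_{\cB(tz)}+I]$, the sum running over the three letters $z$ for which $tz$ is reduced. With $\K_1(A/I)=0$ the sequence for $G$ collapses to
\begin{equation*}
\bigoplus_{g\in\{a,b\}}\K_0(A/I)\xrightarrow{\ \partial\ }\K_0(A/I)\to \K_0(A/I\rtimes_r G)\to 0,\qquad \partial(x_a,x_b)=(1-a_*)x_a+(1-b_*)x_b,
\end{equation*}
so that $\K_0(A/I\rtimes_r G)$ is the group of coinvariants $\K_0(A/I)_G=\K_0(A/I)/\langle x-g_*x\rangle$.

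Next I would work out the action on generators. For a letter $g$ one has $g_*[1_{\cB(t)}+I]=[1_{\cB(gt)}+I]$ when $gt$ is reduced, $g_*[1_{\cB(g^{-1}t')}+I]=[1_{\cB(t')}+I]$ when a single cancellation occurs, and the total-cancellation case reproduces the identity $a.1_{\cB(a^{-1})}=1_{\cB(b)}+1_{\cB(b^{-1})}+1_{\cB(a^{-1})}$ (modulo $I$) already recorded in the construction of $A$. Passing to coinvariants, the relation $g_*x\equiv x$ lets me strip off the leading letter of any word, so $[1_{\cB(t)}+I]$ is congruent to the class of its \emph{last} letter; hence $\K_0(A/I)_G$ is generated by the four classes $[1_{\cB(z)}+I]$, $z\in\{a,a^{-1},b,b^{-1}\}$. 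Applying $g_*x\equiv x$ to the total-cancellation identities gives $[1_{\cB(a)}+I]+[1_{\cB(a^{-1})}+I]\equiv 0$ and $[1_{\cB(b)}+I]+[1_{\cB(b^{-1})}+I]\equiv 0$, leaving two generators $[1_{\cB(a)}+I]$ and $[1_{\cB(b)}+I]$.

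To see that these two generators are free---so that the coinvariants are exactly $\Z^2$ and not a proper quotient---I would exhibit a $G$-invariant homomorphism onto $\Z^2$. Send $[1_{\cB(t)}+I]$ to $v(z_0)$, where $z_0$ is the last letter of $t$ and $v(a)=(1,0)$, $v(a^{-1})=(-1,0)$, $v(b)=(0,1)$, $v(b^{-1})=(0,-1)$. Because $\sum_z v(z)=0$, the refinement relations are respected (the three surviving children contribute $\sum_{z\neq z_0^{-1}}v(z)=-v(z_0^{-1})=v(z_0)$), so this is well defined on $\K_0(A/I)$; and since left translation preserves the tail of an infinite reduced word, it descends to the coinvariants. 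The resulting map $\K_0(A/I)_G\to\Z^2$ is surjective, and the composite $\Z^2\twoheadrightarrow \K_0(A/I)_G\twoheadrightarrow\Z^2$ is then a surjective endomorphism of $\Z^2$, hence an isomorphism, forcing $\K_0(A/I)_G\cong\Z^2$.

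The main obstacle I anticipate is the coinvariant computation itself: correctly handling the cancellation cases of the $G$-action on the generators and, above all, proving that no further relations collapse the two surviving generators. The reduction ``$[1_{\cB(t)}+I]$ depends only on the last letter'' controls the generators from above, but the decisive ingredient is the last-letter valuation, which supplies the matching lower bound; without such an explicit $G$-invariant functional one cannot exclude hidden torsion or a drop in rank. Correctly invoking the free-group form of the Pimsner--Voiculescu sequence---and verifying that its boundary map is the one displayed above---is the remaining point requiring care.
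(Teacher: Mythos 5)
Your proposal is correct and follows the same skeleton as the paper's proof: the Pimsner--Voiculescu sequence for the free group, the vanishing of $\K_1(A/I)$ because $A/I$ is AF, the identification of $\K_0(A/I\rtimes_r G)$ with the coinvariants $\K_0(A/I)/\textrm{im}(\sigma)$, and the reduction --- by stripping leading letters and invoking the total-cancellation identities --- to the two generators $[1_{\cB(a)}]_0$ and $[1_{\cB(b)}]_0$. Where you genuinely diverge is at the decisive lower-bound step. The paper simply asserts that $n[1_{\cB(a)}]_0+m[1_{\cB(b)}]_0$ does not lie in $\textrm{im}(\sigma)$ for $(n,m)\neq(0,0)$, without exhibiting a mechanism; your last-letter valuation $[1_{\cB(t)}]_0\mapsto v(z_0)$ supplies exactly such a mechanism. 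It is well defined on $\K_0(A/I)$ because $\sum_z v(z)=0$ makes it compatible with the refinement relations, and $\K_0(A/I)$ --- being the inductive limit of the level-$n$ lattices $\Z^{\{|t|=n\}}$ along the refinement maps --- is presented by precisely those relations; it is invariant under all three cancellation patterns of left translation (the total-cancellation case again using $\sum_z v(z)=0$), so it factors through the coinvariants; and the composite $\Z^2\to\K_0(A/I)_G\to\Z^2$ is visibly the identity on the generators, which rules out torsion or rank drop. This buys a fully self-contained proof of the injectivity half that the paper leaves implicit, at the modest cost of having to justify the presentation of $\K_0(A/I)$ by generators and refinement relations (which is standard for this AF algebra, since the level-$n$ minimal projections $1_{\cB(t)}+I$, $|t|=n$, are orthogonal, non-zero, and sum to $1$).
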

\begin{proof}
Pimsner and Voiculescu were able to calculate a six-term exact sequence for the reduced crossed product for any action of $G$ on any \Cs. Considering the action of $G$ on $A/I$, we have the exact sequence
\begin{align*}
\xymatrix{\K_0(A/I)^n  \ar[r]^-{\sigma}     & \K_0(A/I)  \ar[r]^-{\iota} & \K_0(A/I\rtimes_r G) \ar[d]^-{{\bold 0}}\\
\K_1(A/I\rtimes_r G) \ar[u]  & \K_1(A/I) \ar[l] & \K_1(A/I)^n. \ar[l]}
\end{align*}
Here, the map $\sigma$ denotes $\sum_i (1-t_i)$ with $t_1=a, g_2=b$ and $\iota$ is the map induced by the inclusion of $A/I$ in the crossed product $A/I\rtimes_r G$. Recall that separable $\AF$-algebras (in particular $A/I$) have trivial $\K_1$\cp~\cite[p.\  147]{RorLarLau:k-theory}. Using exactness and the first isomorphism theorem we have that
\begin{align*}
\K_0(A/I\rtimes_r G) & = \textrm{ker} ({\bold 0}) = \textrm{im} ({\iota}) \cong  \K_0(A/I) \ /\ \textrm{ker} ({\iota}) =\K_0(A/I) \ /\ \textrm{im} ({\sigma})\\
 & = \{[p]_0-[q]_0\colon p,q\in M_n(A/I) \textrm{ projections}, n\in \N\} \ /\ \textrm{im} ({\sigma})
\end{align*}
Fix a projection $p=[p_{ij}]\in M_n(A/I)$. The projection $p$ is equivalent to a sum of projections in $A/I$. The reason is that the $\AF$ approximation can be made using a finite-dimensional subalgebra of $A/I$ together with the standard matrix units. (Alternatively, use that $\K_0(A/I) \cong \Cy(\widehat{A/I}, \Z)$ by the isomorphism $\dim([p]_0)(x)=\textrm{Tr}(p(x))$, where Tr is the non-normalized trace map\cp~\cite[p.\  54]{RorLarLau:k-theory}. Hence $p\sim p_{11}\oplus\cdots\oplus p_{nn}$\cp~\cite[p.\  119]{RorLarLau:k-theory}.) 

To simplify notation let us suppress $I$. We have that $\K_0(A/I)$ is generated by the elements $\{[p]_0\colon p\in \cN\}$. Using this characterization we have that $\textrm{im} ({\sigma})$ is generated by $\{[p]_0-[a.p]_0, [p]_0-[b.p]_0\colon p\in \cN\}$. For $p\in \cN$ the translates $a^{-1}.p$ and $b^{-1}.p$ are orthogonal sums of elements in $\cN$. Hence the set $\{[p]_0-[a^{-1}.p]_0, [p]_0-[b^{-1}.p]_0\colon p\in \cN\}$ is also contained in $\textrm{im} ({\sigma})$. We obtain that
$$\textrm{im} ({\sigma})= \langle [p]_0-[t.p]_0, \colon p\in \cN , t\in G \rangle\subseteq \K_0(A/I),$$
where $\langle\cdot \rangle$ is the usual notation for ``the smallest group containing". 
Using this $G$-invariance we get that for any projections $p,q\in M_n(A/I)$, 
$$[p]_0-[q]_0= n[1_{\cB(a)}]_0+m[1_{\cB(b)}]_0+ k[1_{\cB(a^{-1})}]_0+l[1_{\cB(b^{-1})}]_0+ \textrm{im} ({\sigma}),$$
for some $n,m,k,l\in \Z$. Since 
\begin{align*}
\sigma(0,-[1_{\cB(b^{-1})}]_0) & = [1_{\cB(a)}]_0+[1_{\cB(a^{-1})}]_0,\\
\sigma(-[1_{\cB(a^{-1})}]_0,0) & = [1_{\cB(b)}]_0+[1_{\cB(b^{-1})}]_0,
\end{align*}
we can simplify the description of $[p]_0-[q]_0$ to contain only $1_{\cB(a)}$ and $1_{\cB(b)}$. These two projections are linearly independent in $\K_0(A/I)$, which is isomorphic to $\Cy(\widehat{A/I}, \Z)$ by the isomorphism $\dim([p]_0)(x)=\textrm{Tr}(p(x))$. Moreover, $n[1_{\cB(a)}]_0+m[1_{\cB(b)}]_0$ does not belong to $\textrm{im} ({\sigma})$ for any $n,m\in \Z$. We conclude that
$$\K_0(A/I\rtimes_r G)  =  \{n[1_{\cB(a)}]_0+m[1_{\cB(b)}]_0\colon n,m\in \Z\} \ /\ \textrm{im} ({\sigma})\cong \Z^2.$$
\end{proof}
	
\begin{lem}\label{lem7}
Let $p$ and $q$ be finite sums of projections in $A/I$ with coefficients in $\Z$. Then $p+q=a.p+b.q$ in $A/I$ if, and only if, $p=n1$ and $q=k1$ for some $n,k\in \Z$.
\end{lem}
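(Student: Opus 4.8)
The plan is to translate the statement into a pointwise functional equation on the spectrum $X=\widehat{A/I}$ and solve it by an induction that peels off the letters of reduced words. First I would record, via Lemma~\ref{lem-1} and Lemma~\ref{lem1}, that every integer combination of projections in $A/I$ is a locally constant $\Z$-valued function on $X$, and that $X$ may be identified with the space of infinite reduced words $\xi=z_1z_2z_3\cdots$ in $a,b,a^{-1},b^{-1}$, the projection $1_{\cB(t)}+I$ becoming the indicator of the cylinder $C_t=\{\xi:\xi\text{ begins with }t\}$. Under this identification the action is $(s.f)(\xi)=f(s^{-1}\xi)$, where $s^{-1}\xi$ is formed by left multiplication and reduction; concretely $a^{-1}$ deletes a leading $a$ and otherwise prepends $a^{-1}$, and similarly for $b^{-1}$ (this is exactly the bookkeeping behind the identity $a.1_{\cB(a^{-1})}=1_{\cB(b)}+1_{\cB(b^{-1})}+1_{\cB(a^{-1})}+1_{\{e\}}$ quoted earlier). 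Thus for $p,q\in\Cy(X,\Z)$ the relation $p+q=a.p+b.q$ becomes
\begin{equation*}
p(\xi)+q(\xi)=p(a^{-1}\xi)+q(b^{-1}\xi)\qquad(\xi\in X).
\end{equation*}
The ``if'' direction is immediate, since the unit is $G$-invariant, so $a.(n1)=n1$ and $b.(k1)=k1$.

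For the ``only if'' direction I would argue by downward induction on the \emph{depth} of the pair $(p,q)$, i.e.\ the least $N$ with both functions constant on every cylinder $C_w$, $|w|=N$ (finite since $p,q$ are finite integer combinations of the $1_{C_t}$). Fixing $N\ge2$, I would evaluate the functional equation at $\xi=z_1z_2\cdots$ and split into four cases by the first letter $z_1$. When $z_1=a$, the term $p(a^{-1}\xi)$ sees the shifted word $z_2z_3\cdots$ and so depends on $z_{N+1}$, while every other term depends only on $z_1,\dots,z_N$; forcing the $z_{N+1}$-dependence to cancel shows $p$ is already determined by the first $N-1$ letters on $C_a\cup C_b\cup C_{b^{-1}}$. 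The case $z_1=b$ is symmetric and gives $q$ determined by the first $N-1$ letters on $C_a\cup C_{a^{-1}}\cup C_b$. When $z_1=a^{-1}$ (resp.\ $z_1=b^{-1}$), both $a^{-1}\xi$ and $b^{-1}\xi$ arise by prepending, so both right-hand terms depend only on the first $N-1$ letters; this forces $p+q$ to be determined by the first $N-1$ letters on $C_{a^{-1}}$ (resp.\ $C_{b^{-1}}$).

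Combining the four conclusions reduces the depth. On $C_a\cup C_b\cup C_{b^{-1}}$ both $p$ (directly) and $q$ have depth $\le N-1$; on $C_{a^{-1}}$ one writes $p=(p+q)-q$, where $p+q$ has depth $\le N-1$ from the $z_1=a^{-1}$ case and $q$ from the $z_1=b$ case; symmetrically $q=(p+q)-p$ has depth $\le N-1$ on $C_{b^{-1}}$. Since for $N\ge2$ the first letter is determined by the first $N-1$ letters, these regional statements glue to the global conclusion that $(p,q)$ has depth $\le N-1$. Iterating brings us to depth $1$, where I would finish by hand: writing $p=\sum_s p_s1_{C_s}$, $q=\sum_s q_s1_{C_s}$ and feeding in the cases $z_1=a$ and $z_1=b$ forces $p_a=p_b=p_{b^{-1}}$ with $q_a=q_{b^{-1}}$, and $q_a=q_{a^{-1}}=q_b$ with $p_b=p_{a^{-1}}$; together these give $p_a=p_b=p_{b^{-1}}=p_{a^{-1}}$ and $q_a=q_b=q_{a^{-1}}=q_{b^{-1}}$, that is $p=n1$ and $q=k1$.

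The main obstacle I anticipate is the bookkeeping rather than any deep idea: because $a^{-1}$ and $b^{-1}$ act on a word by either deleting or prepending a letter according to its first letter, one must track in each of the four cases exactly which letters each term depends on, and one must notice that at depth $1$ the four regional statements do \emph{not} glue automatically (they only yield constancy on each half-space $C_s$ separately), so the explicit depth-$1$ computation is genuinely needed to force the four half-space values to coincide.
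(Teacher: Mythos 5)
Your argument is correct and is essentially the paper's proof in dual form: where you evaluate the pointwise equation $p(\xi)+q(\xi)=p(a^{-1}\xi)+q(b^{-1}\xi)$ on the boundary and reduce the depth by one per step via the four-way split on the leading letter, the paper runs the same induction on the common length $k$ of the defining cylinder words, its cases (1)--(4) matching your cases $z_1=a,b,a^{-1},b^{-1}$, and both arguments terminate in the same explicit length-one computation. The only real difference is presentational --- the paper phrases the descent as a forced grouping of words that must carry equal coefficients, while you phrase it as cancellation of the dependence on the $(N+1)$-st letter, with the identity $p=(p+q)-q$ on $C_{a^{-1}}$ playing the role of the paper's step (3).
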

\begin{proof}
Using Lemma~\ref{lem1} write $p$ and $q$ as 
$$p=\sum_{i=1}^n c_ip_i +I, \ \ \ q=\sum_{i=1}^m d_jq_j +I,$$
with $c_i,d_j\in \Z$ and $p_i,q_j\in \cN$ fulfilling that $p_i\neq p_j$ and $q_i\neq q_j$ for $i \neq j$.

It is enough to show that $p$ is a sum (with coefficients in $\Z$) of elements from $\{1_{\cB(a)}+1_{\cB(b)}+1_{\cB(b^{-1})} $, $1_{\cB(a^{-1})}\}+I$ and that $q$ is a sum of elements from $\{1_{\cB(b)}+1_{\cB(a)}+1_{\cB(a^{-1})}$, $1_{\cB(b^{-1})}\} +I$. Having this we obtain the desired conclusion by counting the number of occurrences of $1_{\cB(a)}+I$, $1_{\cB(b)}+I$, $1_{\cB(a^{-1})}+I$, $1_{\cB(b^{-1})}+I$ in the equation $p+q=a.p+b.q$ in $A/I$.

Let us show that $p$ and $q$ are sums as described above. First, write $p$ and $q$ using projections $p_i=1_{\cB(x_i)}$ and $q_j=1_{\cB(y_j)}$, where all the words $x_i, y_j$ have the same length $k$. Use that
\begin{align*}
1_{\cB(a)} &=1_{\cB(aa)}+1_{\cB(ab)}+1_{\cB(ab^{-1})}\\
 &\vdots& \\
1_{\cB(b^{-1})}&=1_{\cB(b^{-1}a)}+1_{\cB(b^{-1}a^{-1})}+1_{\cB(b^{-1}b^{-1})}
\end{align*}
to ensure that all words $x_i, y_j$ have the same length. Now we most take into account how the condition $p+q=a.p+b.q$ groups words together, in order to obtain the decompositions asserted above.

\emph{Suppose that $k=1$.} The projections $a.p_i$, $b.q_j$ can only have the form
\begin{align*} 
a.p_i&\in\big\{1_{\cB(aa)}, 1_{\cB(ab)}, 1_{\cB(ab^{-1})}, 1_{\cB(a^{-1})}+1_{\cB(b)}+1_{\cB(b^{-1})}\big\}+I,\\
b.q_j&\in\big\{1_{\cB(ba)}, 1_{\cB(bb)}, 1_{\cB(ba^{-1})}, 1_{\cB(a)}+1_{\cB(a^{-1})}+1_{\cB(b^{-1})}\big\}+I.
\end{align*}
Since $p+q=a_11_{\cB(a)}+a_21_{\cB(b)}+a_31_{\cB(a^{-1})}+a_41_{\cB(b^{-1})}+I$ for some $a_i\in \Z$, we must be able to group $1_{\cB(aa)}, 1_{\cB(ab)}, 1_{\cB(ab^{-1})}$ together in order to get $p+q=a.p+b.q$ in $A/I$; this implies that $a_1=a_2=a_4$. Similarly the projections $1_{\cB(ba)}, 1_{\cB(bb)}, 1_{\cB(b^{-1}a)}$ must be grouped together, i.e., must occur with the same coefficient. We conclude that $p$ and $q$ are sums of the form predicted above.

\emph{Suppose that $k\geq 2$.} We will now argue that one can reduce the case $k$ to the case $k-1$ by grouping the projections in an appropriate way. 

Let $A,B$ denote the sets $\{x_i\colon i=1,\dots,n\}, \{y_j \colon j=1,\dots,m\}$. The subsets $a.A$, $b.B\subseteq G$ consist of (reduced) words of length $k-1$, $k+1$. Let $z=z_1\dots z_k$ be a word of length $k$.

(1) Assume that $z\in A$ and $z_1\neq a^{-1}$. Then $az_1z_2\dots z_k\in a.A$. One can argue that $a.A$ must contain any word $az_1\dots z_{k-1}y$ of length $k+1$. To see this notice that $az_1z_2\dots z_k\in a.A$ has length $k+1$ and cannot be found among words in $A\cup B$ of length $k$. Therefore we are forced to group $az_1z_2\dots z_k$ together with 2 other words $az_1z_2\dots z_{k-1}y$ of length $k+1$ in $a.A$ (no words of length $k+1$ in $b.B$ start with $a$). Together, these three words of length $k+1$ in $a.A$ correspond to one word $az_1\dots z_{k-1}$ in $A\cup B$ of length $k$. (We will not use that $az_1\dots z_{k-1}\in A\cup B$. The point is that any $az_1z_2\dots z_{k-1}y$ belongs to $a.A$.) Hence $A$ must contain any word $z_1\dots z_{k-1}y$ of length $k$. Replace these words $x_i$ (not starting with $a^{-1}$) by words $x_i'$ of length $k-1$ and replace the corresponding projections $p_i$ by projections $p_i'$.

(2) Assume that $z\in B$ and $z_1\neq b^{-1}$. By symmetry, $B$ must contain any word $z_1\dots z_{k-1}y$ of length $k$. Replace these words $y_j$ (not starting with $b^{-1}$) by words $y_j'$ of length $k-1$ and replace the corresponding projections $q_j$ by projections $q_j'$.

(3) Assume that $z\in A$ and $z_1= a^{-1}$. One can argue that $A$ must contain any word $z_1\dots z_{k-1}y$ of length $k$. To see this notice that the only words in $a.A\cup b.B$ starting with $a^{-1}$ have length $k-1$. Hence $z_1z_2\dots z_k$ cannot be found among words in $a.A\cup b.B$. Therefore we are forced to group $z_1z_2\dots z_k$ together with two other words $z_1z_2\dots z_{k-1}y$ of length $k$ in $A$ (no words of length $k$ in $B$ start with $a^{-1}$; if such a word existed it was eliminated in part (2)). Together, these three words of length $k$ in $A$ correspond to one word $z_1z_2\dots z_{k-1}$ in $a.A\cup b.B$ of length $k-1$. (We will not use that $z_1\dots z_{k-1}\in a.A\cup b.B$. The point is that any $z_1\dots z_{k-1}y$ belongs to $A$.) Hence $A$ must contain any word $z_1\dots z_{k-1}y$ of length $k$. Replace these words $x_i$ (starting with $a^{-1}$) by words $x_i'$ of length $k-1$ and replace the corresponding projections $p_i$ by projections $p_i'$.

(4) Assume that $z\in B$ and $z_1= b^{-1}$. By symmetry, $B$ must contain any word $z_1\dots z_{k-1}y$ of length $k$. Replace these words $y_j$ (starting with $b^{-1}$) by words $y_j'$ of length $k-1$ and replace the corresponding projections $q_j$ by projections $q_j'$. 

(1)--(4) We conclude that we can write $p$ and $q$ as 
$$p=\sum_{i=1}^n c'_ip'_i, \ \ \ q=\sum_{i=1}^m d'_jq'_j,$$
with $c'_i,d'_j\in \Z$ and $p'_i,q'_j\in \cN$ fulfilling that $p'_i\neq p'_j$ and $q'_i\neq q'_j$ for $i \neq j$. Furthermore, $p'_i=1_{\cB(x_i')}, q'_j=1_{\cB(y_j')}$ with $x_i',y_j'$ words of length $k-1$. This shows that one can reduce the case $k$ to the case $k-1$ for $k\geq 2$.
\end{proof}
	
\begin{lem}\label{lem8}
The $K_1$-group of $A/I\rtimes_r G$ is $\Z^2$.
\end{lem}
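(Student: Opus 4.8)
The plan is to extract $\K_1(A/I\rtimes_r G)$ from the Pimsner--Voiculescu six-term exact sequence displayed in the proof of Lemma~\ref{lem6}. Since $A/I$ is an $\AF$-algebra by Lemma~\ref{lem-1}, its $\K_1$-group vanishes, so $\K_1(A/I)^n=0$ and the relevant stretch of the sequence collapses to
$$0=\K_1(A/I)\to \K_1(A/I\rtimes_r G)\xrightarrow{\ \partial\ }\K_0(A/I)^2\xrightarrow{\ \sigma\ }\K_0(A/I),$$
where, as in Lemma~\ref{lem6}, $\sigma(x,y)=x-a.x+y-b.y$ and $\partial$ is the boundary map. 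Exactness forces $\partial$ to be injective with image $\ker(\sigma)$, so the problem reduces to identifying the group of pairs $(x,y)\in\K_0(A/I)^2$ satisfying $x+y=a.x+b.y$ in $\K_0(A/I)$.

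First I would translate this $\K_0$-identity into an identity inside the algebra $A/I$, so that Lemma~\ref{lem7} becomes applicable. By Lemma~\ref{lem6} every element of $\K_0(A/I)$ is a $\Z$-linear combination of classes $[p]_0$ with $p\in\cN$, and under the dimension isomorphism $\K_0(A/I)\cong\Cy(\widehat{A/I},\Z)$ the class $[1_{\cB(t)}]_0$ is sent to the indicator function of the clopen support of $1_{\cB(t)}$---which is exactly the image of $1_{\cB(t)}\in A/I$ under the Gelfand transform. Writing $x=[P]_0$ and $y=[Q]_0$ for formal $\Z$-combinations $P,Q$ of elements of $\cN$, and observing that each translate $a.1_{\cB(t)}$ and $b.1_{\cB(t)}$ is again a finite sum of elements of $\cN$ in $A/I$ (the finitely supported summands vanishing modulo $I$, as in $a.1_{\cB(a^{-1})}=1_{\cB(b)}+1_{\cB(b^{-1})}+1_{\cB(a^{-1})}$), I get that $x+y=a.x+b.y$ holds in $\K_0(A/I)$ if and only if $P+Q=a.P+b.Q$ holds in $A/I$.

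At this point Lemma~\ref{lem7} applies verbatim: the identity $P+Q=a.P+b.Q$ in $A/I$ forces $P=n1$ and $Q=k1$ for some $n,k\in\Z$, and conversely every such pair lies in $\ker(\sigma)$ because the action fixes the unit. Hence $\ker(\sigma)=\{(n[1]_0,k[1]_0):n,k\in\Z\}$; since $[1]_0$ corresponds to the constant function $1$ and therefore has infinite order in $\Cy(\widehat{A/I},\Z)$, this group is free abelian of rank two. Together with the isomorphism $\K_1(A/I\rtimes_r G)\cong\ker(\sigma)$ from the first step, this yields $\K_1(A/I\rtimes_r G)\cong\Z^2$.

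The one genuinely delicate point is the bridge in the second step between equality in $\K_0(A/I)$ and equality in $A/I$: one must verify that both the dimension map and the Gelfand transform send a given $\Z$-combination of the generators $1_{\cB(t)}$ to the very same locally constant integer-valued function, and that applying the translations $a.$ and $b.$ keeps everything within the $\Z$-combinations of elements of $\cN$ modulo $I$. Once this identification is secured, the heavy combinatorial work has already been done in Lemma~\ref{lem7}, and the remainder of the computation is immediate.
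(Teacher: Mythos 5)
Your proposal is correct and follows essentially the same route as the paper: the Pimsner--Voiculescu sequence with $\K_1(A/I)=0$ reduces the computation to $\ker(\sigma)$, which is identified with pairs $(n[1]_0,k[1]_0)$ by passing through the dimension isomorphism $\K_0(A/I)\cong \Cy(\widehat{A/I},\Z)$ and invoking Lemma~\ref{lem7}. (The paper phrases the bridge via non-normalized traces of matrix projections rather than $\Z$-combinations of elements of $\cN$, and it contains a typo citing Lemma~\ref{lem3} where Lemma~\ref{lem7} is meant, but the argument is the same.)
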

\begin{proof}
Consider the Pimsner-Voiculescu six-term exact sequence for the action of $G$ on $A/I$:
\begin{align*}
\label{diagram0}
\xymatrix{\K_0(A/I)^n  \ar[r]^-{\sigma}  & \K_0(A/I)  \ar[r] & \K_0(A/I\rtimes_r G) \ar[d]\\
\K_1(A/I\rtimes_r G) \ar[u]_-{{\delta}}  & \K_1(A/I) \ar[l]_-{{\bold 0}} & \K_1(A/I)^n. \ar[l]}
\end{align*}

Here the map $\sigma$ denotes $\sum_i (1-t_i)$ with $t_1=a, t_2=b$ and $\delta$ is the index map. Recall that separable $\AF$-algebras (in particular $A/I$) have trivial $\K_1$. Using exactness we obtain that $\delta$ is injective ($\textrm{im} ({\bold 0})=\textrm{ker} (\delta)$) and hence that
\begin{align*}
	\K_1(A/I\rtimes_r G) & \cong \textrm{im} (\delta)= \textrm{ker} (\sigma)\\
	& = \{(h,h')\in \K_0(A/I)^2\colon \sigma(h,h')=0 \}.
\end{align*}	
Fix $(h,h')\in \K_0(A/I)^2$. Find projections $e,f,e',f'$ in $M_m(A/I)$ such that $h=[e]_0-[f]_0$ and $h'=[e']_0-[f']_0$. Suppose that $\sigma(h,h')=0$, i.e., $([e]_0-[f]_0)-a.([e]_0-[f]_0)+([e']_0-[f']_0)-b.([e']_0-[f']_0)=0$. Now note that $\K_0(A/I) \cong \Cy(\widehat{A/I}, \Z)$ by the isomorphism $\dim([e]_0)(x)=\textrm{Tr}(e(x))$, where Tr is the non-normalized trace map. We obtain that
$$p+q=a.p+b.q,$$
with $p=\textrm{Tr}(e)-\textrm{Tr}(f)$ and $q=\textrm{Tr}(e')-\textrm{Tr}(f')$.
By Lemma~\ref{lem3}, $p=n1$ and $q=k1$ with $n,k\in \Z$. It follows that $(h,h')=(n[1]_0,k[1]_0)$. It follows that
\begin{align*}
	\K_1(A/I\rtimes_r G) & \cong \{(h,h')\in \K_0(A/I)^2\colon \sigma(h,h')=0 \}\\
	& = \{(n[1]_0,k[1]_0)\colon  n,k\in \Z\} \cong  \Z^2.
\end{align*}
\end{proof}

\subsection{Example 2}
Let us now turn our attention to the second construction. Let $y_a$ be any word (including $e$) that does not end with $a^{-1}$. Denote by $\cB(y_aa^{\N}b)$ and $\cB(y_aa^{\N}b^{\N}a)$ the unions
$$\bigcup_{k\in \N}\cB(y_a\underbrace{a\dots a}_kb) \ \ \textrm{ and }\ \ \bigcup_{k,l\in\N}\cB(y_a\underbrace{a\dots a}_k\underbrace{b\dots b}_la).$$
Continuing this way define $\cB(y_aa^\N b^\N a^\N\dots a^\N b)$ and $\cB(y_aa^\N b^\N a^\N\dots b^\N a)$ for any strictly positive length of the alternating powers $a^\N$ and $b^\N$. Set
$$\cN_a=\{1_{\cB(y_aa^\N b^\N a^\N\dots a^\N b)}, 1_{\cB(y_aa^\N b^\N a^\N\dots b^\N a)}\},$$
allowing any strictly positive length of the alternating powers $a^\N$ and $b^\N$ (odd length if the last power is $a^\N$ and even if it is $b^\N$) and any word $y_a$ that does not end with $a^{-1}$. In a similar way define $\cN_{b}$, $\cN_{a^{-1}}$ and $\cN_{b^{-1}}$.

Denote by $B$ the (separable) $G$-invariant \Cs\ in $\elll^\infty(G)$ generated by $\cN$ and $\cM=\bigcup_{|t|=1} \cN_t$ and consider the $G$-invariant closed two-sided ideal $I$ of $B$ generated by the projections with finite support. We have the following result:
\begin{thrm}\label{mainth2}
The crossed product $B/I\rtimes_r G$ is a Kirchberg algebra in the UCT class with the following $K$-groups:
$$K_0(B/I\rtimes_r G)\cong 0, \ \ \ K_1(B/I\rtimes_r G)\cong \Z^4.$$
\end{thrm}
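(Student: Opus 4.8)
The plan is to follow the template of Theorem~\ref{mainth1} step for step: first show that $B/I\rtimes_r G$ is a Kirchberg algebra in the UCT class by re-establishing the dynamical and structural lemmas of Section~\ref{sec3} with $B/I$ in place of $A/I$, and then compute the two $K$-groups via the Pimsner--Voiculescu sequence. The only genuinely new difficulty lies in the $K$-theory, where the enlarged generating set $\cN\cup\cM$ alters both the image and the kernel of the relevant boundary map; the first part is almost entirely a matter of checking that the Example~1 arguments survive.

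For the first part I would verify the analogues of Lemmas~\ref{lem-1}--\ref{lem5}. Since $\cN\subseteq B$, every cylinder projection is still present, so the proofs of topological freeness (Lemma~\ref{lem3}), minimality (Lemma~\ref{lem4}) and amenability (Lemma~\ref{lem5}) carry over unchanged---in particular the approximating sequence $T_i$ of Lemma~\ref{lem5} is built only from cylinders and still works---as does the infiniteness of every non-zero projection (Lemma~\ref{lem2}), which only uses that such a projection dominates some $1_{\cB(t)}+I$. The one point needing real care is the analogue of Lemma~\ref{lem1}: one must check that the Boolean algebra of subsets of $G$ generated by $\cN\cup\cM$ together with the singletons is, up to finite support, closed under the operations used there, so that $B$ is a unital abelian $\AF$-algebra of real rank zero and every projection in $B/I$ lifts to a finite $\Z$-combination of generators. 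Granting this, Proposition~\ref{prop21} gives that $B/I\rtimes_r G$ is a Kirchberg algebra, and membership in the UCT class follows as in Theorem~\ref{mainth1} (using that $B/I$ is $\AF$, hence in the bootstrap class, and that $\mathbb{F}_2$ satisfies the Baum--Connes conjecture with coefficients).

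For the second part, $B/I$ being $\AF$ gives $\K_1(B/I)=0$, so the Pimsner--Voiculescu sequence collapses exactly as in Lemmas~\ref{lem6} and~\ref{lem8} to
\[
\K_0(B/I\rtimes_r G)\cong\operatorname{coker}(\sigma),\qquad \K_1(B/I\rtimes_r G)\cong\ker(\sigma),
\]
where $\sigma(h,h')=(1-a)h+(1-b)h'$ on $\K_0(B/I)\cong\Cy(\widehat{B/I},\Z)$. The computation thus reduces to the cokernel and kernel of $\sigma$ on the enlarged module, and the decisive new phenomenon is telescoping. Writing $P_a:=1_{\cB(a^\N b)}+I$ one checks $a.P_a=P_a-1_{\cB(ab)}$, hence $(1-a)P_a=1_{\cB(ab)}$, so that the basic cylinder classes---which survived in Theorem~\ref{mainth1} and there produced the cokernel $\Z^2$---now lie in $\operatorname{im}(\sigma)$. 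To force $\operatorname{coker}(\sigma)=0$ one must push this further and show each generator coming from $\cM$ is itself in the image; the mechanism I would use is the infinite telescoping identity $(1-b)\big(-\sum_{n\ge1}b^{-n}.P_a\big)=P_a$ (and its analogues in the other directions), the point being that the one-sidedly shifted sum $\sum_{n\ge1}b^{-n}.P_a=1_{\cB(b^{-\N}a^\N b)}+I$ is again a projection of the kind appearing in the construction. The precise alternating-power definition of $\cM$ is exactly what is needed to guarantee that these telescoped projections belong to $B$.

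For the kernel I expect the answer $\Z^4$ to be spanned by the two ``diagonal'' classes $([1]_0,0)$ and $(0,[1]_0)$, present already because $[1]_0$ is $G$-invariant, together with two boundary classes such as $(P_a,Y_a)$ and $(Y_b,P_b)$: here $Y_a=1_{\cB(b^{-\N}ab)}+I$ satisfies $(1-b)Y_a=-1_{\cB(ab)}$, so that $\sigma(P_a,Y_a)=(1-a)P_a+(1-b)Y_a=0$, and $(Y_b,P_b)$ is its image under the order-two automorphism $a\leftrightarrow b$, which preserves the whole construction. The hard part, and the analogue of Lemma~\ref{lem7}, is the converse inclusion: proving that these four classes exhaust $\ker(\sigma)$, i.e.\ that the equation $(1-a)h=-(1-b)h'$ forces $(h,h')$ into their span. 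This is a combinatorial argument on reduced words of the type carried out in Lemma~\ref{lem7}, but it must now be run with the infinite unions $1_{\cB(y\,a^\N b^\N\cdots)}$ in place of single cylinders, and controlling how the telescoping cancellations propagate across the alternating blocks is where I expect the real work to be; together with the verification that the telescoped projections used for the cokernel actually lie in $B$, this is the main obstacle of the proof.
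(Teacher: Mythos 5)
Your overall strategy is the paper's (reduce to Proposition~\ref{prop21}, kill $\K_0$ by telescoping, compute $\K_1$ as $\ker\sigma$ via a word-combinatorial lemma analogous to Lemma~\ref{lem7}), but two of your concrete claims fail, and both failures come from the same source: the infinite unions you write down are not all in $B$. The algebra $B$ contains $1_{\cB(b^{\N}a^{\N}b)}=\sum_{n\ge 1}b^{n}.P_a$ (it belongs to $\cN_b$), but it does not contain $\sum_{n\ge 1}b^{-n}.P_a=1_{\cB(b^{-\N}a^{\N}b)}$: that is a strictly increasing union of projections of $B$, hence not a norm limit, and on the pairwise disjoint sets $\cB(b^{-n}a)$ every finite combination of generators of $B$ is eventually constant while $1_{\cB(b^{-n}a^{\N}b)}$ is not. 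For $\K_0$ this is harmless --- replace your telescoping element by $e=1_{\cB(b^{\N}a^{\N}b)}$, which satisfies $b^{-1}.e=e+P_a$; this is exactly the identity used in Lemma~\ref{lem14}. For $\K_1$ it is fatal to your proposed basis: $Y_a=1_{\cB(b^{-\N}ab)}$ is not in $B$ for the same reason, so $(P_a,Y_a)$ is not an element of $\K_0(B/I)^2$, and indeed Lemma~\ref{lem16} shows that no element of $\ker\sigma$ has first coordinate $[P_a]_0$ (first coordinates are exactly $n[1]_0+m[r]_0$, and $P_a$ is not of that form). The correct extra generators are $([r]_0,0)$ and $(0,[r']_0)$ with $r=1_{\cB(b)\cup\cB(a^{\N}b)\cup\cB(a^{-1})\setminus\cB(a^{-\N}b^{-1})}$, an $a$-invariant projection modulo $I$, and $r'$ its mirror image; the kernel elements are killed coordinatewise, not by cancellation between the two coordinates. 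You land on $\Z^4$ only by accident of counting.

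Second, the assertion that topological freeness, minimality and infiniteness of projections ``carry over unchanged'' because every non-zero projection dominates some $1_{\cB(t)}+I$ conceals the main new dynamical work. In Example~2 a projection of $B/I$ lifts not to a sum of cylinders but to a sum of products $(p-p_1)\cdots(p-p_n)$ with $p_i\in\cM$, and it is not obvious that such a product dominates a non-zero cylinder modulo $I$: one must produce, for an arbitrary finite family $p_1,\dots,p_n\in\cM$ below $p$, a cylinder (or translate $t.p$) orthogonal to all of them, which is the delicate disjointness bookkeeping with the sets $\cB(a^{k_1}b^{-k_2}a^{k_3}\cdots)$ carried out in the proof of Lemma~\ref{lem12}, and on which the proofs of topological freeness and minimality for $\widehat{B/I}$ also rely (Lemma~\ref{lem13}). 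You correctly flag the lifting lemma and amenability, but this orthogonality construction is where Example~2 genuinely departs from Example~1, and your outline does not supply it.
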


Before we give a proof of Theorem~\ref{mainth2} let us establish some preliminary results.

\begin{lem}\label{lem11}
Every projection in $B/I$ lifts to a finite sum of projections of the form
$$(p-p_1)\cdots (p-p_n), \ \ \ 0\leq p_i\lneqq p,$$
with $n\in \N$, $p\in \cN\cup \cM$ and $p_i\in \cM$.
\end{lem}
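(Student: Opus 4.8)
The plan is to imitate the proof of Lemma~\ref{lem1}, the only genuinely new ingredient being a combinatorial analysis of how the ``pattern'' projections in $\cM$ multiply and complement one another. First, as in the analogue of Lemma~\ref{lem-1}, $B$ is a unital separable abelian $\AF$-algebra and hence of real rank zero, so a projection $p\in B/I$ lifts to a projection $q\in B$ by \cite[Theorem 3.14]{BroPed:realrank}. Since $B$ is generated by the commuting projections in $\cN\cup\cM$ together with the finitely supported projections $\{1_{\{t\}}\}$, and finite products of commuting projections are again projections, a density argument produces a real-linear combination $\sum_i c_i s_i$ within $1/2$ of $q$ in which each $s_i$ is a finite product of such generators. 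Passing to the pairwise orthogonal atoms $r_1,\dots,r_k$ of the finite Boolean algebra generated by the finitely many generators occurring, and then rounding coefficients to $\{0,1\}$ exactly as in Lemma~\ref{lem1}, I obtain $q=\sum_{j\in J} r_j$, where each atom has the form $r_j=\prod_{i\in I_j} g_i\cdot\prod_{i\notin I_j}(1-g_i)$ with $g_i\in\cN\cup\cM\cup\{1_{\{t\}}\}$.

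Everything then reduces to showing that, modulo the finitely supported ideal $I$, each atom $r_j$ is a finite sum of projections of the asserted form. I would deduce this from two combinatorial facts about $\cN\cup\cM$ inside $\elll^\infty(G)$, both proved by the word-combinatorics of the free group: (M1) for $g,h\in\cN\cup\cM$ the product $gh$ is, modulo $I$, either $0$ or a single element of $\cN\cup\cM$; and (M2) for each $g\in\cN\cup\cM$ the complement $1-g$ is, modulo $I$, a finite sum of projections of the asserted form. Granting (M1), when $I_j\ne\varnothing$ the positive part $\prod_{i\in I_j} g_i$ collapses to a single generator $p\in\cN\cup\cM$, and then $r_j=p\prod_{i\notin I_j}(1-g_i)=p-\bigvee_{i\notin I_j} pg_i$, where each $pg_i$ is again a single sub-generator of $p$. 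The pattern-valued factors $pg_i\in\cM$ are kept as the subtracted terms, giving exactly $(p-p_1)\cdots(p-p_n)$; the cylinder-valued factors $pg_i\in\cN$ are re-expanded into a positive sum of cylinders, exactly as in the cylinder case treated in Lemma~\ref{lem1}, after which one peels the remaining pattern subtractions off each cylinder. The case $I_j=\varnothing$ is handled first by writing $1=\sum_{|t|=N}1_{\cB(t)}$ modulo $I$ for $N$ large, reducing it to the previous case inside each cylinder. Finally, discarding the finitely supported summand of $q$ yields a lift of $p$ of precisely the required form, exactly as at the end of Lemma~\ref{lem1}.

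The heart of the matter---and the step I expect to be the real obstacle---is the careful bookkeeping behind (M1) and (M2). The guiding picture is that cylinders are laminar, the patterns $\cB(y\,a^\N b^\N\cdots)$ sit in nested towers as their depth grows, and the one genuine source of crossing is the intersection of a cylinder with a pattern, which ``freezes'' a variable block; for instance $1_{\cB(a^3 b)}\cdot 1_{\cB(a^2 a^\N b^\N a)}=1_{\cB(a^3 b^\N a)}$, and the latter is again a pattern (it lies in $\cN_b$ with $y_b=a^3$), so no new kind of set appears. For (M2) the delicate point is that a naive splitting of $1-g$ produces ``off-alternation'' pieces such as $\cB(a^\N b^{-1})$ and pure-power sets such as $\{a^k:k\ge 1\}$ which need \emph{not} individually lie in $B$; one must instead keep them grouped into a single special difference, writing for example
\[
1-1_{\cB(a^\N b)}=1_{\cB(a^{-1})}+1_{\cB(b)}+1_{\cB(b^{-1})}+\big(1_{\cB(a)}-1_{\cB(a^\N b)}\big)\quad\text{in }B/I,
\]
whose last summand is special with base $1_{\cB(a)}\in\cN$ and subtracted term $1_{\cB(a^\N b)}\in\cM$. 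For deeper patterns the complement telescopes through the nested tower, each step contributing a special difference $\pi_j-\pi_{j+1}$ of consecutive patterns $\pi_j\in\cM$ (such as $1_{\cB(a^\N b)}-1_{\cB(a^\N b^\N a)}$), together with the top cylinder-minus-pattern term. Verifying that every intersection and every complement can be organized in this way---never isolating a set that escapes $B$, and absorbing all genuinely finite debris into $I$---is the bulk of the work.
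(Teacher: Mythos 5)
Your proposal follows the paper's proof essentially step for step: lift by real rank zero, decompose the lift into the Boolean atoms exactly as in Lemma~\ref{lem1}, use that $\cN\cup\cM\cup\{0\}$ (and its enlargement $\cM'$ by the finitely supported projections) is closed under multiplication to collapse each atom's positive part to a single $p\in\cN\cup\cM$, and then sort each factor $p-pg_i$ according to whether $pg_i$ lies in $\cN'$ (expand and recurse) or in $\cM$ (retain as a subtracted term). The only cosmetic differences are that the paper never needs your case $I_j=\varnothing$, since those atoms are already excluded by the rounding argument of Lemma~\ref{lem1}, and that when $p\in\cM$ and $pg_i\in\cN$ the difference $p-pg_i$ is a finite sum of cylinders \emph{and} patterns rather than of cylinders alone (e.g.\ $1_{\cB(a^\N b)}-1_{\cB(a^{3}b)}$ contains the pattern $1_{\cB(a^{2}a^\N b)}$), which changes nothing in your argument.
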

\begin{proof}
Let $p$ be any non-zero projection in $B/I$. Set $\cN'=\cN\cup\{1_{\{t\}}\colon t\in G\}$ and $\cM'=\cN'\cup\cM$. Since $B=\cstar(\cM')$ has real rank zero, $p$ lifts to a projection $q\in B$ such that $p=q+I$; see \cite[Theorem 3.14]{BroPed:realrank}. Since $\cM'$ is a countable set of projections we may denote them by $(p_i)_{i=1}^\infty$. As in the proof of Lemma~\ref{lem1} there exist numbers $m,k\in \N$ and non-empty subsets $I_1,\dots, I_k\subseteq \{1,\dots,m\}$ such that
$$q=\sum _{j=1}^k r_j,\ \ \ r_j=(\prod_{i\in I_j}p_i)(\prod_{i\in \{1,\dots,m\}\setminus I_j}(1-p_i)), \ \ \ p_i\in \cM'.$$
Fix $j$ such that $r_j \notin I$ (remove other $r_j$'s). Using that $\cM'\cup\{0\}$ is closed under multiplication (this is easy to verify), we have that $p=\prod_{i\in I_j}p_i\in \cN\cup\cM$. 

Fix $i\in I_j^c$ such that $pp_i\notin I$ (remove other $p_i$'s). If $p_i\in \cN'$ one can rewrite $1-p_i$ as a sum of elements from $\cN'$ and $p-pp_i$ as a sum of elements in $\cM'$ (giving the desired form modulo $I$). Hence we may assume $p_i\in \cM$. If $p\in \cM$ then $pp_i\in \cM$ (since $\cM\cup\{0\}$ is closed under multiplication). Hence we may assume $p\in \cN$. If $pp_i\in \cN$ one can rewrite $p-pp_i$ as a sum of elements from $\cN'$ (giving the desired form modulo $I$). Otherwise, $pp_i\in \cM$ as needed. We conclude that we can write $r_j$ as a sum of projections of the form
$$(p-p_1)\cdots (p-p_n), \ \ \ 0\leq p_i\lneqq p,$$
with $n\in \N$, $p\in \cN\cup \cM$ and $p_i\in \cM$.
\end{proof}
	
\begin{lem}\label{lem12}
Every non-zero projection in $B/I$ is infinite in $B/I\rtimes_r G$. 
\end{lem}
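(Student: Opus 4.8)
The plan is to mirror the proof of Lemma~\ref{lem2}. First I would invoke Lemma~\ref{lem11} to lift the given non-zero projection to a finite sum of projections of the form $r=(p-p_1)\cdots(p-p_n)$ with $p\in\cN\cup\cM$ and $p_i\in\cM$, $0\le p_i\lneqq p$; as in the proof of Lemma~\ref{lem11} these summands may be taken pairwise orthogonal. Since the sum of two orthogonal infinite projections is again infinite (recorded in the proof of Lemma~\ref{lem2}), it suffices to treat a single $r$. Writing $r=1_E$ with $E=\supp(p)\setminus\bigcup_{i=1}^n\supp(p_i)$ (the $p_i$ commute and lie below $p$, so the product is the projection onto this set), the whole argument reduces to producing a group element $s$ with $s.r\lneqq r$: then $r=(u_s r)^*(u_s r)$ and $(u_s r)(u_s r)^*=s.r\lneqq r$ with $u_s r\in B/I\rtimes_r G$, exactly as in Lemma~\ref{lem2}, so $r$ is infinite.

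The heart of the proof is the construction of $s$, for which I would seek the stronger property that $s.\supp(p)\subseteq E$ and $s.\supp(p)\subsetneq\supp(p)$; granting this, $s.E\subseteq s.\supp(p)\subseteq E$ gives $s.r\le r$, while $E\setminus s.\supp(p)$ is infinite (a proper sub-cone or sub-template of $\supp(p)$ survives), so $s.r\lneqq r$ in $B/I$. When $p\in\cN$, say $p=1_{\cB(t)}$ with $t\ne e$, I would use the clean self-embeddings $s.\cB(t)=\cB(tmt)$ obtained from $s=tm$ for a reduced word $m$ that starts with a letter $\ne t_{|t|}^{-1}$ and ends with a letter $\ne t_1^{-1}$ (so that no cancellation occurs at either junction and $s.\cB(t)=\cB(tmt)$ holds exactly). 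The word $m$ is almost free, so I can choose it long and generic enough that the deep sub-cone $\cB(tmt)$ is disjoint from each of the finitely many templates $\supp(p_i)$. When $p\in\cM$ the set $\supp(p)$ is itself a template and a single translation can only carry it to another template; here I would instead use a conjugation $s=ygy^{-1}$ (with $y$ the defining prefix of $p$) chosen so that $s.\supp(p)$ is a \emph{deep sub-template} lying inside $\supp(p)$ and, by steering its prefix, disjoint from all $\supp(p_i)$.

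The step I expect to be the main obstacle is this dodging: showing that inside $\supp(p)$ one can always place a sub-cone (for $p\in\cN$) or sub-template (for $p\in\cM$) that is disjoint from finitely many given $\cM$-templates $\supp(p_1),\dots,\supp(p_n)$ while remaining the image of $\supp(p)$ under a single translation with no word cancellation. I would handle it by the combinatorial fact that each $\cM$-template is nowhere dense in the rooted tree of reduced words---its complement within any cone again contains a full cone---so, inducting over $i=1,\dots,n$ and on the depth of the prefix, one can successively refine the chosen prefix so as to avoid every $\supp(p_i)$; the case $p\in\cM$ requires the extra care that the refinement must preserve the alternating-block pattern defining $\supp(p)$, which is exactly where the explicit form of $\cM$ from the construction of Example~2 enters.
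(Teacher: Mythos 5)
Your overall strategy coincides with the paper's: reduce via Lemma~\ref{lem11} and the orthogonality of the summands to a single projection $r=(p-p_1)\cdots(p-p_n)$, then manufacture a group element $s$ with $s.p\lneqq r$ (so that $u_sr$ witnesses infiniteness), treating the cases $p\in\cN$ and $p\in\cM$ separately and normalizing by a translation so that the prefix of $p$ is trivial. You have also correctly located the crux: placing a translate of $\supp(p)$ inside $\supp(p)$ so that it misses the finitely many $\cM$-templates $\supp(p_1),\dots,\supp(p_n)$.

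The gap is in your proposed resolution of that crux. The claim that the complement of an $\cM$-template within \emph{any} cone again contains a full cone is false as stated ($\cB(ab)\subseteq\cB(a^{\N}b)$, so the complement of that template inside the cone $\cB(ab)$ is empty), and consequently the successive refinement over $i=1,\dots,n$ can get trapped: after refining to dodge $\supp(p_1)$ your chosen cone may lie entirely inside $\supp(p_2)$, from which no further refinement escapes; moreover, at a single vertex of the tree several templates together can in principle forbid every admissible next letter, since each template rules out one letter (the one completing a word of its defining prefix language) and only three letters are available. So one must avoid all $n$ templates \emph{simultaneously}, and this requires structural input about $\cM$ beyond nowhere-density. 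The paper supplies exactly that input: it lists the explicit pairwise disjoint family $\cB(a^{\N}b)$, $\cB(a^{k_1}b^{-\N}a^{-1})$, $\cB(a^{k_1}b^{-k_2}a^{\N}b),\dots$ (resp.\ $\cB(a^{\N}xb^{\N}a)$ and $\cB(a^{k}xb)$ in the $\cM$ case) that can contain the supports of the $p_i$, and then writes down a concrete element $t=ab^{-1}ab^{-1}\cdots b^{-1}ab$ (resp.\ $t=a^{N+1}yba^{-1}b$) going diagonally deep past all of them, with a direct verification that $t.p$ is orthogonal to each $p_i$ and properly below $r$. Your sketch becomes a proof once the heuristic is replaced by such an explicit simultaneous-avoidance argument.
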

\begin{proof}
The sum of two orthogonal infinite projections is again an infinite projection. By Lemma~\ref{lem11} we only need to show that the projection
$$r:=(p-p_1)\cdots (p-p_n), \ \ \ 0\leq p_i\lneqq p,$$
with $n\in \N$, $p\in \cN\cup \cM$ and $p_i\in \cM$, is infinite in $B/I\rtimes_r G$. We do this in two steps.

\emph {Suppose that $p\in \cN$.} We find $t\in G$ such that $t.p\lneqq r$. This implies that $r$ (suppressing $I$) is infinite by
$$r=(u_t r)^*(u_t r), \ \ \ (u_t r)(u_t r)^*=t.r\leq t.p\lneqq r, \ \ \ u_t r\in B/I\rtimes_r G.$$
By symmetry we may assume that $p=1_{\cB(xa)}$. Changing $t$ we may assume that $p=1_{\cB(a)}$. (If $x\neq e$ use that $x^{-1}.p_i\in\cM$ to find $t'\in G$ such that $t'.(x^{-1}.p)\lneqq (x^{-1}.r)$ and define $t:=xt'x^{-1}$.) The key argument in finding $t$ is to notice that each $p_i\lneqq 1_{\cB(a)},$ $i=1,\dots, n$, has support contained in one of the following sets (identifying $a^k$ with $\underbrace{a\dots a}_k$ and $b^k$ with $\underbrace{b\dots b}_k$ for $k\in \N$):
\begin{align*}
&\cB(a^\N b) \\
&\cB(a^{k_1} b^{-\N} a^{-1}), && k_i\in \N\\
&\cB(a^{k_1} b^{-k_2}a^{\N} b), && k_i\in \N\\
&\cB(a^{k_1} b^{-k_2} a^{k_3} b^{-\N} a^{-1}), && k_i\in \N\\
&\cB(a^{k_1} b^{-k_2} a^{k_3} b^{-k_4} a^{\N} b), && k_i\in \N\\
&\cB(a^{k_1} b^{-k_2} a^{k_3} b^{-k_4} a^{k_5} b^{-\N} a^{-1}), && k_i\in \N\\
&\vdots
\end{align*}
Notice that these sets are pairwise disjoint. Hence for $N\in \N$ large enough one can find a projection
$$q=1_{\cB(a^{k_1} b^{-k_2} a^{k_3}\dots b^{-k_N} a^{\N} b)}, \ \ \ k_i = 1,$$
that is pairwise orthogonal to each $p_i$, $i=1,\dots, n$. Defining $t=a^{k_1} b^{-k_2}\dots$ $b^{-k_N} a b,$ we obtain that
\begin{align*}
t.p & = 1_{\cB(a^{k_1} b^{-k_2} a^{k_3}\dots b^{-k_N} a ba)}\\
 & \lneqq q \ \leq \ p-p_i
\end{align*}
for each $i=1,\dots, n$.  We conclude that $t.p\lneqq r$.

\emph {Suppose that $p\in \cM$.} Let us find $t\in G$ such that $t.p\lneqq r$. This implies that $r$ (suppressing +$I$) is infinite. By symmetry we may assume that $p\in \cN_a$, meaning that $p$ has a support of the form $\cB(y_a a^\N b^\N a^\N\dots a^\N b)$ or $\cB(y_aa^\N b^\N a^\N\dots b^\N a)$, with $y_aa$ being a reduced word. To prevent redundancy we only consider $p=1_{\cB(y_aa^\N b^\N a^\N\dots a^\N b)}$. Changing $t$ we may assume that $y_a=e$. To simplify the notation, write $p$ as $1_{\cB(a^\N xb)}$, where $x=b^\N a^\N\dots b^\N a^\N$ (allowing $x=e$). For later use let $y$ denote the word $b^1 a^1\dots b^1 a^1$ where the powers $\N$ in $x$ are replaced by 1. The key argument in finding $t$ is to notice that each $p_i\lneqq 1_{\cB(a^\N xb)},$ $i=1,\dots, n$, has support contained in one (or two) of the following sets:
\begin{align*}
&\cB(a^\N x b^\N a) \\
&\cB(a^{k} x b), && k\in \N.
\end{align*}
Let $q_0$, $q_k$, $k\in \N$ denote the corresponding projections in $B$. For $N\in \N$ large enough we have that for every $i=1,\dots, n$ there exists a $k_i\in \{0,\dots,N\}$ such that $p_i\leq q_{k_i}$. This implies that
$$(p-q_0)\cdots(p-q_{N})\leq (p-q_{k_i})\leq (p-p_i), \ \ \ i=1,\dots, n.$$
Defining $t:=a^{N+1} y b a^{-1} b $ we obtain that
\begin{align*}
t.p \cdot p & =  1_{\cB(a^{N+1} y b a^{-1} b a^\N xb)\cap \cB(a^\N x b)}=t.p,\\
t.p\cdot q_0 & =  1_{\cB(a^{N+1} y b a^{-1} b a^\N xb)\cap \cB(a^\N x b^\N a)}=0,\\
t.p\cdot q_k & =  1_{\cB(a^{N+1} y b a^{-1} b a^\N xb)\cap \cB(a^{k} x b)}=0, && k\in \{1,\dots,N\},
\end{align*}
Hence $t.p\leq (p-q_0)\cdots(p-q_{N})\leq (p-p_1)\cdots(p-p_n)=r$. (If all $p_i=0$, i.e., $r=p$, one can define $t:=a$.) We conclude that $t.p\lneqq r$.
\end{proof}	
	
\begin{lem}\label{lem13}
The action of $G$ on $\widehat{B/I}$ is topologically free, minimal and amenable.
\end{lem}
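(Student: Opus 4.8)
The plan is to obtain all three properties by reducing to the arguments already used for Example~1 in Lemmas~\ref{lem3}, \ref{lem4} and~\ref{lem5}. Inspecting those proofs, the only ingredients special to $A/I$ were: (a) that every non-zero projection of the quotient dominates a cylinder projection $1_{\cB(s)}+I$ with $s\neq e$ (furnished by Lemma~\ref{lem1}); and (b) the partition-of-unity relation $\sum_{|t|=n}1_{\cB(t)}+I=1$. Relation (b) involves only the cylinder projections $1_{\cB(t)}\in\cN\subseteq B$ and the fact that $I$ is generated by the finitely supported projections, so it holds verbatim in $B/I$. Hence the whole problem comes down to re-establishing (a) for $B/I$.

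\emph{Step 1 (cylinder domination).} I would prove that every non-zero projection in $B/I$ dominates some $1_{\cB(s)}+I$ with $s\neq e$. By Lemma~\ref{lem11} it suffices to treat a single projection $r=(p-p_1)\cdots(p-p_n)$ with $p\in\cN\cup\cM$, $p_i\in\cM$ and $0\le p_i\lneqq p$, whose support is $\supp(p)\setminus\bigcup_i\supp(p_i)$. The support analysis carried out in the proof of Lemma~\ref{lem12} confines each $\supp(p_i)$ to an explicit finite list of ``thin'' unions of cylinders and, from this, produces a projection $q\le r$ lying in $\cM$ (of the form $1_{\cB(w\,a^\N\cdots b)}$) that is orthogonal to every $p_i$. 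I would then collapse each infinite power $a^\N$, $b^\N$ in the defining word of $q$ to a single letter; since $\cB(\cdots a^\N\cdots)=\bigcup_k\cB(\cdots a^k\cdots)$, this produces a reduced word $s$ with $\cB(s)\subseteq\supp(q)\subseteq\supp(r)$, so $1_{\cB(s)}+I\le r$. As an arbitrary non-zero projection of $B/I$ is a finite sum of such $r$'s, it dominates a cylinder projection.

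\emph{Steps 2--4 (the three properties).} With Step~1 in hand, topological freeness and minimality follow by repeating the proofs of Lemmas~\ref{lem3} and~\ref{lem4} word for word: once a dominating cylinder $1_{\cB(s)}+I\le p$ is available, both arguments merely translate and compare cylinder projections, all of which remain non-zero and pairwise distinct in $B/I$ exactly as in $A/I$. For amenability I would reuse the sequence $T_i=\sum_{|t|<i}(\tfrac{1}{\sqrt{i}}1_{\cB(t)}+I)u_t$ of Lemma~\ref{lem5}, which now lies in $\Cy_c(G,B/I)$; the verifications of $\langle T_i,T_i\rangle=1$ and of $\|u_s\cdot T_i-T_i\|_2^2\le 2|s|/i$ use only the cylinder projections and relation (b), and so transfer unchanged.

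The main obstacle is Step~1, and within it the case $p\in\cM$: one must track the finitely many structured supports $\supp(p_i)$ allowed by Lemma~\ref{lem12} and check that, after excising them, a genuine cylinder set actually survives inside $\supp(r)$ rather than merely another sparse union of cylinders. Once this bookkeeping is secured the remaining steps are transcriptions of the Example~1 proofs.
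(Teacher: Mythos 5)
Your proposal is correct and follows essentially the same route as the paper: the paper's proof likewise reduces topological freeness and minimality to the arguments of Lemmas~\ref{lem3} and~\ref{lem4} by using Lemma~\ref{lem11} together with the support analysis in the proof of Lemma~\ref{lem12} to locate a cylinder projection below any non-zero projection of $B/I$, and reuses the maps $T_i$ from Lemma~\ref{lem5} for amenability since $A\subseteq B$. Your Step~1 (in particular the observation that collapsing each power $a^\N$, $b^\N$ to a single letter turns the orthogonal projection produced in Lemma~\ref{lem12} into a genuine cylinder $1_{\cB(s)}+I\le r$) just makes explicit the bookkeeping the paper leaves implicit.
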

\begin{proof}
\emph{1) Topologically free:} Follow the proof of Lemma~\ref{lem3}, but use Lemma~\ref{lem11} and the proof of Lemma~\ref{lem12} instead of Lemma~\ref{lem1} to find the appropriate $1_{\cB(s)}\in \cN$.

\emph{2) Minimal:} Follow the proof of Lemma~\ref{lem4}, but use Lemma~\ref{lem11} and the proof of Lemma~\ref{lem12} instead of Lemma~\ref{lem1} to find the appropriate $1_{\cB(r)}\in \cN$.

\emph{3) Amenable:} Since $A\subseteq B$ the maps $T_i$ from the proof of Lemma~\ref{lem5} ensure that the action of $G$ on $\widehat{B/I}$ is amenable.
\end{proof}

\begin{lem}\label{lem14}
Every projection in $B/I$ has trivial $K_0$-class in ${K_0(B/I\rtimes_r G)}$.
\end{lem}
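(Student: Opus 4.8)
The plan is to reduce the statement, via the Pimsner--Voiculescu sequence, to showing that the $\K_0$-class of each \emph{generating} projection of $B/I$ vanishes in $\K_0(B/I\rtimes_r G)$, and then to verify this separately for cylinder projections and for the projections in $\cM$. Write $[p]$ for the image of $[p]_0$ under the inclusion-induced map $\iota\colon \K_0(B/I)\to \K_0(B/I\rtimes_r G)$. As in the proof of Lemma~\ref{lem6}, since $\K_1(B/I)=0$ the map $\iota$ is surjective with $\ker\iota=\operatorname{im}(\sigma)$; consequently $[t.q]=[q]$ for every $t\in G$ and every projection $q$ in $B/I$ (because $[q]_0-[t.q]_0\in\operatorname{im}(\sigma)$), while any genuine identity of projections in $B/I$ descends to an identity in $\K_0(B/I\rtimes_r G)$. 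Hence it suffices to prove $[p]=0$ for $p$ ranging over a generating set of projections of $\K_0(B/I)$.

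First I would dispose of the cylinder projections. For a single letter one has the orthogonal decomposition $a^{-1}.1_{\cB(a^\N b)}=1_{\cB(b)}+1_{\cB(a^\N b)}$ in $B/I$, whence $[1_{\cB(b)}]=[a^{-1}.1_{\cB(a^\N b)}]-[1_{\cB(a^\N b)}]=0$; the symmetric computations give $[1_{\cB(t)}]=0$ for every $t$ with $|t|=1$. For a general reduced word $t=z_1\cdots z_n$ the equality $\cB(t)=(z_1\cdots z_{n-1}).\cB(z_n)$ together with translation invariance yields $[1_{\cB(t)}]=[1_{\cB(z_n)}]=0$.

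The main work, and the step I expect to be the real obstacle, is the projections in $\cM$: such a projection is supported on an \emph{infinite} union of cylinders, so its class is genuinely new in $\K_0(B/I)$ and cannot be produced by any finite telescoping of cylinders. The device I would use is to pass to one higher ``depth''. Since $\cB(y\,a^\N\cdots)=y.\cB(a^\N\cdots)$ whenever $y$ does not end in $a^{-1}$, translation invariance reduces matters to the generators with $y=e$. Given such an $m_0=1_{\cB(a^\N\cdots)}\in\cM$, I would choose the depth-one-larger generator $M$ obtained by prepending an infinite block of the opposite letter, namely $M=1_{\cB(b^\N a^\N\cdots)}\in\cM$; telescoping its outer block gives the projection identity $M-b.M=1_{\cB(b\,a^\N\cdots)}=b.m_0$ in $B/I$, so that
\begin{align*}
[m_0]=[b.m_0]=[M]-[b.M]=0.
\end{align*}
The same prepend-and-telescope argument applies to every $\cM$-generator, so all of them have vanishing class. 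The point is that although $\cB(a^\N\cdots)$ itself is never a telescoping difference, a suitable \emph{translate} of it is one, at the next depth.

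Finally, to reach an arbitrary projection $e$ in $B/I$ I would invoke Lemma~\ref{lem11} to write $e$ as a finite sum of projections of the form $(p-p_1)\cdots(p-p_n)$ with $p\in\cN\cup\cM$ and $p_i\in\cM$. Expanding each product and using that $\cN\cup\cM\cup\{0\}$ is closed under multiplication (as recorded in Lemma~\ref{lem11}), every resulting term is again a cylinder projection or an $\cM$-projection, whose class is $0$ by the previous two steps. Summing the contributions gives $[e]=0$, which is the assertion.
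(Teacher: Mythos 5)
Your proof is correct and follows essentially the same route as the paper: reduce to the generators via Lemma~\ref{lem11}, establish translation invariance of classes in the crossed product, and kill each generator $p$ by exhibiting a projection $e$ one ``depth'' higher with $t.e=e+p$ (your identity $M-b.M=b.m_0$ is the paper's $b^{-1}.e=e+p$ translated by $b$). The only cosmetic differences are that you justify $[t.q]=[q]$ through $\operatorname{im}(\sigma)\subseteq\ker\iota$ rather than through the Murray--von Neumann equivalence implemented by $u_t$, and you replace the paper's induction on $n$ for the product $(p-p_1)\cdots(p-p_n)$ by a direct inclusion--exclusion expansion in $\K_0(B/I)\cong \Cy(\widehat{B/I},\Z)$.
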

\begin{proof}
For two orthogonal projections $p,q$ in $B/I$ having trivial $\K_0$-class we have that $[p+q]_0=0$. By Lemma~\ref{lem1} we only need to show that $[r]_0=0$ (suppressing $I$), where
$$r:=(p-p_1)\cdots (p-p_n), \ \ \ 0\leq p_i\lneqq p,$$
with $n\in \N$, $p\in \cN\cup \cM$ and $p_i\in \cM$.
By induction we only need to consider the case $n=1$. To see this notice that for $n=2$ we have
$$[p]_0=[p-r]_0+[r]_0=[p_1]_0+[p_2-p_1p_2]_0+[r]_0.$$
Since $\cM \cup \{0\}$ is closed under multiplication (cf.~proof of Lemma~\ref{lem11}), the case $n=1$ gives that $[p]_0$, $[p_1]_0$, $[p_2-p_1p_2]_0$ have trivial $\K_0$-classes. Hence, also $[r]_0=0$. For $n\geq 3$ we use a similar argument on the equality
\begin{align*}
[p]_0 & = [p-r]_0+[r]_0\\
 & = [p-(p-(p_1-p_1p_2)-(p_2-p_1p_2)-p_1p_2)\prod_{i=3}^n (p-p_i)]_0+[r]_0\\
 & = [((p_1-p_1p_2)+(p_2-p_1p_2)+p_1p_2)\prod_{i=3}^n (p-p_i)]_0+[r]_0\\
 & = [\prod_{i\neq 1}^n (p_1-p_1p_i)]_0+[\prod_{i\neq 2}^n (p_2-p_2p_i)]_0+[\prod_{i=3}^n(p_1p_2-p_1p_2p_i)]_0+[r]_0.\\
\end{align*}
Let us now focus on the case $n=1$. We may assume that $p_1=0$, i.e., $r=p$. If $p_1\neq 0$ simply use $[p]_0=[p_1]_0=0$ on $[p]_0=[p-p_1]_0+[p_1]_0$. 

\emph {Suppose that $p\in \cN$.} By symmetry we may assume that $p=1_{\cB(xa)}$. Since $[x^{-1}.p]_0=[p]_0$ (cf.~proof of Lemma~\ref{lem2}),
we may assume that $p=1_{\cB(a)}$. With $e:= 1_{\cB(b^\N a)}$ it follows that $b^{-1}.e = e + p$. Using the equality $[b^{-1}.e]_0 = [e]_0$ we obtain that $[p]_0=0$.

\emph {Suppose that $p\in \cM$.} By symmetry we may assume that $p\in \cN_a$, meaning that $p$ has a support of the form $\cB(y_a a^\N b^\N a^\N\dots a^\N b)$ or $\cB(y_aa^\N b^\N a^\N\dots b^\N a)$, with $y_aa$ being a reduced word. Since $[y_a^{-1}.p]_0=[p]_0$ we may assume that $y_a=e$. To simplify the notation, write $p$ as $1_{\cB(a^\N x)}$. With $e:=1_{\cB(b^\N a^\N x)}$ we have that $b^{-1}.e = e + p$. Hence $[p]_0=0$.
\end{proof}

\begin{lem}\label{lem15}
The $K_0$-group for $B/I\rtimes_r G$ is $0$.
\end{lem}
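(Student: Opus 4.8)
The plan is to repeat the Pimsner-Voiculescu computation of Lemma~\ref{lem6}, now for the action of $G$ on $B/I$, and to substitute Lemma~\ref{lem14} at the decisive step. First I would consider the Pimsner-Voiculescu six-term exact sequence for the action of $G$ on $B/I$ (laid out exactly as in the proof of Lemma~\ref{lem6}), with boundary map $\sigma=\sum_i(1-t_i)$, $t_1=a$, $t_2=b$, and with $\iota$ the map induced by the inclusion $B/I\hookrightarrow B/I\rtimes_r G$. Since $B/I$ is a separable $\AF$-algebra (Lemma~\ref{lem-1} applies verbatim to $B/I$), its $\K_1$-group vanishes, so the map leaving $\K_0(B/I\rtimes_r G)$ is the zero map; exactness then gives $\K_0(B/I\rtimes_r G)=\textrm{im}(\iota)$, exactly as in Lemma~\ref{lem6}.

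It therefore suffices to show that $\iota=0$. Here I would invoke that $B/I$ is a separable abelian $\AF$-algebra, so, repeating the equivalence argument from Lemma~\ref{lem6}, every projection in $M_n(B/I)$ is Murray-von Neumann equivalent to a finite orthogonal sum of projections lying in $B/I$ itself. Hence $\K_0(B/I)$ is generated, as an abelian group, by the classes $[p]_0$ with $p$ a projection in $B/I$.

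Finally, Lemma~\ref{lem14} asserts precisely that $\iota([p]_0)=0$ in $\K_0(B/I\rtimes_r G)$ for every projection $p\in B/I$. As $\iota$ is a group homomorphism vanishing on a generating set, we conclude $\iota=0$, whence $\textrm{im}(\iota)=0$. Combining this with $\K_0(B/I\rtimes_r G)=\textrm{im}(\iota)$ yields $\K_0(B/I\rtimes_r G)=0$.

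I do not anticipate any genuine obstacle, since the substantive work has already been carried out in Lemma~\ref{lem14}; the only point requiring a little care is the reduction of an arbitrary matrix projection to projections sitting inside $B/I$, which is exactly where the $\AF$ structure (rather than merely real rank zero) is used.
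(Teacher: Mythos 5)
Your proposal is correct and follows essentially the same route as the paper: the Pimsner--Voiculescu argument from Lemma~\ref{lem6} gives $\K_0(B/I\rtimes_r G)=\mathrm{im}(\iota)$, the reduction $p\sim p_{11}\oplus\cdots\oplus p_{nn}$ shows $\K_0(B/I)$ is generated by classes of projections in $B/I$, and Lemma~\ref{lem14} kills those classes in the crossed product. No gaps.
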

\begin{proof}
Following the proof of Lemma~\ref{lem6} we have $\K_0(B/I\rtimes_r G) = \textrm{im} ({\iota})$, where $\iota$ is the map induced by the inclusion of $B/I$ in the crossed product $B/I\rtimes_r G$. Using Lemma~\ref{lem14} together with the fact that $p\sim p_{11}\oplus \dots \oplus p_{nn}$ for a projection $p\in M_n(B/I)$ (cf.~Lemma~\ref{lem6}), we have
\begin{align*}
\K_0(B/I\rtimes_r G) & = \{\iota([p]_0-[q]_0)\colon p,q\in M_n(B/I) \textrm{ projections}, n\in \N\}\\
 & = \{[p]_0-[q]_0\colon p,q\in M_n(B/I) \textrm{ projections}, n\in \N\}\\
 & = 0.
\end{align*}
\end{proof}

\begin{lem}\label{lem16}
Let $p$ and $q$ be finite sums of projections in $B/I$ with coefficients in $\Z$. Denote by $r$ and $r'$ the projections
$$1_{\cB(b)\cup\cB(a^{\N}b)\cup\cB(a^{-1})\setminus\cB(a^{-\N} b^{-1})} \ \ \textrm{ and }\ \ 1_{\cB(a)\cup\cB(b^{\N}a)\cup\cB(b^{-1})\setminus\cB(b^{-\N} a^{-1})}.$$
Then $p+q=a.p+b.q$ in $B/I$ if, and only if, $p=n1+ mr$ and $q=k1+ lr'$ for some $n,m,k,l\in \Z$.
\end{lem}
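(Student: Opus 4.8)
The plan is to read the identity $p+q=a.p+b.q$ as a $\Z$-linear equation in $\K_0(B/I)\cong\Cy(\widehat{B/I},\Z)$, representing each class by an integer-valued function in $B$ that is well defined modulo the finitely supported functions lying in $I$, and to split the argument into the two implications. For the ``if'' direction it suffices, by linearity together with $a.1=b.1=1$, to check that $r$ is $a$-invariant and $r'$ is $b$-invariant in $B/I$.

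To verify $a.r=r$ I would left-translate the defining set $S_r=\cB(b)\cup\cB(a^\N b)\cup\cB(a^{-1})\setminus\cB(a^{-\N}b^{-1})$ directly. Using $a.\cB(b)=\cB(ab)$, $a.\cB(a^\N b)=\bigcup_{k\geq2}\cB(a^kb)$, $a.\cB(a^{-1})=\{e\}\cup\cB(b)\cup\cB(b^{-1})\cup\cB(a^{-1})$ and $a.\cB(a^{-\N}b^{-1})=\cB(b^{-1})\cup\cB(a^{-\N}b^{-1})$, the terms recombine to give $a.S_r=S_r\cup\{e\}$, so that $a.r-r=1_{\{e\}}\in I$ and hence $a.r=r$ in $B/I$; the identity $b.r'=r'$ follows by the symmetry $a\leftrightarrow b$. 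Consequently $p=n1+mr$ and $q=k1+lr'$ always solve $p+q=a.p+b.q$.

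For the converse I would imitate the word-grouping argument of Lemma~\ref{lem7}, but first dispose of the infinite cylinder sets. By Lemma~\ref{lem11} and the dimension isomorphism, write $p$ and $q$ as integer combinations of generators in $\cN\cup\cM$, and split each into its finite-cylinder ($\cN$) part and its infinite-cylinder ($\cM$) part. The action mixes the two only through finite corrections --- for instance $a.1_{\cB(a^\N b)}=1_{\cB(a^\N b)}-1_{\cB(ab)}$ in $B/I$, and similarly each $\cN_t$-generator is carried to itself, to another such generator with a lengthened pattern, or out to a finite cylinder. Comparing the $\cM$-parts on the two sides of $p+q=a.p+b.q$ should force the $\cM$-part of $p$ to be a $\Z$-multiple of $1_{\cB(a^\N b)}-1_{\cB(a^{-\N}b^{-1})}$ (the $\cM$-part of $r$), and the $\cM$-part of $q$ a $\Z$-multiple of the $\cM$-part of $r'$. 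Subtracting $mr$ and $lr'$ then leaves a pair $(p-mr,\,q-lr')$ lying in $A/I$ and still satisfying the equation, since $a.r=r$ and $b.r'=r'$; Lemma~\ref{lem7} applies verbatim and yields $p-mr=n1$ and $q-lr'=k1$, which completes the proof.

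The main obstacle is this middle step: showing that the $\cM$-part can contribute nothing beyond multiples of $r$ and $r'$. A priori the $\cM$-parts may involve any generators of $\cN_a,\cN_b,\cN_{a^{-1}},\cN_{b^{-1}}$, each an infinite family indexed by the prefix $y$ and by the length of the alternating block $a^\N b^\N a^\N\cdots$. Because left translation shifts these patterns and alters the prefixes, the requirement that the equation hold modulo a finite set means the coefficients must be constant along the resulting infinite shift-orbits while the function stays $\Z$-valued; organizing this bookkeeping branch by branch of the tree, and checking that the only surviving (``eventually periodic'') invariant combinations are those assembling into $r$ and $r'$, is the delicate point. The cleanest route is probably to sort the $\cM$-generators by their leading one-letter type $t$ and to match coefficients of cylinders of each given length, exactly as in the length-reduction of Lemma~\ref{lem7}, the one genuinely new feature being the careful treatment of the boundary between the infinite tails and the finite-depth data.
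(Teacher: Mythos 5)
Your ``if'' direction is complete and correct: the computation $a.S_r=S_r\cup\{e\}$, hence $a.r-r=1_{\{e\}}\in I$, is exactly right (and more explicit than what the paper records), and $b.r'=r'$ follows by the symmetry $a\leftrightarrow b$. Your overall architecture for the converse also matches the paper's: run a word-grouping argument in the style of Lemma~\ref{lem7} with the $\cM$-generators treated as extra ``letters'', then peel off the multiples of $r$ and $r'$ so that Lemma~\ref{lem7} handles what remains.

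The gap is precisely the step you yourself flag as ``the delicate point'': you never prove that the $\cM$-part of $p$ is an integer multiple of $1_{\cB(a^\N b)}-1_{\cB(a^{-\N}b^{-1})}$, and essentially all of the content of the lemma lives there. The paper closes it in two moves. First, it runs the length-reduction of Lemma~\ref{lem7} with the alphabet formally enlarged by the symbols $a^{\pm\N},b^{\pm\N}$ (using identities such as $1_{\cB(c^\N x)}=1_{\cB(c^nc^\N x)}+1_{\cB(c^n x)}+\cdots+1_{\cB(cx)}$ to equalize lengths), concluding that $p$ must be a $\Z$-combination of the specific blocks $1_{\cB(a)}+1_{\cB(b)}+1_{\cB(b^{-1})}$, $1_{\cB(a^{-1})}$, $1_{\cB(a^\N b)}+1_{\cB(b)}$, $1_{\cB(a^{-\N}b^{-1})}$, $1_{\cB(a^\N b^\N a)}+1_{\cB(b^\N a)}$, $1_{\cB(a^{-\N}b^{-\N}a^{-1})}$, \dots{} (and $q$ of the mirror-image blocks). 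Second --- and this is what your sketch omits --- it compares coefficients of each basic projection on the two sides of $p+q=a.p+b.q$: the row for $1_{\cB(a^\N b)}$ gives $a_1+a_2'=a_1$, the row for $1_{\cB(b^\N a)}$ gives $a_2+a_1'=a_1'$, and so on, forcing \emph{all} higher alternating-block coefficients $a_n,a_n',b_n,b_n'$ with $n\geq2$ to vanish; the four remaining rows then yield $d_2=d_1+a_1$, $b_1=-a_1$, $d_2'=d_1'+a_1'$, $b_1'=-a_1'$, which is exactly the statement $p=n1+mr$, $q=k1+lr'$. Without this elimination of the deeper $\cN_t$-generators (those of alternating length at least two, such as $1_{\cB(a^\N b^\N a)}$), your reduction to Lemma~\ref{lem7} does not get off the ground, since a priori the $\cM$-part need not be supported on the depth-one generators at all. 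So the plan is the right one, but as written the proof is incomplete at its crux.
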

\begin{proof}
Using Lemma~\ref{lem11} write $p$ and $q$ as 
$$p=\sum_{i=1}^n c_ip_i +I, \ \ \ q=\sum_{i=1}^m d_jq_j +I,$$
with $c_i,d_j\in \Z$ and $p_i,q_j\in \cN\cup \cM$ such that $p_i\neq p_j$ and $q_i\neq q_j$ for $i \neq j$. We use here that $\cN\cup \cM\cup \{0\}$ is closed under multiplication (when multiplying out the product in the statement of Lemma~\ref{lem11}).

It is enough to show that $p$ is a sum (with coefficients in $\Z$) of elements from $\{1_{\cB(a)}+1_{\cB(b)}+1_{\cB(b^{-1})}$, $1_{\cB(a^{-1})}, 1_{\cB(a^\N b)}+1_{\cB(b)}$, $1_{\cB(a^{-\N} b^{-1})}$, $1_{\cB(a^\N b^\N a)}+1_{\cB(b^\N a)}$, $1_{\cB(a^{-\N} b^{-\N} a^{-1})}$, $\dots\}+I$ and that $q$ is a sum of elements from $\{1_{\cB(b)}+1_{\cB(a)}+1_{\cB(a^{-1})}$, $1_{\cB(b^{-1})}$, $1_{\cB(b^\N a)}+1_{\cB(a)}$, $1_{\cB(b^{-\N} a^{-1})}$, $1_{\cB(b^\N a^\N b)}+1_{\cB(a^\N b)}$, $1_{\cB(b^{-\N} a^{-\N} b^{-1})}$, $\dots\}+I$. We do this in a similar way to the proof of Lemma~\ref{lem7}. First we write $p$ and $q$ using projections $p_i=1_{\cB(x_i)}$ and $q_j=1_{\cB(y_j)}$, where all the words $x_i, y_j$ have the same length $k$. To do this we formally allow $x_i, y_j$ to have letters $\{a^{\N},b^{\N},a^{-\N},b^{-\N}\}$ when calculating the length of $x_i$ and $y_j$ and use that 
\begin{align*}
1_{\cB(c^\N x)}&=1_{\cB(\underbrace{c\dots c}_n c^\N x)}+1_{\cB(\underbrace{c\dots c}_n x)}+\cdots+1_{\cB(c x)},\\
1_{\cB(c^{-\N} x)}&=1_{\cB(\underbrace{c^{-1}\dots c^{-1}}_n c^{-\N} x)}+1_{\cB(\underbrace{c^{-1}\dots c^{-1}}_n x)}+\cdots+1_{\cB(c^{-1} x)},
\end{align*}
for $n\in \N$ and $c\in \{a,b\}$. Secondly we use the equality $p+q=a.p+b.q$ to group some words together and exclude other words until we obtain only the elements listed above. Finally we conclude that $p$ and $q$ have the desired form by counting the number of occurrences of $1_{\cB(a)}+I$, $1_{\cB(b)}+I$, $1_{\cB(a^{-1})}+I$, $1_{\cB(b^{-1})}+I$, $1_{\cB(a^\N b)}+I$, $1_{\cB(b^\N a)}+I$, $1_{\cB(a^{-\N} b^{-1})}+I$, $1_{\cB(b^{-\N} a^{-1})}+I$, $\dots$ in the equation $p+q=a.p+b.q$. Let us consider this computation in more detail: We have that 
\begin{align*}
p & =	d_1(1_{\cB(a)}+1_{\cB(b)}+1_{\cB(b^{-1})})+d_21_{\cB(a^{-1})}+ \\
 & \quad a_1(1_{\cB(a^\N b)}+1_{\cB(b)}) + a_2(1_{\cB(a^\N b^\N a)}+1_{\cB(b^\N a)})+ \dots +\\
 & \quad b_11_{\cB(a^{-\N} b^{-1})} + b_21_{\cB(a^{-\N} b^{-\N} a^{-1})}+\dots + I,\\
a.p & =	d_11_{\cB(a)}+d_2(1_{\cB(a^{-1})}+1_{\cB(b)}+1_{\cB(b^{-1})})+ \\
 & \quad a_11_{\cB(a^\N b)}+ a_21_{\cB(a^\N b^\N a)}+ \dots +\\
 & \quad b_1(1_{\cB(a^{-\N} b^{-1})}+1_{\cB(b^{-1})}) + b_2(1_{\cB(a^{-\N} b^{-\N} a^{-1})}+1_{\cB(b^{-\N} a^{-1})})+\dots+I,\\
q & =	d'_1(1_{\cB(b)}+1_{\cB(a)}+1_{\cB(a^{-1})})+d'_21_{\cB(b^{-1})}+ \\
 & \quad a'_1(1_{\cB(b^\N a)}+1_{\cB(a)}) + a'_2(1_{\cB(b^\N a^\N b)}+1_{\cB(a^\N b)})+ \dots +\\
 & \quad b'_11_{\cB(b^{-\N} a^{-1})} + b'_21_{\cB(b^{-\N} a^{-\N} b^{-1})}+\dots+I,\\
b.q & =	d'_11_{\cB(b)}+d'_2(1_{\cB(b^{-1})}+1_{\cB(a)}+1_{\cB(a^{-1})})+ \\
 & \quad a'_11_{\cB(b^\N a)}+ a'_21_{\cB(b^\N a^\N b)}+ \dots +\\
 & \quad b'_1(1_{\cB(b^{-\N} a^{-1})}+1_{\cB(a^{-1})}) + b'_2(1_{\cB(b^{-\N} a^{-\N} b^{-1})}+1_{\cB(a^{-\N} b^{-1})})+\dots+I,
\end{align*}
and we obtain the following coefficients for the individual projections: 
$$
\begin{array}{ l | l | l }                   
   & p+q & a.p+b.q \\
  \hline  
  1_{\cB(a)}+I & d_1+d_1'+a'_1 & d_1+d'_2 \\
  1_{\cB(b)}+I & d_1+a_1+d'_1 & d_2+d'_1 \\
  1_{\cB(a^{-1})}+I & d_2+d'_1 & d_2 + d'_2 + b'_1 \\
  1_{\cB(b^{-1})}+I & d_1+d'_2 & d_2 + b_1 + d'_2 \\
  1_{\cB(a^\N b)}+I & a_1 + a'_2 & a_1 \\
  1_{\cB(b^\N a)}+I &  a_2 + a'_1 & a'_1 \\
  1_{\cB(a^{-\N} b^{-1})}+I & b_1 & b_1 + b'_2 \\
  1_{\cB(b^{-\N} a^{-1})}+I & b'_1 & b'_1 + b_2 \\
  \vdots & \vdots & \vdots
\end{array}
$$

We conclude that $a_n$, $a'_n$, $b_n$ and $b'_n$ are all zero for $n\geq 2$. With $n=d_1$, $m=a_1$, $k=d'_1$, $l=a'_1$ we get that $d_2=n+m$, $b_1=-m$, $d'_2=k+l$ and $b'_1=-l$, giving the desired form of $p$ and $q$.
\end{proof}

\begin{lem}\label{lem17}
The $K_1$-group for $B/I\rtimes_r G$ is $\Z^4$.
\end{lem}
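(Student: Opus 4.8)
The strategy mirrors the proof of Lemma~\ref{lem8} exactly, replacing the input Lemma~\ref{lem7} by Lemma~\ref{lem16}. The Pimsner--Voiculescu six-term sequence for the action of $G$ on $B/I$ reads
\begin{align*}
\xymatrix{\K_0(B/I)^n  \ar[r]^-{\sigma}  & \K_0(B/I)  \ar[r] & \K_0(B/I\rtimes_r G) \ar[d]\\
\K_1(B/I\rtimes_r G) \ar[u]_-{{\delta}}  & \K_1(B/I) \ar[l]_-{{\bold 0}} & \K_1(B/I)^n, \ar[l]}
\end{align*}
with $\sigma=\sum_i(1-t_i)$, $t_1=a$, $t_2=b$, and $\delta$ the index map. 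Since $B/I$ is an $\AF$-algebra (it is a quotient of the $\AF$-algebra $B$ by an ideal, so is itself $\AF$), its $\K_1$ vanishes, the map labelled ${\bold 0}$ is indeed zero, and exactness forces $\delta$ to be injective with image equal to $\ker(\sigma)$. Hence $\K_1(B/I\rtimes_r G)\cong \ker(\sigma)=\{(h,h')\in \K_0(B/I)^2\colon \sigma(h,h')=0\}$.

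\emph{Identifying the kernel.} First I would translate the condition $\sigma(h,h')=0$ into an equation at the level of the trace. Fix $(h,h')$ and write $h=[e]_0-[f]_0$, $h'=[e']_0-[f']_0$ with projections in some $M_m(B/I)$. Using the isomorphism $\K_0(B/I)\cong \Cy(\widehat{B/I},\Z)$ given by $\dim([e]_0)(x)=\textrm{Tr}(e(x))$, the relation $\sigma(h,h')=0$, i.e.
$$([e]_0-[f]_0)-a.([e]_0-[f]_0)+([e']_0-[f']_0)-b.([e']_0-[f']_0)=0,$$
becomes $p+q=a.p+b.q$ in $B/I$, where $p=\textrm{Tr}(e)-\textrm{Tr}(f)$ and $q=\textrm{Tr}(e')-\textrm{Tr}(f')$ are finite sums of projections in $B/I$ with integer coefficients. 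This is exactly the hypothesis of Lemma~\ref{lem16}, which tells us that $p=n1+mr$ and $q=k1+lr'$ for some $n,m,k,l\in \Z$, with $r,r'$ the explicit projections named there.

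\emph{Computing the rank.} It follows that $(h,h')$ ranges over the subgroup of $\K_0(B/I)^2$ given by
$$\{(n[1]_0+m[r]_0,\ k[1]_0+l[r']_0)\colon n,m,k,l\in \Z\}.$$
The remaining task is to show this subgroup is free of rank $4$, i.e.\ that $[1]_0,[r]_0$ are linearly independent in $\K_0(B/I)$ and likewise $[1]_0,[r']_0$; equivalently, under $\K_0(B/I)\cong \Cy(\widehat{B/I},\Z)$, that the indicator functions $1$ and $1_{\supp(r)}$ (respectively $1$ and $1_{\supp(r')}$) are $\Z$-linearly independent. This holds because $\supp(r)$ is a proper, nonempty clopen subset of $\widehat{B/I}$ with proper nonempty complement: any relation $n1+m1_{\supp(r)}=0$ evaluated on a point of $\supp(r)$ and on a point of its complement forces $n=n+m=0$, hence $n=m=0$. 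Thus $\K_1(B/I\rtimes_r G)\cong \Z^4$.

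\emph{Main obstacle.} The genuinely hard content has already been absorbed into Lemma~\ref{lem16}; the present lemma is essentially a bookkeeping consequence of it combined with the Pimsner--Voiculescu sequence and the $\AF$ structure. The only point requiring a little care is the final independence check: one must verify that $r$ and $r'$ really do survive as nontrivial, pairwise-independent classes after passing to the quotient by $I$, i.e.\ that neither $\supp(r)$ nor $\supp(r')$ is empty or all of $\widehat{B/I}$ modulo the finitely supported projections. This is where I expect to spend the most attention, confirming from the definitions of $\cN$, $\cM$ and $I$ that the relevant cylinder-set combinations defining $r,r'$ are not identified with $0$ or $1$ in $B/I$.
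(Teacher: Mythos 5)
Your proposal is correct and follows essentially the same route as the paper: the Pimsner--Voiculescu sequence plus $\K_1(B/I)=0$ reduces everything to $\ker(\sigma)$, which is identified via the trace isomorphism $\K_0(B/I)\cong \Cy(\widehat{B/I},\Z)$ and Lemma~\ref{lem16}. Your extra care about the linear independence of $[1]_0,[r]_0$ (and $[1]_0,[r']_0$), i.e.\ that $r$ and $r'$ are genuinely non-trivial proper projections in $B/I$, is a point the paper passes over silently, so including it is a mild improvement rather than a deviation.
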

\begin{proof}
Consider the Pimsner-Voiculescu six-term exact sequence for the action of $G$ on $B/I$:
\begin{align*}
\xymatrix{\K_0(B/I)^n  \ar[r]^-{\sigma}  & \K_0(B/I)  \ar[r] & \K_0(B/I\rtimes_r G) \ar[d]\\
\K_1(B/I\rtimes_r G) \ar[u]_-{{\delta}}  & \K_1(B/I) \ar[l]_-{{\bold 0}} & \K_1(B/I)^n. \ar[l]}
\end{align*}

Here, the map $\sigma$ denotes $\sum_i (1-t_i)$ with $t_1=a, t_2=b$ and $\delta$ is the index map. Recall that separable $\AF$-algebras (in particular $B/I$) have trivial $\K_1$. Using exactness we obtain that $\delta$ is injective ($\textrm{im} ({\bold 0})=\textrm{ker} (\delta)$) and hence that
\begin{align*}
	\K_1(B/I\rtimes_r G) & \cong \textrm{im} (\delta)= \textrm{ker} (\sigma)\\
	& = \{(h,h')\in \K_0(B/I)^2\colon \sigma(h,h')=0 \}.
\end{align*}	
Fix $(h,h')\in \K_0(B/I)^2$. Find projections $e,f,e',f'$ in $M_m(B/I)$ such that $h=[e]_0-[f]_0$ and $h'=[e']_0-[f']_0$. Suppose that $\sigma(h,h')=0$, i.e., $([e]_0-[f]_0)-a.([e]_0-[f]_0)+([e']_0-[f']_0)-b.([e']_0-[f']_0)=0$. Let us now use that $\K_0(B/I) \cong \Cy(\widehat{B/I}, \Z)$ by the isomorphism $\dim([e]_0)(x)=\textrm{Tr}(e(x))$, where Tr is the non-normalized trace map, to obtain that
$$p+q=a.p+b.q,$$
with $p=\textrm{Tr}(e)-\textrm{Tr}(f)$ and $q=\textrm{Tr}(e')-\textrm{Tr}(f')$.
Let $r,r'\in B/ I$ denote the two non-trivial projections from Lemma~\ref{lem16} fulfilling that $a.r=r$, $b.r'=r'$. Using Lemma~\ref{lem16} we have that $p=n1+ mr$ and $q=k1+ lr'$ for some $n,m,k,l\in \Z$. We conclude that $(h,h')=(n[1]_0+m[r]_0,k[1]_0+l[r']_0)$. We now have
\begin{align*}
	\K_1(B/I\rtimes_r G) & \cong \{(h,h')\in \K_0(B/I)^2\colon \sigma(h,h')=0 \}\\
	& =\{(n[1]_0+m[r]_0,k[1]_0+l[r']_0)\colon  n,m,k,l\in \Z\} \cong  \Z^4.
\end{align*}	
\end{proof}

\emph{Proofs of Theorem~\ref{mainth0}, Theorem~\ref{mainth1}, and Theorem~\ref{mainth2}:} By Proposition~\ref{prop21} the results stated in Theorem~\ref{mainth0}, Theorem~\ref{mainth1} and Theorem~\ref{mainth2} are an immediate consequence of Lemmas~\ref{lem-1}--\ref{lem8} and, Lemmas~\ref{lem11}--\ref{lem17} (cf.~\cite{Sier:Ror}). Both crossed products are the \Cs{}s of amenable groupoids, and such algebras have been proved to belong to the UCT class by Tu in \cite[Proposition 10.7]{Tu}.
\qed

\section{The $\K$-theory of the Roe algebra}
Let $G$ be a countable group and let $\elll^\infty(G)\rtimes_r G$ ($\cong \Cy(\beta G) \rtimes_r G$) denote the corresponding \emph{Roe algebra} crossed product\cp~\cite[p.\  152]{Hig:Roe}. This algebra encodes many crucial properties of the group. It is well known that the Roe algebra $\elll^\infty(G)\rtimes_r G$ is nuclear precisely when $G$ is exact \cite{Del1, Nar:Oza}. The crossed product is properly infinite if, and only if, $G$ is non-amenable\cp~\cite[Theorem 2.5.1]{Sier:phd}. Moreover, if a subset $E$ of $G$ is $G$-paradoxical, then $1_E \in \elll^\infty(G)$ is properly infinite $\elll^\infty(G)\rtimes_r G$. Remarkably, in \cite[Proposition 5.5]{Sier:Ror}, the converse was shown to be true. This observation leads to an entirely new way to tackle open problems regarding the Roe algebra. In particular we show that it is possible to determine part of the structure of $\K_0(\elll^\infty(G)\rtimes_r G)$ when $G$ is non-amenable. Our result is:
\begin{thrm}\label{mainth4}
Let $G$ be a non-amenable discrete group. Then every projection in $\elll^\infty(G)$ belongs to the trivial $K_0$-class of $K_0(\elll^\infty(G)\rtimes_r G)$. 

In particular, if $G$ is the free group on $1<n<\infty$ generators then $K_0(\elll^\infty(G)\rtimes_r G)=0$.
\end{thrm}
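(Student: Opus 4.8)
The plan is to show that the homomorphism $\iota\colon \K_0(\elll^\infty(G))\to \K_0(B)$ induced by the inclusion $\elll^\infty(G)\hookrightarrow B:=\elll^\infty(G)\rtimes_r G$ annihilates the class of every projection $1_E$, $E\subseteq G$. Since $\elll^\infty(G)=\Cy(\beta G)$ has real rank zero, $\K_0(\elll^\infty(G))\cong \Cy(\beta G,\Z)$ is generated by the classes $[1_E]_0$ (the clopen subsets of $\beta G$ being exactly the subsets of $G$), so this suffices for the first assertion. Throughout I would record two elementary properties of the assignment $E\mapsto \iota([1_E]_0)=:[1_E]_0^B$: it is finitely additive on disjoint sets, and it is translation invariant, since $u_t 1_E u_t^{*}=1_{tE}$ exhibits $1_{tE}\sim 1_E$ in $B$ and hence $[1_{tE}]_0^B=[1_E]_0^B$.

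The engine of the argument is that a $G$-paradoxical set has trivial class. Writing a paradoxical decomposition in partition form, $E=\bigsqcup_i A_i\sqcup\bigsqcup_j C_j$ with $\bigsqcup_i g_iA_i=E=\bigsqcup_j h_jC_j$, I would form the finite sums $v=\sum_i u_{g_i}1_{A_i}$ and $w=\sum_j u_{h_j}1_{C_j}$ in $B$. Disjointness of the families $\{g_iA_i\}$ and $\{h_jC_j\}$ makes the cross terms vanish, so $v$ and $w$ are partial isometries with $v^{*}v=1_{\sqcup_iA_i}$, $w^{*}w=1_{\sqcup_jC_j}$ and $vv^{*}=ww^{*}=1_E$. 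As $\sqcup_iA_i$ and $\sqcup_jC_j$ partition $E$, this gives $[1_E]_0^B=[1_{\sqcup_iA_i}]_0^B+[1_{\sqcup_jC_j}]_0^B=2[1_E]_0^B$, whence $[1_E]_0^B=0$. This is the $\K$-theoretic refinement of the fact, recorded before the statement, that $1_E$ is properly infinite exactly when $E$ is paradoxical \cite[Proposition 5.5]{Sier:Ror}. Applying it to $E=G$, which is paradoxical because $G$ is non-amenable (Tarski), yields $[1]_0^B=0$, and hence the complementation identity $[1_E]_0^B=-[1_{E^c}]_0^B$ for every $E$. In particular every finite set has trivial class, since its cofinite complement is paradoxical.

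The main obstacle is the passage to an arbitrary $E$ for which neither $E$ nor $E^c$ is paradoxical. Here I would argue by a Tarski-type dichotomy, exploiting that $E\mapsto[1_E]_0^B$ is a $G$-invariant finitely additive $\K_0(B)$-valued measure. The only obstruction to its being forced to vanish on $E$ is the existence of an honest $G$-invariant finitely additive real measure that is finite on $E$; a single such measure finite on both $E$ and $E^{c}$ would be finite on $G=E\sqcup E^{c}$, contradicting non-amenability, so the difficulty is confined to the case of two distinct invariant measures, each infinite on the other set. The plan is to dispose of this case by reducing the class $[1_E]_0^B$, via finite additivity and translation invariance, to that of a paradoxical set through an equidecomposability argument; making this reduction work uniformly rather than set by set is the delicate point, and it is where the full strength of the converse in \cite[Proposition 5.5]{Sier:Ror} enters.

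Finally, the ``in particular'' is a short corollary for $G=\mathbb{F}_n$. Since $\elll^\infty(G)=\Cy(\beta G)$ is totally disconnected, $\K_1(\elll^\infty(G))=0$, so the Pimsner--Voiculescu sequence used in Lemmas~\ref{lem6} and \ref{lem8},
$$\K_0(\elll^\infty(G))^n \xrightarrow{\ \sigma\ } \K_0(\elll^\infty(G)) \xrightarrow{\ \iota\ } \K_0(B) \longrightarrow \K_1(\elll^\infty(G))^n=0,$$
shows that $\iota$ is surjective. By the first part $\iota$ vanishes on the generators $[1_E]_0$ of $\K_0(\elll^\infty(G))$, hence $\iota=0$, and therefore $\K_0(B)=\textrm{im}(\iota)=0$.
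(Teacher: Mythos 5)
Your preparatory steps are exactly the right ones and coincide with the paper's: the halving construction $v=\sum_i u_{g_i}1_{A_i}$, $w=\sum_j u_{h_j}1_{C_j}$ showing that a $G$-paradoxical set has trivial class is precisely Lemma~\ref{men31} (via \cite[Proposition 4.3]{Sier:Ror}), the case $E=G$ gives $[1]_0=0$, and the complementation identity $[1_E]_0=-[1_{E^c}]_0$ is the pivot of the paper's argument as well. The closing Pimsner--Voiculescu step for the free group is also correct and is the paper's. But the central step --- an arbitrary $E$ for which neither $E$ nor $E^c$ is paradoxical --- is not actually proved: you describe a ``plan'' to run a Tarski-type dichotomy and an equidecomposability reduction, and you yourself flag that making it work is ``the delicate point.'' As it stands this is a gap, not a proof. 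Knowing that invariant finitely additive measures exist which are finite on $E$ and on $E^c$ separately gives no direct control on the class $[1_E]_0$ in $\K_0(\elll^\infty(G)\rtimes_r G)$, and no mechanism is given for converting that measure-theoretic information into the vanishing of a $\K_0$-class.

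The paper closes this gap with a short combinatorial device (Lemma~\ref{lem01}) that you are missing. Choose $t\neq e$ and a partition of the unit $1=e+f+h$ in $\elll^\infty(G)$ with each of $e,f,h$ orthogonal to its translate by $t$ (\cite[Corollary 6.2]{Sier:Ror}). Then $t.e\leq 1-e$, so the $G$-invariant ideal generated by $1-e$ contains $e$, i.e.\ $1-e$ is \emph{full}; likewise for $f$ and $h$. Given an arbitrary projection $p=1_E$, split it as $p=pe+pf+ph$. Each piece $pe$ satisfies $1-pe\geq 1-e$, so $1-pe$ is full, hence properly infinite by \cite[Corollary 5.6]{Sier:Ror} (this is where non-amenability enters), hence paradoxical by \cite[Proposition 5.5]{Sier:Ror}, hence of trivial class by your own halving argument. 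Your complementation identity then gives $[pe]_0=[1]_0-[1-pe]_0=0$, and summing the three pieces yields $[p]_0=0$. So the correct resolution is not a dichotomy on $E$ at all, but the observation that every set splits into three pieces each of whose \emph{complements} is paradoxical; with that single lemma inserted, the rest of your argument goes through essentially verbatim. (A minor further point: the paper also notes that for uncountable $G$ one must extend \cite[Proposition 5.5]{Sier:Ror} by replacing Dini's theorem for sequences with the version for nets.)
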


\begin{question}
Is $\K_0(\elll^\infty(G)\rtimes_r G)=0$ for every non-amenable group $G$?
\end{question}

Before we give a proof of Theorem~\ref{mainth4} let us establish some preliminary results.

\begin{lem}\label{lem01}
Let $G$ be a discrete group. Every projection in $\elll^\infty(G)$ is a sum of three projections having a complement that is full (i.e., not contained in a proper $G$-invariant closed two-sided ideal in $\elll^\infty(G) \rtimes_r G$).
\end{lem}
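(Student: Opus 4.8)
The plan is to exhibit, for an arbitrary projection $p = 1_E \in \elll^\infty(G)$ with $E \subseteq G$, an explicit decomposition $E = E_1 \sqcup E_2 \sqcup E_3$ into three disjoint pieces so that each $1_{E_i}$ has complement $1_{G \setminus E_i}$ which is full in $\elll^\infty(G) \rtimes_r G$. First I would recall, from step (iii) of the Introduction and \cite[Corollary 5.6]{Sier:Ror}, that a projection in $\elll^\infty(G)$ is full precisely when it is not contained in a proper $G$-invariant closed two-sided ideal; by \cite[Proposition 5.5]{Sier:Ror} and the discussion preceding the theorem, the relevant criterion is a \emph{non-paradoxicality/size} condition on the support set. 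So the real content is to split $E$ so that each complement $G \setminus E_i$ is ``large enough'' in $G$ to generate the whole algebra.

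The key step is the combinatorial splitting. I would choose the $E_i$ so that each $G \setminus E_i$ contains a translate of a set that is known to be full. Since fullness of $1_F$ is governed by whether $F$ meets every orbit-closure in $\beta G$ (equivalently, whether the smallest $G$-invariant ideal containing $1_F$ is everything), the natural strategy is to arrange that each $G \setminus E_i$ has nonempty intersection with $t.F_0$ for a fixed full set $F_0$ and all $t$, or more simply that each complement is cofinite-in-orbit in a suitable sense. Concretely, I would partition $E$ using a fixed free generator: writing $G$ as the free group and using the decomposition $1 = \sum_{|t|=n} 1_{\cB(t)} + \sum_{|t|<n} 1_{\{t\}}$ from Lemma~\ref{lem-1}, I can distribute the points of $E$ among three classes indexed by which generator-cylinder they first enter, ensuring that each class omits a full cylinder set and hence each complement contains that full cylinder. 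Because a cylinder $\cB(s)$ is full (its $G$-translates cover $\beta G \setminus G$), the complement $G \setminus E_i \supseteq \cB(s_i)$ is then full as well.

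The technical heart—and the step I expect to be the main obstacle—is verifying that the complements are genuinely full in the crossed product, not merely ``large'' in a naive counting sense; fullness is an ideal-theoretic condition in $\elll^\infty(G) \rtimes_r G$ and must be checked via the characterization of $G$-invariant ideals. I would reduce this to showing that each $G \setminus E_i$ is not contained in the support of any proper $G$-invariant ideal, which by \cite[Proposition 5.5]{Sier:Ror} amounts to a paradoxicality-type argument: a set whose complement is contained in a proper invariant ideal would have to be co-small, contradicting the explicit cylinder inclusion $G \setminus E_i \supseteq \cB(s_i)$ together with the fact that $\cB(s_i)$ generates everything under the $G$-action. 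The only delicate point is ensuring the three-fold split can always be achieved \emph{simultaneously}—i.e.\ that one can peel off three disjoint full-complement pieces from a single $E$—which is where the free-group structure (with at least two generators, giving plenty of room among the four generator-cylinders) is used decisively.
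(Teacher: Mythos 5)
Your construction is tied to the free group: you partition $E$ using generator cylinders $\cB(s)$ and you say yourself that the free-group structure ``is used decisively.'' But Lemma~\ref{lem01} is stated for an \emph{arbitrary} discrete group $G$ --- it sits in Section~4, where $G$ is general, and it feeds into Lemma~\ref{lem04} and the first assertion of Theorem~\ref{mainth4}, which concern every non-amenable discrete group. For a general $G$ there are no cylinder sets and no decomposition $1=\sum_{|t|=n}1_{\cB(t)}+\sum_{|t|<n}1_{\{t\}}$, so the proposal does not prove the statement in the generality required; this is a genuine gap, not a presentational one. The paper's proof instead uses the general combinatorial fact \cite[Corollary 6.2]{Sier:Ror}: for any $t\neq e$ there is a partition $1=e+f+h$ in $\elll^\infty(G)$ with each piece orthogonal to its $t$-translate. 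Then $t.e\le f+h=1-e$ shows that the closed two-sided ideal of $\elll^\infty(G)\rtimes_r G$ generated by $1-e$ contains $t.e$, hence $e$ (conjugate by $u_t$), hence $1$; so $1-e$ is full. For an arbitrary projection $p$ one then takes $p_e=pe$, $p_f=pf$, $p_h=ph$ and observes $1-e\le 1-p_e$, so $1-p_e$ is again full.

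Two further points. First, restricted to the free group your idea is correct and even gives an explicit splitting: take $E_1=E\cap(\cB(a)\cup\{e\})$, $E_2=E\cap\cB(b)$, $E_3=E\cap(\cB(a^{-1})\cup\cB(b^{-1}))$; each complement contains a full cylinder set. Second, your justification of fullness routes through \cite[Proposition 5.5]{Sier:Ror} and ``paradoxicality,'' but that proposition characterizes \emph{proper infiniteness} of $1_E$, not fullness. Fullness of $1_F$ is the far more elementary condition that finitely many $G$-translates of $F$ cover $G$ (equivalently, the $G$-invariant ideal of $\elll^\infty(G)$ generated by $1_F$ is everything); no paradoxicality and no non-amenability enter at this stage, which is exactly why the lemma holds for every discrete group.
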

\begin{proof}
Find a partition $e, f, h\in \elll^\infty(G)$ of the unit $1$ and $t\in G$ such that each of these projections is orthogonal to its translate by $t$; see \cite[Corollary 6.2]{Sier:Ror}. Using the inequality $t.e\leq f+h=1 -e$ we obtain that the closed two-sided ideal in $\elll^\infty(G)\rtimes_r G$ generated by $1 -e$ contains $t.e$ (and $e$). Hence $1 -e$ is a full projection in $\elll^\infty(G)\rtimes_r G$.

Now fix a projection $p\in \elll^\infty(G)$. Partition $p$ into subprojections $p_e, p_f, p_h$ below $e, f, h$. Since $1-e$ is full in $\elll^\infty(G)\rtimes_r G$ and $1-e\leq 1-p_e$ we also have that $1-p_e$ is full. In particular, $p=p_e+p_f+p_h$ is a sum of three projections with full complements, $1-p_e,1-p_f,1-p_h$.
\end{proof}

\begin{lem}\label{lem03}
Let $G$ be a non-amenable discrete group. Then every projection in $\elll^\infty(G)$ that is full in ${\elll^\infty(G)\rtimes_r G}$ belongs to the trivial $K_0$-class.
\end{lem}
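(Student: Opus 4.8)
The plan is to realize $p$ as the indicator function $1_E$ of a subset $E\subseteq G$, to use fullness together with non-amenability to conclude that $E$ is $G$-paradoxical, and then to upgrade the paradoxical decomposition to an exact partition of $E$ by a Banach--Schr\"oder--Bernstein argument, which makes a $\K_0$-swindle available.

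First, writing $p=1_E$ with $E\subseteq G$, I would invoke \cite[Corollary 5.6]{Sier:Ror} (quoted as step (iii) of the Introduction): since $p$ is full in $\elll^\infty(G)\rtimes_r G$ and $G$ is non-amenable, $p$ is properly infinite. By \cite[Proposition 5.5]{Sier:Ror}, a projection $1_E$ is properly infinite in $\elll^\infty(G)\rtimes_r G$ precisely when $E$ is $G$-paradoxical; hence $E$ is $G$-paradoxical. Thus there are finitely many pairwise disjoint subsets $P,Q\subseteq E$ with $P\sim_G E$ and $Q\sim_G E$, where $\sim_G$ denotes equidecomposability (that is, $E=\bigsqcup_k F_k$ and $P=\bigsqcup_k g_kF_k$ for finitely many $F_k\subseteq G$ and $g_k\in G$, and likewise for $Q$).

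Next I would upgrade this to a genuine partition. Since $E\sim_G P\subseteq P\sqcup Q\subseteq E$, the Banach--Schr\"oder--Bernstein theorem for equidecomposability gives $E\sim_G P\sqcup Q$. Refining the witnessing partition of $E$ so that each piece is carried entirely into $P$ or entirely into $Q$ (intersect each $F_k$ with the preimages of $P$ and of $Q$), I obtain a partition $E=E'\sqcup E''$ with $E'\sim_G P\sim_G E$ and $E''\sim_G Q\sim_G E$. As paradoxicality uses only finitely many pieces, all of these equidecompositions are finite. Each finite equidecomposition, say $E'\sim_G E$ via $E'=\bigsqcup_k F_k$, $E=\bigsqcup_k g_kF_k$, yields a partial isometry $v=\sum_k u_{g_k}1_{F_k}\in\elll^\infty(G)\rtimes_r G$ with $v^*v=1_{E'}$ and $vv^*=1_E$ (the cross terms vanish because the $F_k$, and their images $g_kF_k$, are pairwise disjoint), so that $1_{E'}\sim 1_E$; similarly $1_{E''}\sim 1_E$. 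Since $1_{E'}$ and $1_{E''}$ are orthogonal projections in $\elll^\infty(G)$ with $1_{E'}+1_{E''}=1_E$, I conclude in $\K_0$ that
\[
[p]_0=[1_{E'}]_0+[1_{E''}]_0=[1_E]_0+[1_E]_0=2[p]_0,
\]
whence $[p]_0=0$.

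The main obstacle is the passage from paradoxicality to an \emph{exact} partition. Bare proper infiniteness of $p$ only provides orthogonal subprojections $q_1,q_2\le p$ with $q_1\sim q_2\sim p$, giving $[p]_0=2[p]_0+[p-q_1-q_2]_0$ and hence merely $2[p]_0\le[p]_0$, not $[p]_0=0$ (indeed $[1]_0$ can be a nonzero generator, as in $\cO_\infty$). It is exactly the Banach--Schr\"oder--Bernstein refinement---available because we work with equidecomposability of subsets of $G$ rather than with abstract Cuntz subequivalence---that forces the complementary set $E\setminus(E'\sqcup E'')$ to be empty, so the swindle closes.
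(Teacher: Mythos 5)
Your argument is correct and follows essentially the same route as the paper: fullness plus non-amenability gives proper infiniteness of $p=1_E$ via \cite[Corollary 5.6]{Sier:Ror}, \cite[Proposition 5.5]{Sier:Ror} converts this to $G$-paradoxicality of $E$, and the Banach--Schr\"oder--Bernstein refinement produces an exact partition $E=E'\sqcup E''$ with both pieces equidecomposable with $E$, whence $[p]_0=2[p]_0=0$ (the paper packages exactly this refinement and the resulting partial isometries as Lemma~\ref{men31}). The only point you pass over is that the cited results of \cite{Sier:Ror} are stated for countable $G$, whereas the lemma allows an arbitrary discrete group; the paper notes that the extension is routine (replace Dini's theorem for increasing sequences by its version for increasing nets in the proof of \cite[Proposition 5.5]{Sier:Ror}).
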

\begin{proof}
Assume first that $G$ is countable. Fix any projection $p\in \elll^\infty(G)$ that is full in ${\elll^\infty(G)\rtimes_r G}$. By \cite[Corollary 5.6]{Sier:Ror} the projection $p$ is properly infinite in ${\elll^\infty(G)\rtimes_r G}$. By Lemma~\ref{men31}, below, there exist projections $e,f\in \elll^\infty(G)$ that are equivalent to $p$ and add up to $p$. Hence $[p]_0=0$.
	
Now let $G$ be arbitrary (non-amenable). It suffices to extend \cite[Corollary 5.6]{Sier:Ror}---or, rather, \cite[Proposition 5.5]{Sier:Ror}---to this case. In order to do this, it turns out to be sufficient to replace the statement in \cite{Sier:Ror} of Dini's theorem for increasing sequences by the statement of it for arbitrary increasing nets (for which the proof is the same).
\end{proof}

\begin{lem}\label{lem04}
Let $G$ be a non-amenable discrete group. Then every projection in $\elll^\infty(G)$ belongs to the trivial $K_0$-class of $K_0(\elll^\infty(G)\rtimes_r G)$. 
\end{lem}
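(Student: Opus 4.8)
The plan is to reduce this statement to the two preliminary lemmas by exploiting the additivity of the $K_0$-class on orthogonal projections, together with the full decomposition supplied by Lemma~\ref{lem01}. The essential preliminary observation is that the unit itself is a full projection: the closed two-sided ideal in $\elll^\infty(G)\rtimes_r G$ generated by $1$ is the whole algebra, which is not proper. Hence Lemma~\ref{lem03} applies to $1$ and yields $[1]_0 = 0$ in $K_0(\elll^\infty(G)\rtimes_r G)$. This is the step that makes complements usable, and it is really the crux of the argument; once it is in hand the rest is bookkeeping.

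With $[1]_0 = 0$ established, I would fix an arbitrary projection $p \in \elll^\infty(G)$ and apply Lemma~\ref{lem01} to write
$$p = p_e + p_f + p_h,$$
where $p_e, p_f, p_h$ are pairwise orthogonal (being subprojections below the orthogonal projections $e, f, h$ of a partition of $1$) and each complement $1-p_e$, $1-p_f$, $1-p_h$ is full in $\elll^\infty(G)\rtimes_r G$. By Lemma~\ref{lem03} each of these full complements has trivial $K_0$-class, so $[1-p_j]_0 = 0$ for $j \in \{e,f,h\}$. Combining this with the identity $[1]_0 = [p_j]_0 + [1-p_j]_0$ and the fact that $[1]_0 = 0$, I obtain $[p_j]_0 = 0$ for each $j$.

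Finally, since $p_e, p_f, p_h$ are mutually orthogonal, additivity of the $K_0$-class on orthogonal projections gives
$$[p]_0 = [p_e]_0 + [p_f]_0 + [p_h]_0 = 0,$$
as required. I do not anticipate any serious obstacle here: the only genuinely load-bearing point is the triviality of $[1]_0$, and that follows immediately from Lemma~\ref{lem03} applied to the (trivially full) unit. The passage from full projections to their complements, and from the three summands back to $p$, is routine once fullness of $1$ is noted.
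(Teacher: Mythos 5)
Your proof is correct and is essentially identical to the paper's: both apply Lemma~\ref{lem01} to decompose $p=p_e+p_f+p_h$ with full complements, use Lemma~\ref{lem03} to get $[1]_0=0$ and $[1-p_j]_0=0$, and conclude $[p_j]_0=0$ by additivity. Your explicit remark that the fullness of $1$ is the load-bearing step is a fair gloss on what the paper leaves implicit.
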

\begin{proof}
Let $p$ be a projection in $\elll^\infty(G)$. Using Lemma~\ref{lem01} find projections $p_e, p_f, p_h$ that add up to $p$ and have a full complement. Using Lemma~\ref{lem03} we have that
$$[1]_0 = 0,\ \ [1-p_e]_0 = 0,\ \ [1-p_f]_0 = 0,\ \ [1-p_h]_0 = 0.$$
We conclude that $[p]_0 = 0$.
\end{proof}

\emph{Proof of Theorem~\ref{mainth4}:}
Let $G$ denote the free group on $n$ generators. Consider the Pimsner-Voiculescu six-term exact sequence for the action of $G$ on $\elll^\infty(G)$:
\begin{align*}
\xymatrix{\K_0(\elll^\infty(G))^n  \ar[r]  & \K_0(\elll^\infty(G))  \ar[r]^-{{\bold 0}} & \K_0(\elll^\infty(G)\rtimes_r G) \ar[d]^-{{\bold 0}}\\
\K_1(\elll^\infty(G)\rtimes_r G) \ar[u]  & \K_1(\elll^\infty(G)) \ar[l] & \K_1(\elll^\infty(G))^n \ar[l]}
\end{align*}
Using that $\K_1(\cM)=0$ for any von Neumann algebra together with Lemma~\ref{lem04} we have two zero maps, as indicated above. Here we also use that $p\sim p_{11}\oplus \dots \oplus p_{nn}$ for a projection $p\in M_n(\elll^\infty(G))$\cp~proof of Lemma~\ref{lem6}. Using exactness we obtain that $$0= \K_1(\elll^\infty(G))^n = \textrm{im} ({\bold 0}) = \textrm{ker} ({\bold 0}) = \K_0(\elll^\infty(G)\rtimes_r G).$$
\qed

\section{Refining the construction of \cite{Sier:Ror}}\label{sec5}
It was shown in \cite[Theorem 6.11]{Sier:Ror} that every discrete countable non-amenable exact group admits a (free, amenable, minimal) action on the Cantor set such that the corresponding crossed product \Cs{} is a Kirchberg algebra in the $\UCT$ class. We have the following strengthened version of \cite[Theorem 6.11]{Sier:Ror}.

\begin{thrm}
\label{mainth3}
Let $G$ be a countable discrete group. Then $G$ admits a free, amenable, minimal action on the Cantor set $X$ such that $C(X)\rtimes_r G$ is a Kirchberg algebra in the UCT class if, and only if, $G$ is exact and non-amenable.

The action may be chosen in such a way that $[p]_0 = 0$ in $K_0(C(X)\rtimes_r G)$ for every projection $p$ in $C(X)$. In particular, if $K_0(C(X))\to K_0(C(X)\rtimes_r G)$ is surjective then $K_0(C(X)\rtimes_r G)=0$.
\end{thrm}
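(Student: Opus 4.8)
The plan is to prove Theorem~\ref{mainth3} by strengthening the construction of \cite[Theorem 6.11]{Sier:Ror} so that the resulting Cantor-set action has the property that every projection in $C(X)$ has trivial $K_0$-class in the crossed product. The biconditional itself is already \cite[Theorem 6.11]{Sier:Ror}, so the real content is the second paragraph, and the final sentence (surjectivity of $K_0(C(X))\to K_0(C(X)\rtimes_r G)$ forcing $K_0(C(X)\rtimes_r G)=0$) is then immediate: if the inclusion-induced map is onto and every generator $[p]_0$ dies, the whole group is zero.

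First I would recall from the excerpt's discussion of the \cite{Sier:Ror} construction that the Cantor-set action arises as a quotient $C(X) = A_0/J$, where $A_0 = \cstar(S)$ is generated by a carefully chosen countable $G$-invariant family $S$ of projections in $\elll^\infty(G) \cong C(\beta G)$, and $J$ is a maximal proper $G$-invariant closed two-sided ideal. The key observation, borrowed from Section~4, is that in the \emph{ambient} Roe algebra $\elll^\infty(G)\rtimes_r G$ every projection in $\elll^\infty(G)$ already has trivial $K_0$-class whenever $G$ is non-amenable; this is precisely Lemma~\ref{lem04} (equivalently Theorem~\ref{mainth4}). The strategy is therefore to arrange that the triviality witnessed at the level of the Roe algebra descends to the subquotient $C(X)\rtimes_r G$.

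The mechanism for this descent is the one already exhibited in Lemma~\ref{lem14} and Lemma~\ref{lem03}: a projection $p$ has $[p]_0=0$ as soon as one can write $p$ as a sum of two projections each equivalent to $p$ inside the crossed product, or more generally express a suitable Murray--von Neumann relation using partial isometries of the form $u_t c$ that lie in the subquotient. The concrete plan is to augment the generating family $S$ used in \cite{Sier:Ror}: in addition to the projections forced in by steps (i)--(iii) of the construction (freeness, amenability, proper infiniteness), I would throw in, for each projection $p$ already scheduled to appear, finitely many further projections $e,f \in \elll^\infty(G)$ witnessing $p = e + f$ with $e \sim p \sim f$ in $\elll^\infty(G)\rtimes_r G$ (these exist by the proper infiniteness supplied by \cite[Corollary 5.6]{Sier:Ror} together with Lemma~\ref{men31}). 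Because one only adds countably many projections at each stage, a standard bookkeeping/diagonal argument keeps the generating set countable and $G$-invariant, so the construction still terminates in a Cantor-set action with all the desired dynamical properties (free, amenable, minimal, non-zero projections infinite), hence still yields a Kirchberg algebra in the UCT class by Proposition~\ref{prop21} and \cite[Proposition 10.7]{Tu}.

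The main obstacle I anticipate is verifying that the equivalences $e\sim p\sim f$, which initially hold only in the large Roe algebra $\elll^\infty(G)\rtimes_r G$, still hold in the smaller subquotient $C(X)\rtimes_r G = (A_0/J)\rtimes_r G$. The witnessing partial isometries have the form $u_t c$ with $c\in\elll^\infty(G)$; to make these live in the subquotient one must ensure that the relevant $c$ belong to $A_0$ and that passing to the quotient by $J$ does not collapse the relation trivially or spoil the non-degeneracy. This is exactly the kind of "density/selection" difficulty flagged in the introduction as what makes the $K$-theory hard, and it is handled in the same spirit as step (iii): by a density argument one arranges at each stage that the finitely many needed group elements $t$ and coefficient projections $c$ are included in the generating family, so that the Murray--von Neumann equivalences are realized by elements of $C_c(G, A_0/J) \subseteq C(X)\rtimes_r G$. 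Once that inclusion is secured, $[p]_0 = [e]_0+[f]_0 = 2[p]_0$ forces $[p]_0=0$ for each generating $p$, and since every projection in $C(X)$ is, up to the reductions used in Lemma~\ref{lem1} and Lemma~\ref{lem11}, a finite sum of such generators, additivity of $[\,\cdot\,]_0$ yields $[p]_0=0$ for all projections $p\in C(X)$, completing the proof.
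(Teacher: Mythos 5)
Your proposal is correct and follows essentially the same route as the paper: the paper formalizes your ``throw in the witnesses'' step as Lemma~\ref{men31} (which produces the partial isometries $v,w\in C_c(G,\mathcal P)$ realizing $p=vv^*=ww^*=v^*v+w^*w$ from $G$-paradoxicality and the Banach--Schr\"oder--Bernstein theorem) and as Proposition~\ref{Roe-2} (the countable bookkeeping that closes the generating family under these witnesses), and then runs the proof of \cite[Theorem 6.11]{Sier:Ror} verbatim. The only stylistic difference is that the descent to the quotient, which you treat as a potential obstacle needing a further density argument, is dispatched in the paper simply by noting that homomorphisms preserve Murray--von Neumann equivalence once the witnesses already lie in $A\rtimes_r G$.
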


Before we give a proof of Theorem~\ref{mainth3} let us establish some preliminary results. The core idea is contained in the following lemma:

\begin{lem} \label{men31}
Let $G$ be a discrete group and let $\cP$ denote the set of all projections in $\elll^\infty(G)$. Then a projection $p\in \cP$ is properly infinite in $\elll^\infty(G) \rtimes_r G$ if, and only if, there exist $v,w\in C_c(G,\cP)$ such that $$p=vv^*=ww^*=v^*v+w^*w, \ \ \ v^*v,w^*w \in \elll^\infty(G).$$
\end{lem}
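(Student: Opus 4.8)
The plan is to prove the two implications separately. The reverse implication is a short algebraic observation, while the forward implication will rest on the combinatorial characterization of proper infiniteness in the Roe algebra from \cite[Proposition 5.5]{Sier:Ror}, which (as already noted in the proof of Lemma~\ref{lem03}) extends to an arbitrary discrete group once Dini's theorem is invoked for nets rather than sequences.

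For the implication ``$\Leftarrow$'', I would assume $v,w\in C_c(G,\cP)$ satisfy the stated relations; one may take $p\neq 0$, since a properly infinite element is by definition non-zero. Setting $e=v^*v$ and $f=w^*w$, the hypotheses say that $e,f$ are projections in $\elll^\infty(G)$ with $e+f=p$, so that $e$ and $f$ are orthogonal subprojections of $p$, while $vv^*=ww^*=p$ exhibits $v$ and $w$ as partial isometries realizing $e\sim p$ and $f\sim p$. Since $e\perp f$, the element $\binom{e}{f}\in M_{2,1}(\elll^\infty(G)\rtimes_r G)$ implements $e\oplus f\sim e+f=p$, while $\mathrm{diag}(v,w)$ implements $p\oplus p\sim e\oplus f$. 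Hence
$$p\oplus p\sim e\oplus f\sim p\leq p,$$
so $p\oplus p\precsim p$ and $p$ is properly infinite.

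For the implication ``$\Rightarrow$'', write $p=1_F$ with $F\subseteq G$. By \cite[Proposition 5.5]{Sier:Ror}, proper infiniteness of $1_F$ in $\elll^\infty(G)\rtimes_r G$ is equivalent to $F$ being $G$-paradoxical. Using the Banach--Schr\"oder--Bernstein theorem for equidecomposability, I would upgrade this to a paradoxical decomposition whose pieces exactly partition $F$: finitely many pairwise disjoint sets $A_1,\dots,A_n,B_1,\dots,B_m$ with $F=\bigsqcup_i A_i\sqcup\bigsqcup_j B_j$, together with elements $g_i,h_j\in G$ satisfying $F=\bigsqcup_i g_iA_i=\bigsqcup_j h_jB_j$. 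Then I would set
$$v=\sum_{i=1}^n 1_{g_iA_i}u_{g_i}, \qquad w=\sum_{j=1}^m 1_{h_jB_j}u_{h_j},$$
both of which lie in $C_c(G,\cP)$ (combining coefficients of coinciding group elements if necessary). Each summand $1_{g_iA_i}u_{g_i}$ is a partial isometry with range projection $1_{g_iA_i}$ and source projection $1_{A_i}$; the disjointness of the $A_i$ and of their translates $g_iA_i$ forces the cross terms in $vv^*$ and $v^*v$ to vanish, giving $vv^*=1_{\bigsqcup_i g_iA_i}=1_F=p$ and $v^*v=1_{\bigsqcup_i A_i}\in\elll^\infty(G)$, and similarly $ww^*=p$ and $w^*w=1_{\bigsqcup_j B_j}\in\elll^\infty(G)$. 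Finally $v^*v+w^*w=1_{\bigsqcup_i A_i}+1_{\bigsqcup_j B_j}=1_F=p$.

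The main obstacle is the forward direction, and within it the passage from an arbitrary paradoxical decomposition of $F$ to one whose pieces genuinely \emph{cover} $F$: without this refinement one obtains only $v^*v+w^*w\leq p$ rather than the required equality. The remaining work---verifying that $vv^*$, $ww^*$, $v^*v$, $w^*w$ are the asserted indicator projections---is the routine partial-isometry bookkeeping that the orthogonality of the translated pieces is precisely designed to control.
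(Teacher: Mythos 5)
Your proof is correct and takes essentially the same route as the paper's: it invokes \cite[Proposition 5.5]{Sier:Ror} to convert proper infiniteness into $G$-paradoxicality, uses the Banach--Schr\"oder--Bernstein theorem to refine the paradoxical decomposition into exact partitions, and then writes down the partial isometries explicitly (the paper obtains the same $v,w$ by referring to the proof of \cite[Proposition 4.3]{Sier:Ror}). The only difference is that you also spell out the easy converse and the cross-term bookkeeping, both of which the paper leaves implicit.
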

\begin{proof} 
Suppose that $p=1_E\in \cP$ is properly infinite in $\elll^\infty(G) \rtimes_r G$. By \cite[Proposition 5.5]{Sier:Ror} the set $E\subseteq G$ is $G$-paradoxical, i.e  there exist non-empty sets $V_1,V_2,\dots$, $V_{n+m}$ of $G$ and elements $t_1,t_2,\dots,t_{n+m}$ in $G$ such that $\bigcup_{i=1}^nV_i  = \bigcup_{i=n+1}^{n+m}V_i  = E$ and such that $\big(t_k. V_k\big)_{k=1}^{n+m}$ are pairwise disjoint subsets of $E$\cp~\cite[Definition 1.1]{Wag:B-T}. Using the Banach-Schr\"oder-Bernstein Theorem (cf.~\cite[Theorem 3.5]{Wag:B-T}) one may moreover assume that $\big(V_k\big)_{k=1}^{n}$ are pairwise disjoint, that $\big(V_{n+k}\big)_{k=1}^{m}$ are pairwise disjoint, and that $\bigcup_{i=1}^{n+m} t_i.V_i=E$. Following the proof of \cite[Proposition 4.3]{Sier:Ror}, construct the desired partial isometries $v,w\in \Cy_c(G,\cP)$ such that $p=vv^*=ww^*=v^*v+w^*w$ and $v^*v,w^*w \in \elll^\infty(G)$.
\end{proof}

Using Lemma~\ref{men31} we obtain a strengthened version of \cite[Proposition 6.8]{Sier:Ror} as follows:

\begin{prop} \label{Roe-2}
Let $G$ be a countable discrete group, and let $N$ be a countable subset of $\elll^\infty(G)$. Then there exists a separable $G$-invariant sub-\Cs{} $A$ of $\elll^\infty(G)$ which is generated by projections and contains $N$, with the following property: for every projection $p$ in $A$, if $p$ is properly infinite in $\elll^\infty(G) \rtimes_r G$, then $p$ is properly infinite in $A \rtimes_r G$.

One can refine the construction is such a way that each properly infinite projection $p$ is a sum of two projections in $A$ equivalent to $p$ in $A\rtimes_r G$.
\end{prop}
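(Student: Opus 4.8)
The plan is to construct $A$ as the closed union of an increasing chain $A_0\subseteq A_1\subseteq\cdots$ of separable $G$-invariant subalgebras, each generated by a countable $G$-invariant set of projections, using the strong form of Lemma~\ref{men31} as the engine that repairs proper infiniteness projection by projection. First I would fix the starting data. Since $\elll^\infty(G)\cong \Cy(\beta G)$ has real rank zero (the spectrum $\beta G$ being totally disconnected), each element of the countable set $N$ is a norm-limit of finite linear combinations of projections of $\elll^\infty(G)$; selecting countably many such projections per element of $N$, adjoining the unit $1$ and all $G$-translates, produces a countable $G$-invariant set of projections $Q_0$ with $N\subseteq\cstar(Q_0)=:A_0$. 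Thus $A_0$ is separable, unital, $G$-invariant, generated by projections, and contains $N$.

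The point that tames the quantifier ``for every projection $p$ in $A$'' is that an abelian \Cs{} generated by a countable set $Q$ of projections has only countably many projections. Such an algebra is a commutative $\AF$-algebra, in which projections are rigid: two projections lying at distance strictly less than $1$ must coincide, since on a totally disconnected spectrum $\|1_U-1_V\|\in\{0,1\}$. Approximating a given projection within $1/3$ by an element of the (dense) linear span of the Boolean algebra $\mathcal B(Q)$ generated by $Q$ and passing to a spectral projection therefore shows that the given projection already belongs to $\mathcal B(Q)$; and each element of $\mathcal B(Q)$ involves only finitely many members of $Q$. In particular the projections of $A_n=\cstar(Q_n)$ are exactly the countable set $\mathcal B(Q_n)$, which I may enumerate and service one at a time.

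The inductive step then reads: for each projection $p\in\mathcal B(Q_n)$ that is properly infinite in $\elll^\infty(G)\rtimes_r G$, Lemma~\ref{men31} supplies $v,w\in\Cy_c(G,\cP)$ with
$$p=vv^*=ww^*=v^*v+w^*w,\qquad v^*v,\,w^*w\in\elll^\infty(G),$$
and I adjoin to $Q_n$ the finitely many coefficient projections of $v$ and $w$, the diagonal terms $v^*v$ and $w^*w$, and all $G$-translates of these, obtaining a countable $G$-invariant $Q_{n+1}\supseteq Q_n$. Put $A=\overline{\bigcup_n\cstar(Q_n)}$. Given any projection $p$ of $A$ that is properly infinite in $\elll^\infty(G)\rtimes_r G$, the rigidity observation places $p$ in $\mathcal B(Q_n)$ for some $n$, so at stage $n$ the witnesses $v,w$ were adjoined with all their coefficients and diagonal terms in $Q_{n+1}\subseteq A$; hence $v,w\in\Cy_c(G,A)\subseteq A\rtimes_r G$ and the identities $p=vv^*=ww^*=v^*v+w^*w$ hold in $A\rtimes_r G$. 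These exhibit $v^*v$ and $w^*w$ as orthogonal projections in $A$, each equivalent to $p$ in $A\rtimes_r G$ and summing to $p$, so $p\oplus p\precsim p$ and $p$ is properly infinite in $A\rtimes_r G$. Because the splitting $v^*v+w^*w=p$ is an exact equality, this is simultaneously the refinement: $p$ is the sum of two projections in $A$ each equivalent to $p$ in $A\rtimes_r G$.

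The main obstacle---and the reason the strengthened Lemma~\ref{men31} is indispensable---is reconciling an apparently uncountable quantifier with a countable, self-contained construction. Rigidity of projections in a commutative $\AF$-algebra is exactly what collapses ``every projection of $A$'' to ``every projection occurring at some finite stage,'' so that the projection-by-projection repair closes up in a single inductive sweep rather than requiring a transfinite recursion; and it is the exact equality $v^*v+w^*w=p$, in place of a mere subequivalence $p\oplus p\precsim p$, that upgrades bare proper infiniteness to the refined ``sum of two equivalent projections'' conclusion. The remaining verifications are routine: that the coefficient projections and diagonal terms genuinely lie in $A$ (ensured by adjoining them and their translates) and that $G$-invariance and separability persist at every stage.
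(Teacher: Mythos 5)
Your proposal is correct and follows essentially the same route as the paper: an inductive construction of countable $G$-invariant sets of projections, the observation that a separable abelian \Cs{} generated by countably many projections has only countably many projections (so every projection of the limit algebra already appears at a finite stage), and Lemma~\ref{men31} supplying, for each properly infinite projection, partial isometries with finitely many coefficient projections to adjoin, whose exact splitting $p=v^*v+w^*w$ also yields the refined statement. The paper packages the bookkeeping via \cite[Lemma 6.7]{Sier:Ror} rather than invoking real rank zero directly, but the argument is the same.
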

\begin{proof} 
Let $\cP_\mathrm{inf}$ denote the set of properly infinite projections in $\elll^\infty(G) \rtimes_r G$. Use \cite[Lemma 6.7]{Sier:Ror} to find a countable $G$-invariant set of projections $P_0 \subseteq \elll^\infty(G)$ such that $N \subseteq \cstar(P_0)$. Let $Q_0$ denote the set of projections in $\cstar(P_0)$. The set $Q_0$ is countable because $\cstar(P_0)$ is separable and abelian\cp~proof of Lemma~\ref{lem1}. For each $p \in Q_0 \cap\cP_\mathrm{inf}$ use Lemma~\ref{men31} to find a countable subset $M(p)$ of $\elll^\infty(G)$ such that $p$ is properly infinite in $A\rtimes_r G$, and is in fact a sum of two projections in $A$ equivalent to $p$ in $A\rtimes_r G$, whenever $A$ is a $G$-invariant sub-\Cs{} of $\elll^\infty(G)$ that contains $\{p\} \cup M(p)$. Put 
$$N_1 = Q_0 \, \cup \, \bigcup_{p \in Q_0 \cap\cP_\mathrm{inf}} M(p).$$ 
Use Lemma \cite[Lemma 6.7]{Sier:Ror} to find a countable $G$-invariant set of projections $P_1 \subseteq \elll^\infty(G)$ such that $N_1 \subseteq \cstar(P_1)$. 

Continue in this way to find countable subsets $N_0=N, N_1,N_2, \dots$ of $\elll^\infty(G)$ and countable $G$-invariant subsets $P_0,P_1,P_2,\dots$ consisting of projections in $\elll^\infty(G)$ such that if $Q_j$ is the (countable) set of projections in $\cstar(P_j)$, then $Q_{j} \subseteq N_{j+1}\subseteq \cstar(P_{j+1})$ and every $p \in Q_{j} \cap \cP_\mathrm{inf}$ is properly infinite in $\cstar(P_{j+1})\rtimes_r G$, and is in fact a sum of two projections in $\cstar(P_{j+1})$ equivalent to $p$ in $\cstar(P_{j+1})\rtimes_r G$.

Put $P = \bigcup_{n=0}^\infty P_n$ and put $A = \cstar(P)$. Notice that
$$A = \overline{\bigcup_{n=1}^\infty \cstar(P_n)}.$$

Then $P$ is a countable $G$-invariant subset of $\elll^\infty(G)$ consisting of projections, $N \subseteq \cstar(P)$. Moreover, if $p$ is a projection in $A$ which is properly infinite in $\elll^\infty(G)\rtimes_r G$, then $p$ is equivalent (and hence equal to) a projection in $\cstar(P_n)$ for some $n$, whence $p$ belongs to $Q_n \cap\cP_{\mathrm{inf}}$, which by construction implies that $p$ is properly infinite in $\cstar(P_{n+1}) \rtimes_r G$ and hence also in $A \rtimes_r G$. 

Moreover, the projection $p$ is a sum of two projections $\cstar(P_{n+1})$ equivalent to $p$ in $\cstar(P_{n+1})\rtimes_r G$. Hence, $p$ is a sum of two projections in $A$ equivalent to $p$ in $A\rtimes_r G$.
\end{proof}

\emph{Proof of Theorem~\ref{mainth3}:} Following the proof of \cite[Theorem 6.11]{Sier:Ror} use Proposition~\ref{Roe-2} in stead of \cite[Proposition 6.8]{Sier:Ror} (together with the fact that homomorphisms preserve equivalence). This ensures that every projection $p$ in $\Cy(X)$ is a sum of two projections in $\Cy(X)$ equivalent to $p$ in $\Cy(X)\rtimes_r G$. In particular, $[p]_0=0$ in $\K_0(\Cy(X)\rtimes_r G)$.

Suppose that $\iota\colon \K_0(\Cy(X))\to \K_0(\Cy(X)\rtimes_r G)$ is surjective. Following the proof of Lemma~\ref{lem15} we conclude that $\K_0(\Cy(X)\rtimes_r G)=0$.
\qed

\begin{remark}
There is another way to strengthen \cite[Theorem 6.11]{Sier:Ror}. One can carry out the construction in such a way that $\K_0(\Cy(X)\rtimes_r G)=0$ provided that $G$ has the following two properties: First, $\K_0(\elll^\infty(G) \rtimes_r G)=0$ and, second, whenever $G$ acts on an abelian \Cs{} $A$ of real rank zero then the map $\K_0(A\rtimes_r G)\to \K_0(A/I\rtimes_r G)$ is surjective for every $G$-invariant closed two-sided ideal $I$ in $A$.

Recall that the crossed product $\Cy(X)\rtimes_r G$ in \cite[Theorem 6.11]{Sier:Ror} is constructed as the limit of a sequence of separable properly infinite algebras $A_i\rtimes_r G$, divided by an ideal $I\rtimes_r G$. The algebra $A=\underrightarrow{\lim} A_i$ is a unital abelian separable \Cs{} of real rank zero. Hence each $\K_0(A_i\rtimes_r G)$ arises from a countable set of projections in $Z_i\subseteq A_i\rtimes_r G$\cp~\cite{RorLarLau:k-theory}. We conclude that $\K_0(A\rtimes_r G)$ arises from the set of projections $\bigcup_{i=1}^\infty Z_i$\cp~\cite{RorLarLau:k-theory}. We therefore need to ensure that each $p\in Z_i$ has trivial $\K_0$-class.

Let $p$ be a projection in $\elll^\infty(G)\rtimes_r G$ such that $[p]_0=0$ in $\K_0(\elll^\infty(G)\rtimes_r G)$. Because $\Cy_c(G,\elll^\infty(G))$ is dense in $\elll^\infty(G)\rtimes_r G$ there exists a countable subset $N(p)$ of $\elll^\infty(G)$ such that whenever $A$ is a $G$-invariant sub-\Cs\ of $\elll^\infty(G)$ which contains $N(p)$, then $[p]_0=0$ in $\K_0(A\rtimes_r G)$\cp~\cite[Lemma 6.6]{Sier:Ror}. Again by a Blackadar-type argument, one can add these sets $N(p)$ to the inductive construction of the $A_i$'s in such a way that we obtain a new version of \cite[Proposition 6.8]{Sier:Ror} with the following additional statement: If $\K_0(\elll^\infty(G)\rtimes_r G)=0$ then $\K_0(A\rtimes_r G)=0$. Since $\K_0(\elll^\infty(G)\rtimes_r G)=0$ by assumption, we have that $\K_0(A\rtimes_r G)=0$, and hence again by hypothesis, $\K_0(A/I\rtimes_r G)=0$, i.e., $\K_0(\Cy(X)\rtimes_r G)=0$, as desired.
\begin{question}
Can we ensure $A\rtimes_r G$ is $K_0$-liftable\cp~\cite{PasRor}?
\end{question}
\end{remark}

\begin{cor}\label{cor1}
Let $G$ denote the free group on $n$ generators. Then $G$ admits a free, amenable, minimal action on the Cantor set $X$ such that $C(X)\rtimes_r G$ is a Kirchberg algebra in the UCT class and $K_0(C(X)\rtimes_r G)=0$.
\end{cor}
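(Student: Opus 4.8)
The plan is to derive Corollary~\ref{cor1} as a direct application of Theorem~\ref{mainth3}, exploiting the special structure of the free group. First I would invoke Theorem~\ref{mainth3}: since the free group $G$ on $n$ generators ($1<n<\infty$) is exact and non-amenable, that theorem already furnishes a free, amenable, minimal action on the Cantor set $X$ for which $\Cy(X)\rtimes_r G$ is a Kirchberg algebra in the $\UCT$ class, and moreover $[p]_0=0$ in $\K_0(\Cy(X)\rtimes_r G)$ for every projection $p$ in $\Cy(X)$. The content of the corollary beyond Theorem~\ref{mainth3} is the vanishing $\K_0(\Cy(X)\rtimes_r G)=0$, so the whole task reduces to verifying the surjectivity hypothesis in the second part of Theorem~\ref{mainth3}, namely that the inclusion-induced map $\K_0(\Cy(X))\to \K_0(\Cy(X)\rtimes_r G)$ is onto.

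The mechanism for this surjectivity is the Pimsner--Voiculescu exact sequence, exactly as in the proofs of Lemma~\ref{lem6} and Lemma~\ref{lem15}. For the free group on $n$ generators $t_1,\dots,t_n$, the six-term sequence reads
\begin{align*}
\xymatrix{\K_0(\Cy(X))^n  \ar[r]^-{\sigma}  & \K_0(\Cy(X))  \ar[r]^-{\iota} & \K_0(\Cy(X)\rtimes_r G) \ar[d]^-{{\bold 0}}\\
\K_1(\Cy(X)\rtimes_r G) \ar[u]  & \K_1(\Cy(X)) \ar[l] & \K_1(\Cy(X))^n, \ar[l]}
\end{align*}
where $\sigma=\sum_i(1-t_i)$ and the downward map out of $\K_0$ of the crossed product lands in $\K_1(\Cy(X))^n$. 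The key structural fact is that $\Cy(X)$ is an $\AF$-algebra by Lemma~\ref{lem-1}-type reasoning (an abelian \Cs{} of real rank zero generated by a sequence of projections), so $\K_1(\Cy(X))=0$. Exactness then forces the map $\K_0(\Cy(X)\rtimes_r G)\to\K_1(\Cy(X))^n=0$ to be the zero map, whence $\iota$ is surjective: its image equals the kernel of that zero map, which is all of $\K_0(\Cy(X)\rtimes_r G)$.

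With surjectivity of $\iota$ established, I would feed it into the final clause of Theorem~\ref{mainth3}, which asserts precisely that $\K_0(\Cy(X)\rtimes_r G)=0$ under this surjectivity hypothesis; alternatively one combines surjectivity of $\iota$ with $[p]_0=0$ for every projection $p$ in $\Cy(X)$ to conclude directly, since every generator of $\K_0(\Cy(X))$ is a class of (a sum of) such projections and these all map to $0$. I expect no genuine obstacle here: the only point requiring care is confirming that the Cantor system produced by Theorem~\ref{mainth3} does yield an $\AF$ coefficient algebra so that $\K_1(\Cy(X))$ vanishes, but this is automatic since $X$ is a Cantor set and $\Cy(X)$ for $X$ totally disconnected is $\AF$. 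The essentially free nature of the argument reflects that the vanishing of $\K_1(\Cy(X))$ for a free group coefficient algebra of real rank zero makes the connecting map trivial, so the corollary is genuinely a corollary rather than a fresh construction.
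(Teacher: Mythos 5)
Your proposal is correct and is essentially the paper's own (implicit) argument: apply Theorem~\ref{mainth3} to the free group, then use the Pimsner--Voiculescu sequence with $\K_1(\Cy(X))=0$ (as $\Cy(X)$ is AF for $X$ a Cantor set) to get surjectivity of $\K_0(\Cy(X))\to\K_0(\Cy(X)\rtimes_r G)$, and invoke the final clause of Theorem~\ref{mainth3}. The only cosmetic caveat is that the argument as written assumes $1<n<\infty$ so that the Pimsner--Voiculescu sequence has the stated finite form, matching the restriction in Theorem~\ref{mainth4}.
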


\proof[Acknowledgements]
We thank the members or the operator algebra seminar group at the Fields Institute, in particular Barry Rowe, for valuable discussions concerning the topics of this paper.

\providecommand{\bysame}{\leavevmode\hbox to3em{\hrulefill}\thinspace}
\providecommand{\MR}{\relax\ifhmode\unskip\space\fi MR }

\end{document}